\crefname{section}{section}{sections}
\crefname{subsection}{subsection}{subsections}
\Crefname{section}{Section}{Sections}
\Crefname{subsection}{Subsection}{Subsections}
\Crefname{figure}{Figure}{Figures}
\newtheorem{remark}{Remark}
\newtheorem{assumption}{Assumption}
\newcommand{\be}{\begin{equation}}
\newcommand{\ee}{\end{equation}}
\def\ew#1{{{\color{black}#1}}}
\def\fm#1{{{\color{black}#1}}}
\def\sm#1{{{\color{black}#1}}}
\def\ff#1{{{\color{black}#1}}}
\def\red#1{{{\color{black}#1}}}
\newtheorem{theorem}{Theorem}[section]
\newtheorem{lemma}[theorem]{Lemma}
\newcommand{\norm}[1]{\left|\left|#1\right|\right|}
\newlength{\smpagewidth}
\newlength{\smpageheight}
\title{FlexPD: A Flexible Framework of First-Order Primal-Dual Algorithms for Distributed Optimization$^*$
\thanks{$^*$ This work was supported by DARPA Lagrange HR-001117S0039} }
\author{Fatemeh Mansoori$^\dag$\thanks{$^\dag$Department of Electrical and Computer Engineering, Northwestern University, Email: ermin.wei@northwestern.edu} and Ermin Wei$^\dag$}
\date{}
\begin{document}
\maketitle
\begin{abstract}
 In this paper, we study the problem of minimizing a sum of convex objective functions, which are locally available to agents in a network. Distributed optimization algorithms make it possible for the agents to cooperatively solve the problem through local computations and communications with neighbors. Lagrangian-based distributed optimization algorithms have received significant attention in recent years, due to their exact convergence property.  However, many of these algorithms have slow convergence or are expensive to execute. In this paper, we develop a flexible framework of first-order primal-dual algorithms (FlexPD), which allows for multiple primal steps per iteration. \red{ This framework includes three algorithms, FlexPD-F, FlexPD-G, and FlexPD-C that can be used for various applications with different computation and communication limitations.} For strongly convex and Lipschitz gradient objective functions, we establish linear convergence of our proposed framework to the optimal solution. Simulation results confirm the superior performance of our framework compared to the existing methods. 
\end{abstract}
\section{Introduction}
In this paper we focus on solving the optimization problem
\be\min_{\tilde{x}\in\mathbb{R}} \sum_{i=1}^n f_i(\tilde{x})\label{mainproblem}\ee
over a network of $n$ agents (processors), which are connected with an undirected static graph $\mathcal{G(V,E)}$, where $\mathcal{V}$ and $\mathcal{E}$ denote the set of vertices and edges respectively. \footnote{\red{For representation simplicity, we consider the case with $\tilde{x}\in\mathbb{R}$. The analysis can be easily generalized to the multidimensional case. We will point out how to adapt our algorithm for $\tilde x$ in $\mathbb{R}^p$.}} Each agent $i$ in the network has access to a real-valued local objective function $f_i$, which is determined by its local data, and can only communicate with its neighbors defined by the graph. The problems of the above form arise in a wide range of applications such as sensor networks, robotics, wireless systems, and most recently in federated learning \cite{predd2007distributed, ren2007information, kekatos2013distributed, tsianos2012consensus, konevcny2016federated}. In these applications the datasets are either too large to be processed on a single processor or are collected in a distributed manner. Therefore, distributed optimization is essential to limit the data transmission, enable parallel processing, and enhance the privacy.  
A common technique in solving problem \cref{mainproblem} in a distributed way is to define local copies of the decision variable to agents. Each agent then works towards minimizing its local objective function while trying to make its decision variable equal to those of its neighbors. This can be formalized as    \be\label{eq:consensus} \min_{x}\sum_{i=1}^nf_i(x_i)\quad\mbox{s.t.}\quad x_i=x_j \quad\forall \quad (i,j)\in\mathcal{E},\ee
with $x=[x_1, x_2, ..., x_n]^\prime\in\mathbb{R}^n$. This problem is known as the \textit{consensus} optimization problem. 
\subsection{Related Work} Pioneered by the seminal works in \cite{bertsekas1989parallel,tsitsiklis1984problems}, a plethora of distributed optimization algorithms has been developed to solve problem \cref{eq:consensus}. One main category of distributed optimization algorithms to solve problem \cref{eq:consensus} is based on primal first-order (sub)gradient descent method. In particular, the authors in \cite{nedic2009distributed} presented a first-order primal iterative method, known as distributed (sub)-gradient descent (DGD), in which agents update their local estimate of the solution through a combination of a local gradient descent step and a consensus step (weighted average with neighbors variables). \red{Subsequent works have studied variants of this method with acceleration, in stochastic and asynchronous settings, and in networks with time varying graphs \cite{ram2010distributed, nedic2011asynchronous, nedic2015distributed, nedic2016stochastic, jakovetic2014fast, nesterov1983method}.}
A common property of these DGD-based algorithms along with the dual averaging algorithm in \cite{duchi2012dual} is that they can only converge to a neighborhood of the exact solution with a fixed stepsize. In order to converge to the exact solution, these algorithms need to use a diminishing stepsize, which results in a slow rate of convergence. 

On the other hand, Method of Multipliers (MM) enjoys exact convergence with constant stepsize. The method of multipliers involves a primal step, which optimizes an augmented Lagrangian function formed by adding a quadratic penalty term to the Lagrangian function, and a dual step, which takes a dual gradient ascent step \cite{hestenes1969multiplier, bertsekas2014constrained}. However, this algorithm might be extremely costly and inefficient since it requires the exact minimization at each iteration. As a remedy, \cite{jakovetic2015linear, uribe2018dual} proposed distributed MM methods with inexact minimization and provide convergence guarantees. A closely related family of methods are those based on Alternating Direction Method of Multipliers (ADMM) \cite{gabay1983chapter,eckstein2015understanding}, which also have exact convergence and enjoy  good numerical performance, \cite{boyd2011distributed, wei2012distributed, mota2013d, shi2014linear, iutzeler2016explicit, wei20131}. Instead of one minimization of the primal variables per iteration as in MM, ADMM partitions the primal variables into two sets and takes two minimization steps (one for each subset) per iteration. Versions of distributed ADMM with inexact minimization and exact convergence were developed in \cite{ling2015dlm, mokhtari2015decentralized, liu2018acceleration, mokhtari2016decentralized, yu2019communication}.

Recently, distributed gradient based methods with gradient tracking technique have been developed \cite{shi2015extra, qu2017harnessing, sun2019convergence, nedic2017achieving}. In addition to a consensus step on the iterates like in DGD, these methods also takes weighted average on the gradients. These methods are shown to converge with constant stepsize to the exact optimal solution linearly if the objective function is strongly convex and has Lipschitz gradient. Although these algorithms do not involve dual variables explicitly, they can be viewed as Arrow-Hurwicz-Uzawa primal-dual gradient methods \cite{arrow1958studies} for augmented Lagrangian, which replace the primal minimization problem in MM with a single primal gradient descent step. The gradient step can be carried out locally by an agent using one gradient evaluation and communication with neighbors. The algorithms in \cite{shi2015proximal,wu2018decentralized, mokhtari2016dsa, zeng2015extrapush, xi2017dextra} are proximal, asynchronous, and stochastic versions of these primal-dual algorithm for directed and undirected graphs.  


For all the distributed algorithms with exact convergence guarantees, they either suffer from computational complexity caused by the (approximate) minimization involved at each iteration, or have slower numerical performance.  We can view MM and Arrow-Hurwicz-Uzawa as two extremes, where the primal minimization step with respect to the augmented Lagrangian is either solved exactly or with only one gradient step, and aim to bridge the gap in between. Motivated by this observation, we propose a flexible family of primal-dual methods (FlexPD) that can take arbitrarily many ($T$) primal gradient steps of the augmented Lagrangian before taking a dual step and provide exact linear convergence guarantees for constant stepsize. This family of methods gives flexibility and controls over the trade-off between the complexity and performance. Our proposed method 
shares some similarities to a methods in \cite{uribe2018dual, liu2018acceleration}. Work \cite{uribe2018dual} aims to quantify the number of steps required to reach $\varepsilon$-neighborhood of the optimal solution instead of exact convergence. 
In \cite{liu2018acceleration}, the authors studied a different problem of the form $\min_x f(x)+g(Ax)$ and showed that a primal-dual hybrid gradient (PDHG) method, which combines a minimization in the primal space and a fixed number of proximal gradient steps in the dual space at each iteration, can guarantee global convergence. 

In addition to introducing the flexibility in choosing number of primal steps, our method also offers flexibility in reducing the amount of communication and/or gradient computation operations, which are two of the building blocks of distributed optimization methods. We develop two variants of FlexPD, where for the $T$ primal updates, outdated information regarding neighbors' values or local gradient is used instead of the current one. These variants also enjoy linear rate of convergence. To this end, our paper is related to the growing literature on communication aware distributed methods, including methods designed to limit communication by graph manipulation techniques~\cite{chow2016expander}, special communication protocols~\cite{lan_communication-efficient_2017, sahu_communication-efficient_2018,shen_towards_2018}, algorithmic design~\cite{federated5,shamir2014communication,zhang2012communication,zhang2015disco} or quantization/encoding schemes~\cite{alistarh2017qsgd,rabbat_quantized_2005,berahas2019nested} and methods that aim to balance communication and computation loads depending on application-specific requirements~\cite{jaggi_comm_eff_coord_ascent,tsianos2012communication,berahas2018balancing,nokleby_distributed_2017}. 


\subsection{Contributions} 
\ff{ We develop a \textbf{Flex}ible \textbf{P}rimal-\textbf{D}ual framework (FlexPD)  to bridge the gap between the method of multipliers and primal-dual methods. In the version called FlexPD-F algorithm, we replace the primal space minimization step in MM with a finite number ($T$) of \textbf{F}ull gradient descent steps. Each primal gradient descent step in FlexPD-F involves one gradient evaluation and one round of communication. Hence each iteration of the method consists of $T$ gradient and communication operations for $T$ primal updates followed by a dual gradient update. To address the scenarios where communication or computation is costly, we further develop  FlexPD-G and FlexPD-C variants, which utilizes outdated information. Each iteration of FlexPD-G involves $T$ \textbf{G}radient evaluations and one communication for $T$ primal updates and a dual gradient update, whereas FlexPD-C has $T$ rounds of \textbf{C}ommunication, one gradient evaluation for $T$ primal steps and one dual step per iteration. The framework is based a general form of augmented Lagrangian, which is flexible in the augmentation term. For our proposed framework, we establish the linear convergence of all three algorithms to the exact solution with constant stepsize  for strongly convex objective function with Lipschitz gradient. Also, due to our general form of the augmented Lagrangian, the algorithms presented in \cite{shi2015extra, nedic2017achieving, qu2017harnessing} can be considered as special cases of our general framework for particular choices of the augmentation term and with $T=1$.  Our numerical experiments demonstrate the improved performance of our algorithms over those with one gradient descent step in the primal space.} \red{Part of the results for FlexPD-C has appeared in our earlier work \cite{mansoori2019general}, this paper proposes two additional novel  methods, FlexPD-F, FlexPD-G. The three methods combined form a flexible framework for distributed first-order primal-dual methods. }

 The rest of this paper is organized as follows: \cref{sec:alg} describes the development of our general framework,  \cref{sec:conv} contains the convergence analysis,  \cref{sec:simul} presents the numerical experiments, and  \cref{sec:conclusions} contains the concluding remarks.

 \textbf{Notations:} A vector is viewed as a column vector. For a matrix $A$, we write $A_{ij}$ to denote the component of $i^{th}$ row and $j^{th}$ column. Notations $I$ and $\textbf{0}$ are used for the identity and zero matrix. \sm{We denote the largest and second smallest eigenvalues of a symmetric matrix $A$, by $\rho(A)$ and $s(A)$ respectively. Also, for a symmetric matrix $A$, $aI\preceq A\preceq bI$ means that the eigenvalues of $A$ lie in $[a,b]$ interval. For two symmetric matrices $A$ and $B$ we use $A\preceq B$ if and only if $B-A$ is positive semi-definite.} 
For a vector $x$, $x_i$
denotes the $i^{th}$ component of the vector.
We use $x'$ and $A'$ to
denote the transpose of a vector $x$ and a matrix $A$ respectively.
 We use standard Euclidean norm (i.e., 2-norm) unless otherwise noted, i.e., for a vector $x$ in $\mathbb{R}^n$, $\norm{x}=\left(\sum_{i=1}^n x_i^2\right)^{\frac{1}{2}}$. \ff{We use the weighted norm notation $\norm{x}_A$ to represent $x^\prime Ax$ for any positive semi-definite $A$.} \footnote{\red{In \cref{sec:conv}, we disregard the positive semi-definiteness requirement of the weight matrix when using this notation. Ultimately, all the weight matrices are shown to be positive definite.}} For a real-valued function $f:\mathbb{R}\rightarrow \mathbb{R}$, the gradient vector of $f$ at $x$  is denoted by $\nabla f(x)$ and the Hessian matrix is denoted by $\nabla^2f(x)$.
\section{Algorithm Development} \label{sec:alg} In this section, we derive the flexible framework of primal-dual algorithms that allows for multiple primal steps at each iteration. 
To develop our algorithm, we rewrite problem \cref{eq:consensus} in the following compact form 
\be\min_x f(x)\quad\mbox{s.t}\quad Ax=0, \label{eq:Newconsensus}\ee where $f(x)=\sum_{i=1}^n f_i(x_i)$ with $x=[x_1, x_2, ..., x_n]^\prime\in\mathbb{R}^n$, and $Ax=0$ represents all equality constraints. One example of the matrix  $A$ is the edge-node incidence matrix of the network graph \cite{bertsimas1997introduction}, i.e., $A\in\mathbb{R}^{\epsilon\times n}$, $\epsilon=\vert \mathcal{E}\vert$, whose null space is spanned by the vector of all ones.
\fm{Row $l$ of matrix $A$ corresponds to edge $l$, connecting vertices $i$ and $j$, and has $+1$ in column $i$ and $-1$ in column $j$ (or vice versa) and $0$ in all other columns.} 
We denote by $x^*=[\tilde{x}^*, \tilde{x}^*,...,\tilde{x}^*]'$ the minimizer of problem \cref{eq:Newconsensus}. To achieve exact convergence, we develop our framework based on the Lagrange multiplier methods. We form the following augmented Lagrangian \be L_a(x,\lambda)=f(x)+\lambda^\prime Ax+\frac{1}{2}x^\prime Bx, \label{auglag}\ee  
where $\lambda\in\mathbb{R}^\epsilon$ is the vector of Lagrange multipliers. Each dual variable $\lambda_l$ is associated with an edge $l=(i,j)$ and thus coupled between two agents and is updated by one of them. The set of dual variables that agent $i$ updates is denoted by $\Lambda_i$. 

We adopt the following assumptions on our problem.
\begin{assumption} \label{funcprop}
The local objective functions $f_i(x)$ are $m-$ strongly convex, twice differentiable, and $L-$ Lipschitz gradient, i.e., \[mI\preceq \nabla^2f_i(x)\preceq LI.\]
\end{assumption}
\begin{assumption}\label{Bprop}
Matrix $B\in\mathbb{R}^{n\times n}$ is a symmetric positive semi-definite matrix, has the same null space as matrix $A$, i.e., $Bx=0$ only if $Ax=0$, and is compatible with network topology, i.e., $B_{ij}\neq 0$ only if $(i,j)\in\mathcal{E}$.
\end{assumption}
We assume these conditions hold for the rest of the paper. The first assumption on the eigenvalues of the Hessian matrix of local objective functions is a standard assumption in proving the global linear rate of convergence. The second assumption requires matrix $B$ to represent the network topology, which is required for distributed implementation and the other assumptions on matrix $B$ are needed for convergence guarantees. With $A$ being the edge-node incidence matrix, one examples of matrix $B$ is $B=c A^\prime A$, with $A^\prime A$ being the graph Laplacian matrix. Another example is the weighted Laplacian matrix. 
We develop our algorithm based on the following form of primal-dual iteration. 
\begin{equation}\begin{aligned}x^{k+1}=x^{k}-\alpha\nabla_xL_a(x^k,\lambda^k)= x^k-\alpha\nabla f(x^k)-\alpha A^\prime\lambda^k-\alpha B x^k,\label{Primalupdate}\end{aligned}\end{equation}
\[\lambda^{k+1}=\lambda^{k}+\beta\nabla_\lambda L_a(x^{k+1},\lambda^k)=\lambda^k+\beta Ax^{k+1},\]
\fm{where $\alpha$ and $\beta$ are constant stepsize parameters.}
\begin{algorithm}
\caption{FlexPD-F}
\label{alg:FlexPD-F}
\begin{algorithmic}
\State{Initialization: for $i=1, 2, ..., n$ each agent $i$ picks $x_i^0$, sets $\lambda_{l_i}^0=0\quad\forall{\lambda_{l_i}\in\Lambda_i}$, and determines $\alpha$, $\beta$, and $T<\infty$}
\For{$k=1,2,...$}
 \State{\[x_i^{k+1,0}=x_i^k\]}
\For{$t=1,2,...,T$}
\State{\be x_i^{k+1,t}=x_i^{k+1,t-1}-\alpha\nabla f_i(x_i^{k+1,t-1})-\alpha\sum_{l=1}^e A^\prime_{il}\lambda_l^k-\alpha\sum_{j=1}^n B_{ij}x_j^{k+1,t-1} \label{eq:primalF}\ee}
\EndFor
\textbf{end for}
\State {\[x_i^{k+1}=x_i^{k+1,T}, \quad \lambda_{l_i}^{k+1}=\lambda_{l_i}^{k}+\beta\sum_{j=1}^n A_{l_ij}x_j^{k+1}\quad\forall{\lambda_{l_i}\in\Lambda_i}.\]}
\EndFor
\textbf{end for}
\end{algorithmic}
\end{algorithm}
In contrary to MM and ADMM algorithms that update the primal variable by minimizing the augmented Lagrangian, this iteration uses one gradient descent step to update the primal variable, and therefore is less expensive to execute. Different variations of the above iteration have been used to solve the consensus optimization problem \cref{eq:Newconsensus} \cite{shi2015extra,nedic2017achieving}, however, the convergence of MM is shown to be faster \cite{mokhtari2016decentralized}. This observation motivates the development of a framework that controls the trade-off between the performance and the execution complexity of primal-dual algorithms. In our FlexPD-F algorithm, the primal variable is updated through $T$ full gradient descent steps at each iteration. The intuition behind this method is that by increasing the number of gradient descent steps from 1 to $T$ at each iteration the resulting primal variable is closer to the minimizer of augmented Lagrangian at that iteration, due to the strong convexity of the augmented Lagrangian [c.f. \cref{funcprop} and \cref{Bprop}].  

\ff{\begin{remark}[General applicability of proposed algorithm]
We note that our proposed algorithm can be applied to more general settings. When the decision variable $x$ is in \red{$\mathbb{R}^p$}, we can apply our algorithm by using the Kronecker product of $A$ and p-dimensional identity matrix instead of $A$. Iterations \eqref{eq:primalF}-\eqref{eq:C} would be implemented for each of the $p$ components. The algorithm can also be adopted to other choices for matrix $A$ -- such as weighted edge-node incidence matrix \cite{wu2018decentralized}, graph Laplacian matrix \cite{tutunov2019distributed}, and weighted Laplacian matrix \cite{mokhtari2016decentralized, berahas2018balancing} -- and corresponding matrix $B$. Lastly, although our framework is motivated by a distributed setting, our proposed algorithms can be implemented for general equality constrained minimization problems of form \cref{eq:Newconsensus} in both centralized and distributed settings.
\end{remark}}
\begin{algorithm}
\caption{FlexPD-G}
\label{alg:FlexPD-G}
\begin{algorithmic}
\State{Initialization: for $i=1, 2, ..., n$ each agent $i$ picks $x_i^0$, sets $\lambda_{l_i}^0=0\quad\forall{\lambda_{l_i}\in\Lambda_i}$, and determines $\alpha$, $\beta$, and $T<\infty$}
\For{$k=1,2,...$}
\vspace{-.5cm}
 \State{\[x_i^{k+1,0}=x_i^k\]}
\For{$t=1,2,...,T$}
\State{\be x_i^{k+1,t}=x_i^{k+1,t-1}-\alpha\nabla f_i(x_i^{k+1,t-1})-\alpha\sum_{l=1}^e A^\prime_{il}\lambda_l^k-\alpha\sum_{j=1}^n B_{ij}x_j^{k} \label{eq:primalG}\ee}
\EndFor
\textbf{end for}
\State {\[x_i^{k+1}=x_i^{k+1,T},\quad \lambda_{l_i}^{k+1}=\lambda_{l_i}^{k}+\beta\sum_{j=1}^n A_{l_ij}x_j^{k+1}\quad\forall{\lambda_{l_i}\in\Lambda_i}.\]}
\EndFor
\textbf{end for}
\end{algorithmic}
\end{algorithm}
 \begin{algorithm}
\caption{FlexPD-C}
\label{alg:FlexPD-C}
\begin{algorithmic}
\State{Initialization: for $i=1, 2, ..., n$ each agent $i$ picks $x_i^0$, sets $\lambda_{l_i}^0=0\quad\forall{\lambda_{l_i}\in\Lambda_i}$, and determines $\alpha$, $\beta$, and $T<\infty$}
\For{$k=1,2,...$}
 \State{\[x_i^{k+1,0}=x_i^k\]}
\For{$t=1,2,...,T$}
\State{\begin{align}\label{eq:C} x_i^{k+1,t}=x_i^{k+1,t-1}-\alpha\nabla f_i(x_i^{k})-\alpha\sum_{l=1}^e A^\prime_{il}\lambda_l^k-\alpha\sum_{j=1}^n B_{ij}x_j^{k+1,t-1}\end{align}}
\EndFor
\textbf{end for}
\State {\[x_i^{k+1}=x_i^{k+1,T}, \quad \lambda_{l_i}^{k+1}=\lambda_{l_i}^{k}+\beta\sum_{j=1}^n A_{l_ij}x_j^{k+1}\quad\forall{\lambda_{l_i}\in\Lambda_i}.\]}
\EndFor
\textbf{end for}
\end{algorithmic}
\end{algorithm}
We next verify that FlexPD-F algorithm can be implemented in a distributed way. We note that at each outer iteration $k+1$ of \cref{alg:FlexPD-F}, each agent updates its primal variable by taking $T$ gradient descent steps. At each inner iteration $t$, each agent $i$ has access to its local gradient $\nabla f_i(x_i^{k+1,t-1})$ and the primal and dual variables of its neighbors, $\lambda_j^k$ and $x_j^{k+1,t-1}$, through communication, and computes $x_i^{k+1,t}$ using Eq. \cref{eq:primalF}. Agent $i$ then communicates this new variable to its neighbors. After $T$ gradient descent steps, agent $i$ updates its associated dual variables by using $x_i^{k+1,T}$ and $x_j^{k+1,T}$ from its neighbors. We note that each iteration of this algorithm involves $T$ gradient evaluation and $T$ rounds of communication for each agent. For settings with communication or computation limitations, the FlexPD-F algorithm might not be efficient. In what follows, we develop two other classes of algorithms which are adaptive to such settings. 
\par \sm{To keep communication limited, we introduce the FlexPD-G algorithm, in which the agents communicate once per iteration.} In our proposed algorithm in \cref{alg:FlexPD-G}, at each iteration $k+1$, agent $i$ goes through $T$ inner iterations. At each inner iteration $t$, each agent $i$ reevaluates its local gradient and updates its primal variable by using Eq. \cref{eq:primalG}. After $T$ inner iterations agent $i$ communicates its primal variable $x_i^{k+1,T}$ with its neighbors and updates its corresponding dual variables $\lambda_{l_i}$ by using local $x_i^{k+1,T}$ and $x_j^{k+1,T}$ from its neighbors. \ff{We note that for this algorithm to converge, we need an extra assumption on matrix $B$, which is introduced in \cref{ass:Bbound}.}
\par \sm{Finally, to avoid computational complexity, we develop the FlexPD-C algorithm, in which the gradient is evaluated once per iteration and is used for all primal updates in that iteration.} In our proposed framework in \cref{alg:FlexPD-C}, at each iteration $k+1$, agent $i$ computes its local gradient $\nabla f_i(x_i^k)$, and performs a predetermined number ($T$) of primal updates by \ew{repeatedly communicating with neighbors without recomputing its gradient [c.f. Eq \cref{eq:C}]}. Each agent $i$ then updates its corresponding dual variables $\lambda_{l_i}$ by using local $x_i^{k+1,T}$ and $x_j^{k+1,T}$ from its neighbors.
\par \fm{Under \Cref{funcprop}, there exists a unique optimal solution $\tilde{x}^*$ for problem \cref{mainproblem} and thus a unique $x^*$ \ew{exists}, at which the function value is bounded. Moreover, since $Null(A)\neq\emptyset$, the Slater's condition is satisfied. Consequently, strong duality holds and a dual optimal solution $\lambda^*$ exists.} We note that the projection of $\lambda^k$ in the null space of matrix $A^\prime $ would not affect the performance of algorithm, and therefore, the optimal dual solution is not uniquely defined, since for any optimal dual solution $\lambda^*$ the dual solution $\lambda^*+u$, where $u$ is in the null space of $A^\prime $, is also optimal. If the algorithm starts at $\lambda=0$, then all the iterates $\lambda^k$ are in the column space of $A$ and hence orthogonal to null space of $A^\prime $. Without loss of generality, we assume that in all three algorithms  $\lambda^0=0$, and when we refer to an optimal dual solution $\lambda^*$, we assume its projection onto the null space of $A^\prime $ is $0$. We note that $(x^*,\lambda^*)$ is a fixed point of FlexPD-F, FlexPD-G, and FlexPD-C iterations.
\section {Convergence Analysis} \label{sec:conv} 
In this section, we analyze the convergence properties of the three algorithms presented in the previous section. In \cref{GDproof}, \cref{Gproof}, and \cref{Cproof} we establish the linear rate of convergence for FlexPD-F, FlexPD-G, and FlexPD-C algorithms respectively. To start the analysis, we note that the dual update for all three algorithms has the following form 
\be \lambda^{k+1}=\lambda^k+\beta Ax^{k+1}.\label{eq:lambda} \ee
We also note that the KKT condition for problem \cref{eq:Newconsensus} implies \be\nabla f(x^*)+A^\prime\lambda^*=0,\quad Ax^*=0, \quad\mbox{and}\quad Bx^*=0, \label{kkt}\ee
where the last equality comes from the fact that $Null(B)=Null(A)$. Before diving in the convergence analysis of the algorithms, we state and prove an important inequality which is a useful tool in establishing the desired properties. 
\begin{lemma}\label{lemma:NormSquare}
	For any vectors $a$, $b$, and scalar $\xi>1$, we have
	\[(a+b)^\prime  (a+b) \leq \frac{\xi}{\xi-1}a^\prime a + \xi b^\prime  b.\]
\end{lemma}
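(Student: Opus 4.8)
The plan is to reduce the claim to the elementary inequality $2a^\prime b \le \gamma\, a^\prime a + \gamma^{-1} b^\prime b$, valid for every scalar $\gamma>0$, and then to calibrate $\gamma$ so that the resulting coefficients match the two asserted ones exactly. The whole argument is a weighted Young (arithmetic--geometric mean) estimate applied to the cross term, so there is no genuine obstacle; the only point that requires a moment's thought is the coefficient matching, which is where the hypothesis $\xi>1$ enters.

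First I would expand the left-hand side as $(a+b)^\prime(a+b) = a^\prime a + 2a^\prime b + b^\prime b$, so that the only quantity needing an upper bound is the cross term $2a^\prime b$. To bound it, I would start from the nonnegativity of a perfect square: for any $\gamma>0$,
\[
0 \le \bigl(\sqrt{\gamma}\,a - \gamma^{-1/2} b\bigr)^\prime\bigl(\sqrt{\gamma}\,a - \gamma^{-1/2} b\bigr) = \gamma\, a^\prime a - 2a^\prime b + \gamma^{-1} b^\prime b,
\]
which rearranges to $2a^\prime b \le \gamma\, a^\prime a + \gamma^{-1} b^\prime b$. Substituting this into the expansion yields the parametrized bound $(a+b)^\prime(a+b) \le (1+\gamma)\,a^\prime a + (1+\gamma^{-1})\,b^\prime b$.

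Finally I would pick the splitting parameter $\gamma = 1/(\xi-1)$, which is admissible precisely because $\xi>1$ forces $\gamma>0$ (this is the single place the hypothesis is used, and the natural candidate for the ``hard'' step, though it is really just algebra). With this choice one computes $1+\gamma = \xi/(\xi-1)$ and $1+\gamma^{-1} = 1 + (\xi-1) = \xi$, so the bound collapses to exactly $\tfrac{\xi}{\xi-1}\,a^\prime a + \xi\, b^\prime b$, which is the assertion. I would note in passing that the statement holds verbatim when $a^\prime a$, $b^\prime b$, and $a^\prime b$ are replaced by weighted inner products, since the perfect-square argument is insensitive to the choice of positive semi-definite weight; this generality is what makes the lemma a convenient workhorse in the subsequent convergence proofs.
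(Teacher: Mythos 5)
Your proof is correct and is essentially the paper's own argument: both reduce the claim to bounding the cross term $2a^\prime b$ by the perfect-square inequality $\frac{1}{\xi-1}a^\prime a + (\xi-1)b^\prime b \geq 2a^\prime b$, differing only in whether one expands the left-hand side or decomposes the right-hand side first. No gap.
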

\begin{proof}
Since $\xi>1$, we have $\frac{\xi-1}{\xi}+\frac{1}{\xi}=1$ and we can write the right hand side as
\begin{align*}
 \frac{\xi}{\xi-1}a^\prime a + \xi b^\prime  b= (\frac{\xi}{\xi-1}a^\prime a + \xi b^\prime  b)(\frac{\xi-1}{\xi}+\frac{1}{\xi})=a^\prime a+b^\prime  b+\frac{1}{\xi-1}a^\prime a+(\xi-1)b^\prime  b.
\end{align*}

We also have that 
\[0\leq \left(\sqrt{\frac{1}{\xi-1}}a-\sqrt{\xi-1}b\right)^\prime  \left(\sqrt{\frac{1}{\xi-1}}a-\sqrt{\xi-1}b\right) =  \frac{1}{\xi-1}a^\prime a+(\xi-1)b^\prime  b - 2a^\prime b,\] which implies that 
$\frac{1}{\xi-1}a^\prime a+(\xi-1)b^\prime  b\geq 2a^\prime b .$

We can then combine this into the previous equality and obtain the result.
\end{proof}
\subsection{Convergence Analysis of FlexPD-F}\label{GDproof}
In order to analyze the convergence properties of FlexPD-F algorithm, we first rewrite the primal update in \cref{alg:FlexPD-F} in the following compact form  
\be 
x^{k+1,t}=U x^{k+1,t-1}-\alpha \nabla f(x^{k+1,t-1})-\alpha A^\prime \lambda^k, \label{eq:GD}
\ee
 where $U=I-\alpha B$. We next proceed to prove the linear convergence rate for our proposed algorithm. In \cref{lemma:gradF-F}, \cref{lemma:strcvx-F}, and \cref{lemma:toCompare-F} we prove some key relations that we use to establish an upper bound on the Lyapunov function in \cref{thm:delta-F}. We then combine this bound with the result of \cref{lemma:MBound-F} to establish the linear rate of convergence for the FlexPD-F algorithm in \cref{thm:linConv-F}.
\begin{lemma}\label{lemma:gradF-F} Consider the primal-dual iterates as in \cref{alg:FlexPD-F}, we have
	\begin{align*}&\alpha (\nabla f(x^{k+1,T-1})- \nabla f(x^*) )= U(x^{k+1,T-1}-x^{k+1,T}) + \\& \alpha  (\beta  A^\prime A-B) (x^{k+1,T} - x^*) -\alpha  A^\prime (\lambda^{k+1} -\lambda ^*).\end{align*} 
\end{lemma}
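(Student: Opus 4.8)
The plan is to derive the identity directly, as an exact algebraic rearrangement, from the compact primal update \cref{eq:GD} evaluated at the last inner step $t=T$, combined with the dual update \cref{eq:lambda} and the KKT conditions \cref{kkt}. No inequality, fixed-point, or induction argument is needed here; the whole proof is careful bookkeeping, so the difficulty is only in matching terms and signs rather than in any genuine analytic content.

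First I would take \cref{eq:GD} with $t=T$, namely $x^{k+1,T}=U x^{k+1,T-1}-\alpha \nabla f(x^{k+1,T-1})-\alpha A^\prime \lambda^k$, and solve for the gradient term to get $\alpha \nabla f(x^{k+1,T-1}) = U x^{k+1,T-1} - x^{k+1,T} - \alpha A^\prime \lambda^k$. The key move is to split $U x^{k+1,T-1} = U(x^{k+1,T-1}-x^{k+1,T}) + U x^{k+1,T}$ and then use $U=I-\alpha B$ to rewrite $U x^{k+1,T} - x^{k+1,T} = -\alpha B x^{k+1,T}$. This already isolates the target's $U(x^{k+1,T-1}-x^{k+1,T})$ term alongside $-\alpha B x^{k+1,T}$.

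Next I would subtract $\alpha \nabla f(x^*)$ from both sides and invoke the first KKT relation $\nabla f(x^*)=-A^\prime \lambda^*$ to replace $\alpha \nabla f(x^*)$ by $-\alpha A^\prime \lambda^*$, which collapses the two dual terms into $-\alpha A^\prime(\lambda^k-\lambda^*)$. At this stage the right-hand side reads $U(x^{k+1,T-1}-x^{k+1,T}) - \alpha B x^{k+1,T} - \alpha A^\prime(\lambda^k-\lambda^*)$, which is the whole identity up to converting $\lambda^k$ into $\lambda^{k+1}$ and inserting the $x^*$ shifts.

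The only step needing a little care is producing the factored block $\alpha(\beta A^\prime A - B)(x^{k+1,T}-x^*)$ together with $\lambda^{k+1}$ rather than $\lambda^k$. Here I would substitute the dual update $\lambda^{k+1}=\lambda^k+\beta A x^{k+1,T}$ to trade $-\alpha A^\prime \lambda^k$ for $-\alpha A^\prime \lambda^{k+1}+\alpha\beta A^\prime A x^{k+1,T}$, which is exactly what introduces the $\beta A^\prime A$ coefficient. Combining $-\alpha B x^{k+1,T}$ with $\alpha\beta A^\prime A x^{k+1,T}$ gives $\alpha(\beta A^\prime A - B)x^{k+1,T}$, and the remaining KKT relations $Ax^*=0$ and $Bx^*=0$ (hence $A^\prime A x^*=0$ and $Bx^*=0$) let me append the vanishing term $-\alpha(\beta A^\prime A - B)x^*$ so that this coalesces into $\alpha(\beta A^\prime A - B)(x^{k+1,T}-x^*)$, yielding the claimed equality. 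I do not expect a real obstacle; the main things to watch are the sign from expanding $U=I-\alpha B$ and the correct direction of the $\lambda^k\leftrightarrow\lambda^{k+1}$ conversion.
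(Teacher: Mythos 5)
Your proposal is correct and follows essentially the same route as the paper: solve the $t=T$ primal update for $\alpha\nabla f(x^{k+1,T-1})$, use the dual update $\lambda^{k+1}=\lambda^k+\beta Ax^{k+1,T}$ to trade $\lambda^k$ for $\lambda^{k+1}$ and produce the $\beta A^\prime A$ term, and invoke the KKT relations together with $Ax^*=Bx^*=0$ to insert the vanishing $x^*$ shifts. The only cosmetic difference is that you expand $U=I-\alpha B$ explicitly to isolate $-\alpha Bx^{k+1,T}$, whereas the paper absorbs this directly into the grouped term $\alpha(\beta A^\prime A-B)x^{k+1,T}$; the algebra is identical.
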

\begin{proof}
	Consider the primal update in Eq. \cref{eq:GD} at iteration $k+1$ with $t=T$, we have
\[\alpha \nabla f(x^{k+1,T-1})  = Ux^{k+1,T-1}-x^{k+1,T} -\alpha  A^\prime  \lambda^k.\]
	Moreover,  we can substitute the expression for $\lambda^k$ from Eq. \cref{eq:lambda} into the previous equation and have
	\begin{align*} &\alpha \nabla f(x^{k+1,T-1})= Ux^{k+1,T-1}-x^{k+1,T} -\alpha  A^\prime  (\lambda ^{k+1} - \beta  A x^{k+1,T}) \\&= U(x^{k+1,T-1}-x^{k+1,T}) + \alpha(  \beta A^\prime A-B) x^{k+1,T} -\alpha  A^\prime \lambda^{k+1}.\end{align*} 
By using Eq. \cref{kkt}, we have
	$\alpha\nabla f(x^*)=-A^\prime \lambda^*-( \beta  A^\prime A-B)x^*$, which we subtract from the above equation to obtain the result.
\end{proof}
\begin{lemma}\label{lemma:MBound-F} If $U\succ \textbf{0}$ , i.e., $\alpha<\frac{1}{\rho(B)}$ for $B\neq\textbf{0}$, we have
\begin{align*}
\norm{x^{k+1,T-1}-x^*}^2_{U}
	+ \frac{\alpha }{\beta }\norm{\lambda^k-\lambda^*}^2\leq \Gamma_F^{T-1}\big(\norm{x^{k}-x^*}^2_{U}
	+ \frac{\alpha }{\beta }\norm{\lambda^k-\lambda^*}^2\big),
\end{align*}
with $\Gamma_F=\max\Bigg\lbrace{1+\frac{p\alpha\beta\rho(AA^\prime)}{p-1},p\Big(\sqrt{\rho(U)}+\alpha L\sqrt{\rho(U^{-1})} \Big)^2 \Bigg\rbrace}$ for any $p>1$.
\end{lemma}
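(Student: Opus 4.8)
The plan is to derive a one-step bound for the inner iteration of \cref{alg:FlexPD-F} and telescope it over the $T-1$ inner steps, using that the dual iterate $\lambda^k$ is frozen throughout the inner loop.

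First I would subtract a fixed-point relation from the compact primal update \cref{eq:GD}. Since \cref{Bprop} gives $Bx^*=0$, hence $Ux^*=x^*$, and the optimality conditions \cref{kkt} give $\nabla f(x^*)+A^\prime\lambda^*=0$, the optimum satisfies $x^*=Ux^*-\alpha\nabla f(x^*)-\alpha A^\prime\lambda^*$. Subtracting this from \cref{eq:GD} at inner index $t$ and writing $e=x^{k+1,t-1}-x^*$, $g=\nabla f(x^{k+1,t-1})-\nabla f(x^*)$, and $d=\lambda^k-\lambda^*$, I obtain the error recursion
\[ x^{k+1,t}-x^*=\big(Ue-\alpha g\big)-\alpha A^\prime d. \]
I would then measure this in the $U$-norm and apply the weighted version of \cref{lemma:NormSquare} (obtained by inserting $U^{1/2}$ into its statement) with scalar $p>1$, splitting off the dual term $-\alpha A^\prime d$ so that it carries the factor $\frac{p}{p-1}$ while $Ue-\alpha g$ carries the factor $p$:
\[ \norm{x^{k+1,t}-x^*}_U^2\le \frac{p}{p-1}\alpha^2\norm{A^\prime d}_U^2+p\,\norm{Ue-\alpha g}_U^2. \]

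Next I would bound the two terms. For the dual term, the positive semidefiniteness of $B$ gives $U\preceq I$, so $\norm{A^\prime d}_U^2\le\norm{A^\prime d}^2\le\rho(AA^\prime)\norm{d}^2$; matching this against the first entry of the maximum defining $\Gamma_F$ shows $\frac{p}{p-1}\alpha^2\norm{A^\prime d}_U^2\le(\Gamma_F-1)\frac{\alpha}{\beta}\norm{d}^2$. The main term is the crux. By the triangle inequality in the $U$-norm, $\norm{Ue-\alpha g}_U\le\norm{Ue}_U+\alpha\norm{g}_U$; I would bound $\norm{g}_U\le\norm{g}\le L\norm{e}\le L\sqrt{\rho(U^{-1})}\norm{e}_U$ using the Lipschitz gradient bound from \cref{funcprop} together with $U\preceq I$ and $\norm{e}^2\le\rho(U^{-1})\norm{e}_U^2$, and I would show $\norm{Ue}_U\le\sqrt{\rho(U)}\norm{e}_U$. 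Squaring the sum then yields $p\,\norm{Ue-\alpha g}_U^2\le p\big(\sqrt{\rho(U)}+\alpha L\sqrt{\rho(U^{-1})}\big)^2\norm{e}_U^2\le\Gamma_F\norm{e}_U^2$.

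The delicate point, which I expect to require the most care, is obtaining $\sqrt{\rho(U)}$ rather than $\rho(U)$ in the bound on $\norm{Ue}_U$: since $\norm{Ue}_U^2=e^\prime U^3e$, the naive estimate $U^2\preceq\rho(U)^2 I$ gives only the factor $\rho(U)$. The sharper inequality $U^2\preceq\rho(U)I$, valid precisely because $0\prec U\preceq I$ forces $\rho(U)\le1$, gives $U^3\preceq\rho(U)U$ and hence $\norm{Ue}_U\le\sqrt{\rho(U)}\norm{e}_U$. Combining the two term bounds produces the one-step inequality
\[ \norm{x^{k+1,t}-x^*}_U^2+\tfrac{\alpha}{\beta}\norm{d}^2\le\Gamma_F\Big(\norm{x^{k+1,t-1}-x^*}_U^2+\tfrac{\alpha}{\beta}\norm{d}^2\Big). \]
Finally, since $\lambda^k$ (hence $d$) is constant over the inner loop and $x^{k+1,0}=x^k$, iterating this bound for $t=1,\dots,T-1$ telescopes to the stated $\Gamma_F^{T-1}$ factor, completing the proof.
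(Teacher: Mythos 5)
Your proposal is correct and follows essentially the same route as the paper: the same error recursion obtained by subtracting the fixed-point relation, the same split of the dual term via \cref{lemma:NormSquare} with parameter $p$, the same contraction factor $\bigl(\sqrt{\rho(U)}+\alpha L\sqrt{\rho(U^{-1})}\bigr)^2$, and the same telescoping over the $T-1$ inner steps with $\lambda^k$ frozen. The only (harmless) difference is bookkeeping: the paper bounds the unweighted norm with a second Young parameter $q$ and then optimizes $q$ and passes to the $U$-norm via $U\preceq I$, whereas you work in the $U$-norm throughout and obtain the identical constant from the triangle inequality, at the cost of the extra (correctly handled) observation that $\rho(U)\le 1$ is needed for $\norm{Ue}_U\le\sqrt{\rho(U)}\norm{e}_U$.
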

\begin{proof} Consider the primal update in Eq. \cref{eq:GD} at $t=T-1$, by subtracting $x^*$ from both sides of this equality we have \[x^{k+1,T-1}-x^*=U x^{k+1,T-2}-x^*-\alpha\nabla f(x^{k+1,T-2})-\alpha A^\prime \lambda^k.\]
By using Eq. \cref{kkt} we have
$0=\alpha\big(\nabla f(x^*)+A^\prime\lambda^*+Bx^*\big),$
which we add to the previous equality to obtain
\[x^{k+1,T-1}-x^*=U(x^{k+1,T-2}-x^*)-\alpha\nabla \big(f(x^{k+1,T-2})-\nabla f(x^*)\big)-\alpha A^\prime (\lambda^k-\lambda^*).\]
Hence,  \[\norm{x^{k+1,T-1}-x^*}^2=
\norm{U(x^{k+1,T-2}-x^*)-\alpha\nabla \big(f(x^{k+1,T-2})-\nabla f(x^*)\big)-\alpha A^\prime (\lambda^k-\lambda^*)}^2.\]
By using the result of \cref{lemma:NormSquare}, and Lipschitz continuity of $\nabla f$, for any $p, q>1$ we have 
\begin{align*}&\norm{x^{k+1,T-1}-x^*}^2\leq\frac{p\alpha^2}{p-1}(\lambda^k-\lambda^*)^\prime AA^\prime(\lambda^k-\lambda^*)+ p\Big\Vert U(x^{k+1,T-2}-x^*)\\&-\alpha\nabla \big(f(x^{k+1,T-2})-\nabla f(x^*)\big)\Big\Vert^2\leq\frac{p\alpha^2}{p-1}(\lambda^k-\lambda^*)^\prime AA^\prime(\lambda^k-\lambda^*)+p\Big(q (x^{k+1,T-2}\\&-x^*)^\prime U^2(x^{k+1,T-2}-x^*)+\frac{q\alpha^2 L^2}{q-1}(x^{k+1,T-2}-x^*)^\prime(x^{k+1,T-2}-x^*)\Big).
\end{align*}
We now consider the second term in the right hand side of the previous inequality, by using the fact that $U\succ \textbf{0}$ we have
\begin{align*}&q (x^{k+1,T-2}-x^*)^\prime U^2(x^{k+1,T-2}-x^*)+\frac{q\alpha^2 L^2}{q-1}\norm{x^{k+1,T-2}-x^*}^2=(x^{k+1,T-2}\\&-x^*)^\prime U^{1/2}\Big(qU+\frac{q\alpha^2 L^2}{q-1}U^{-1}\Big) U^{1/2}(x^{k+1,T-2}-x^*).\end{align*}
By using the above two inequalities, we obtain
\be\begin{aligned}\label{eq:NormBound-F}&\norm{x^{k+1,T-1}-x^*}^2\leq\frac{p\alpha^2\rho(AA^\prime)}{p-1}\norm{\lambda^k-\lambda^*}^2\\&+p\Big(q\rho(U)+\frac{q\alpha^2 L^2}{q-1}\rho(U^{-1})\Big)\norm{x^{k+1,T-2}-x^*}^2_{U}.\end{aligned}\ee
Since the above inequality holds for all $q>1$, we can find the parameter $q$ that makes the right hand smallest, which would give us the most freedom to choose algorithm parameters.  The term $q\rho(U)+\frac{q\alpha^2L^2}{q-1} \rho(U^{-1})$ is convex in $q$ and to minimize it we set the derivative to 0 and have 
$q = 1+ 
\alpha L\frac{\sqrt{\rho(U^{-1}) }}{\sqrt{\rho(U)}}.$ Therefore, $q\rho(U)+\frac{q\alpha^2L^2}{q-1} \rho(U^{-1})=\Big(\sqrt{\rho(U)}+\alpha L\sqrt{\rho(U^{-1})}\Big)^2.$
We also note that matrix $B$ is positive semi-definite, therefore,
$\norm{x^{k+1,T-1}-x^*}^2_{U} \leq\norm{x^{k+1,T-1}-x^*}^2.$
By using the previous two relations and Eq. \cref{eq:NormBound-F} we obtain
\begin{align*}&\norm{x^{k+1,T-1}-x^*}^2_{U}\leq\frac{p\alpha^2\rho(AA^\prime)}{p-1}\norm{\lambda^k-\lambda^*}^2\\&+p\Big(\sqrt{\rho(U)}+\alpha L\sqrt{\rho(U^{-1})}\Big)^2\norm{x^{k+1,T-2}-x^*}^2_{U}.\end{align*}
By adding $\frac{\alpha}{\beta}\norm{\lambda^k-\lambda^*}^2$ to both sides of the previous inequality, we have
\begin{align*} &\norm{x^{k+1,T-1}-x^*}^2_{U}+\frac{\alpha}{\beta}\norm{\lambda^k-\lambda^*}^2\leq\Big(1+\frac{p\alpha\beta\rho(AA^\prime)}{p-1}\Big)\frac{\alpha}{\beta}\norm{\lambda^k-\lambda^*}^2\\&+p\Big(\sqrt{\rho(U)}+\alpha L\sqrt{\rho(U^{-1})}\Big)^2\norm{x^{k+1,T-2}-x^*}^2_{U}.\end{align*}
We can now write the previous inequality as follows
\be\begin{aligned}\label{eq:MBound-F}&\norm{x^{k+1,T-1}-x^*}^2_{U}+\frac{\alpha}{\beta}\norm{\lambda^k-\lambda^*}^2\leq\Gamma_F\Big(\norm{x^{k+1,T-2}-x^*}^2_{U}+\frac{\alpha}{\beta}\norm{\lambda^k-\lambda^*}^2\Big),\end{aligned}\ee
with $\Gamma_F=\max\Bigg\lbrace{1+\frac{p\alpha\beta\rho(AA^\prime)}{p-1},p\Big(\sqrt{\rho(U)}+\alpha L\sqrt{\rho(U^{-1})}\Big)^2 \Bigg\rbrace}.$

By applying inequality \cref{eq:MBound-F} recursively we complete the proof.
\end{proof}

\begin{lemma}\label{lemma:strcvx-F} Consider the primal-dual iterates as in \cref{alg:FlexPD-F}, if $\alpha<\frac{1}{\rho(B)}$, we have
 \begin{align*}  &\norm{x^{k+1,T}-x^*}^2_{P_F} +\frac{\alpha }{\beta }\norm{\lambda^{k+1}-\lambda^k}^2+ \norm{x^{k+1,T}-x^{k+1,T-1}}^2  _{Q_F} \leq\\&
\norm{x^{k+1,T-1}-x^*}^2_{U}- \norm{x^{k+1,T}-x^*}^2  _{U}
 + \frac{\alpha }{\beta }\left(\norm{\lambda^k-\lambda^*}^2 - \norm{\lambda^{k+1}-\lambda^*}^2\right),
 \end{align*}
 with $P_F=2\alpha  m I-\alpha \eta_1 I +2\alpha B-\alpha\beta  A^\prime A$ and $Q_F=U- \frac{\alpha  L^2}{\eta_1}I$. 
 \end{lemma}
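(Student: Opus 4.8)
The plan is to turn the exact gradient identity of \cref{lemma:gradF-F} into the claimed energy inequality by taking a single inner product, applying the standard polarization identities to generate telescoping weighted-norm differences, and then invoking strong convexity together with a Young-type bound (in the spirit of \cref{lemma:NormSquare}) to control the remaining gradient term. Throughout I would use the hypothesis $\alpha<\frac{1}{\rho(B)}$, which guarantees $U=I-\alpha B\succ\textbf{0}$, so that $\norm{\cdot}_U$ behaves like a genuine seminorm and the polarization manipulations are legitimate.

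First I would take the inner product of the identity in \cref{lemma:gradF-F} with $2(x^{k+1,T}-x^*)$. The left-hand side becomes $2\alpha(\nabla f(x^{k+1,T-1})-\nabla f(x^*))^\prime(x^{k+1,T}-x^*)$, the quantity I will eventually bound from below. On the right-hand side, the $U$-term is treated by the polarization identity $2(x^{k+1,T}-x^*)^\prime U(x^{k+1,T-1}-x^{k+1,T})=\norm{x^{k+1,T-1}-x^*}^2_U-\norm{x^{k+1,T}-x^*}^2_U-\norm{x^{k+1,T-1}-x^{k+1,T}}^2_U$, which supplies both the $\norm{\cdot}_U$ telescoping terms of the claim and the $U$-part of the $Q_F$ contribution; the quadratic term $2\alpha(x^{k+1,T}-x^*)^\prime(\beta A^\prime A-B)(x^{k+1,T}-x^*)$ is already a form in $x^{k+1,T}-x^*$ and supplies the $B$ and $A^\prime A$ pieces of $P_F$. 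For the dual term I would use \cref{eq:lambda} in the form $A(x^{k+1,T}-x^*)=\frac{1}{\beta}(\lambda^{k+1}-\lambda^k)$ (valid since $Ax^*=0$) to rewrite $-2\alpha(x^{k+1,T}-x^*)^\prime A^\prime(\lambda^{k+1}-\lambda^*)$ as $-\frac{2\alpha}{\beta}(\lambda^{k+1}-\lambda^k)^\prime(\lambda^{k+1}-\lambda^*)$, and a second polarization identity converts this into $-\frac{\alpha}{\beta}\big(\norm{\lambda^{k+1}-\lambda^*}^2-\norm{\lambda^k-\lambda^*}^2+\norm{\lambda^{k+1}-\lambda^k}^2\big)$, producing the dual telescoping differences and the $\frac{\alpha}{\beta}\norm{\lambda^{k+1}-\lambda^k}^2$ term.

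It then remains to lower-bound the gradient inner product, where strong convexity enters. I would split $\nabla f(x^{k+1,T-1})-\nabla f(x^*)=\big(\nabla f(x^{k+1,T})-\nabla f(x^*)\big)+\big(\nabla f(x^{k+1,T-1})-\nabla f(x^{k+1,T})\big)$. Strong convexity applied to the first piece against $x^{k+1,T}-x^*$ yields $2\alpha m\norm{x^{k+1,T}-x^*}^2$, i.e.\ the $2\alpha m I$ part of $P_F$. For the cross piece I would apply a Young inequality with free parameter $\eta_1$ together with $L$-Lipschitz continuity of $\nabla f$, producing $-\alpha\eta_1\norm{x^{k+1,T}-x^*}^2$ (the $-\alpha\eta_1 I$ part of $P_F$) and $-\frac{\alpha L^2}{\eta_1}\norm{x^{k+1,T-1}-x^{k+1,T}}^2$, which combines with the $-\norm{x^{k+1,T-1}-x^{k+1,T}}^2_U$ above to give the $Q_F=U-\frac{\alpha L^2}{\eta_1}I$ weighting. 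Collecting all contributions and rearranging yields the stated inequality; I would keep $\eta_1$ free here, since (as with $q$ in \cref{lemma:MBound-F}) it will be optimized later when $P_F$ and $Q_F$ must be shown positive definite.

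I expect the main obstacle to be the bookkeeping of the quadratic forms, and in particular the $A^\prime A$ contributions, which enter from two separate sources: the augmented-Lagrangian Hessian $\beta A^\prime A-B$ carried in from \cref{lemma:gradF-F}, and the dual increment, since $\norm{\lambda^{k+1}-\lambda^k}^2=\beta^2\norm{x^{k+1,T}-x^*}^2_{A^\prime A}$. Tracking the signs and coefficients of these $A^\prime A$ terms so that they aggregate into exactly the combination appearing in $P_F$, and confirming that the $U$-weighted cross terms and the Young remainder telescope into precisely $Q_F$, is the delicate part; the strong-convexity and Young steps themselves are routine once the inner product is set up.
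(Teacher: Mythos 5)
Your proposal is correct and follows essentially the same route as the paper: the paper likewise combines strong convexity of $f$ (after adding and subtracting $\nabla f(x^{k+1,T-1})$), the gradient identity of \cref{lemma:gradF-F}, Young's inequality with parameter $\eta_1$ plus Lipschitz continuity for the cross term, the two polarization identities for the $U$-weighted and dual inner products, and finally $\norm{\lambda^{k+1}-\lambda^k}^2=\beta^2\norm{x^{k+1,T}-x^*}^2_{A^\prime A}$ to merge the two $A^\prime A$ contributions into $P_F$. The only cosmetic difference is order of operations (the paper starts from the strong-convexity inequality and substitutes the identity, whereas you take the inner product of the identity first), and you correctly single out the $A^\prime A$ bookkeeping as the one delicate step, which is exactly where the paper's proof concludes.
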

\begin{proof}
	From strong convexity of function $f(x)$, we have
	\begin{align*}&2\alpha  m \norm{x^{k+1,T}-x^*}^2 \leq 2\alpha  (x^{k+1,T}-x^*)^\prime  \big(\nabla f(x^{k+1,T}) - \nabla f(x^*)\big) = 2\alpha  (x^{k+1,T}-x^*)^\prime \\& \big(\nabla f(x^{k+1,T})-\nabla f(x^{k+1,T-1})\big) + 2\alpha  (x^{k+1,T}-x^*)^\prime  \big(\nabla f(x^{k+1,T-1})-\nabla f(x^*)\big),\end{align*} where we added and subtracted a term $(x^{k+1,T}-x^*)^\prime  \nabla f(x^{k+1,T-1})$.
	We can substitute the equivalent expression of $ \alpha (\nabla f(x^{k+1,T-1})- \nabla f(x^*) )$ from \cref{lemma:gradF-F} and have
		\be\begin{aligned}\label{eq:update1-F}&2\alpha  m \norm{x^{k+1,T}-x^*}^2\leq 2\alpha (x^{k+1,T}-x^*)^\prime  \big(\nabla f(x^{k+1,T})-\nabla f(x^{k+1,T-1})\big)+\\&2\alpha  (x^{k+1,T}-x^*)^\prime  (\beta  A^\prime A - B)(x^{k+1,T}-x^*) + 2(x^{k+1,T}-x^*)^\prime  U(x^{k+1,T-1}-x^{k+1,T}) \\&-2\alpha (x^{k+1,T}-x^*)^\prime   A^\prime (\lambda^{k+1} -\lambda ^*). \end{aligned}\ee
		We also have, by Young's inequality, for all $\eta_1>0$, 
		\begin{align*}&2\alpha(x^{k+1,T}-x^*)^\prime  \big(\nabla f(x^{k+1,T})-\nabla f(x^{k+1,T-1})\big) \leq\frac{\alpha }{\eta_1}\Vert\nabla f(x^{k+1,T})-\nabla f(x^{k+1,T-1})\\&\Vert^2+\alpha \eta_1 \norm{x^{k+1,T}-x^*}^2\leq\alpha \eta_1 \norm{x^{k+1,T}-x^*}^2+\frac{\alpha  L^2 }{\eta_1}\norm{x^{k+1,T}-x^{k+1,T-1}}^2,\end{align*}
		where the second inequality holds by the Lipschitz continuity of $\nabla f(.)$.
By the dual update Eq. \cref{eq:lambda} and feasibility of $x^*$, we have 
$Ax^{k+1} = \frac{1}{\beta }(\lambda^{k+1}-\lambda^k),\quad Ax^* = 0.$
These two equations combined yields
$\alpha (x^{k+1}-x^*)^\prime  A^\prime (\lambda^{k+1} -\lambda ^*) = \frac{\alpha }{\beta }(\lambda^{k+1}-\lambda^k)^\prime (\lambda^{k+1}-\lambda^*).$ Hence we can rewrite Eq. \cref{eq:update1-F} as 
\begin{align*}&2\alpha  m \norm{x^{k+1,T}-x^*}^2\leq 2\alpha (x^{k+1,T}-x^*)^\prime (\beta A^\prime A-B)(x^{k+1,T}-x^*)+\\& \alpha \eta_1 \norm{x^{k+1,T}-x^*}^2+\frac{\alpha  L^2}{\eta_1}\norm{x^{k+1,T}-x^{k+1,T-1}}^2 - 2(x^{k+1,T}-x^*)^\prime U \\& (x^{k+1,T}-x^{k+1,T-1}) -2\frac{\alpha }{\beta }(\lambda^{k+1}-\lambda^k)^\prime  (\lambda^{k+1}-\lambda^*).
 \end{align*}
 We focus on the last two terms. First, since matrix $U$ is symmetric positive definite, we have 
\begin{align*}&-2(x^{k+1,T}-x^*)^\prime U(x^{k+1,T}-x^{k+1,T-1})=\\& \norm{x^{k+1,T-1}-x^*}^2_{U} - \norm{x^{k+1,T}-x^*}^2_{U}- \norm{x^{k+1,T}-x^{k+1,T-1}}^2_{U}.\end{align*}
 Similarly, 
\[-2\frac{\alpha }{\beta }(\lambda^{k+1}-\lambda^k)^\prime  (\lambda^{k+1}-\lambda^*) = \frac{\alpha }{\beta }\Big(\norm{\lambda^k-\lambda^*}^2 - \norm{\lambda^{k+1}-\lambda^*}^2-\norm{\lambda^{k+1}-\lambda^k}^2\Big).\]
 Now we combine the terms in the preceding three relations and have 
\begin{align*}&2\alpha  m \norm{x^{k+1,T}-x^*}^2\leq 2\alpha (x^{k+1,T}-x^*)^\prime  (\beta A^\prime A-B)(x^{k+1,T}-x^*)+ \alpha \eta_1 \Vert x^{k+1,T}\\&-x^*\Vert^2+\frac{\alpha  L^2}{\eta_1}\norm{x^{k+1,T}-x^{k+1,T-1}}^2 +\norm{x^{k+1,T-1}-x^*}^2_{U}- \norm{x^{k+1,T}-x^*}^2_{U}\\&- \norm{x^{k+1,T}-x^{k+1,T-1}}^2_{U} + \frac{\alpha }{\beta }\left(\norm{\lambda^k-\lambda^*}^2 - \norm{\lambda^{k+1}-\lambda^*}^2-\norm{\lambda^{k+1}-\lambda^k}^2\right).
 \end{align*}
 We now use Eq. \cref{eq:lambda} together with the fact that $Ax^*=0$ to obtain $\norm{\lambda^{k+1}-\lambda^k}^2 =\beta ^2(x^{k+1}-x^*)^\prime (A^\prime A)(x^{k+1}-x^*)$. By using this relation rearranging the terms, we complete the proof.
\end{proof}
\begin{lemma}\label{lemma:toCompare-F} Consider the primal-dual iteration as in \cref{alg:FlexPD-F}, if $\alpha<\frac{1}{\rho(B)}$, we have for any $c,d, g>1$
 \begin{align*}
  &\norm{\lambda^{k+1}-\lambda^*}^2\leq \frac{d}{\alpha ^2 s(AA^\prime )} \left(\frac{e}{e-1} \rho(U^2)+e \alpha ^2L^2\right)\norm{x^{k+1,T-1}-x^{k+1,T}}^2 \\ \nonumber&+ \frac{d}{(d-1)\alpha ^2 s(AA^\prime )} \left(\frac{g}{g-1} \alpha ^2 \rho (\beta  A^\prime A-B)^2+g\alpha ^2L^2\right)\norm{x^{k+1,T} - x^*}^2.
  \end{align*}
\end{lemma}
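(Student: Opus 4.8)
The plan is to bound $\norm{\lambda^{k+1}-\lambda^*}^2$ by isolating the dual term in \cref{lemma:gradF-F} and then converting a bound on $\norm{A^\prime(\lambda^{k+1}-\lambda^*)}$ into one on $\norm{\lambda^{k+1}-\lambda^*}$. Rearranging the identity in \cref{lemma:gradF-F} to solve for the dual term gives
\[\alpha A^\prime(\lambda^{k+1}-\lambda^*) = U(x^{k+1,T-1}-x^{k+1,T}) + \alpha(\beta A^\prime A - B)(x^{k+1,T}-x^*) - \alpha\big(\nabla f(x^{k+1,T-1}) - \nabla f(x^*)\big).\]
The first step is to split the gradient difference by inserting $\nabla f(x^{k+1,T})$, writing $\nabla f(x^{k+1,T-1})-\nabla f(x^*) = \big(\nabla f(x^{k+1,T-1})-\nabla f(x^{k+1,T})\big) + \big(\nabla f(x^{k+1,T})-\nabla f(x^*)\big)$. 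This lets me regroup the right-hand side into a block $a_1 = U(x^{k+1,T-1}-x^{k+1,T}) - \alpha\big(\nabla f(x^{k+1,T-1})-\nabla f(x^{k+1,T})\big)$ that depends only on the inner increment $x^{k+1,T-1}-x^{k+1,T}$, and a block $a_2 = \alpha(\beta A^\prime A - B)(x^{k+1,T}-x^*) - \alpha\big(\nabla f(x^{k+1,T})-\nabla f(x^*)\big)$ that depends only on $x^{k+1,T}-x^*$.

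Next I would apply \cref{lemma:NormSquare} three times. Once on $\norm{a_1+a_2}^2$ with parameter $d$, taking $a_2$ as the first argument and $a_1$ as the second so that $a_2$ receives the $\frac{d}{d-1}$ factor and $a_1$ the $d$ factor (this is what puts $\frac{d}{d-1}$ on the $x^{k+1,T}-x^*$ term and $d$ on the increment term, matching the statement). Then once inside each block: for $a_1$, with parameter $e$, I bound $\norm{U(x^{k+1,T-1}-x^{k+1,T})}^2 \leq \rho(U^2)\norm{x^{k+1,T-1}-x^{k+1,T}}^2$ and use $L$-Lipschitz continuity of $\nabla f$ on the gradient term; for $a_2$, with parameter $g$, I bound $\norm{(\beta A^\prime A - B)(x^{k+1,T}-x^*)}^2 \leq \rho(\beta A^\prime A - B)^2\norm{x^{k+1,T}-x^*}^2$ and again use Lipschitz continuity. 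This produces exactly the two bracketed coefficients of the statement, each multiplied by $\alpha^2$, as an upper bound on $\alpha^2\norm{A^\prime(\lambda^{k+1}-\lambda^*)}^2$.

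The step carrying the real content, and the main obstacle, is the conversion from $\norm{A^\prime(\lambda^{k+1}-\lambda^*)}^2$ to $\norm{\lambda^{k+1}-\lambda^*}^2$. The crude inequality via the smallest eigenvalue of $AA^\prime$ is useless here, since $AA^\prime$ is singular. The key is that, by the reduction established before the convergence analysis, all iterates $\lambda^k$ (starting from $\lambda^0=0$) lie in the column space of $A$, and $\lambda^*$ is chosen with zero projection onto $\mathrm{Null}(A^\prime)$; hence $\lambda^{k+1}-\lambda^*$ is orthogonal to $\mathrm{Null}(A^\prime) = \mathrm{Null}(AA^\prime)$. Restricted to this complementary subspace $AA^\prime$ is positive definite with smallest eigenvalue $s(AA^\prime)$, so $(\lambda^{k+1}-\lambda^*)^\prime AA^\prime(\lambda^{k+1}-\lambda^*) \geq s(AA^\prime)\norm{\lambda^{k+1}-\lambda^*}^2$, giving $\norm{\lambda^{k+1}-\lambda^*}^2 \leq \frac{1}{s(AA^\prime)}\norm{A^\prime(\lambda^{k+1}-\lambda^*)}^2$. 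Dividing the combined bound from the previous paragraph by $\alpha^2$ and applying this spectral inequality yields the claim; note that the parameter written $c$ in the hypothesis should be read as the $e$ appearing in the first bracket.
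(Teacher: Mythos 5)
Your proposal is correct and follows essentially the same route as the paper's proof: the same rearrangement of the identity from \cref{lemma:gradF-F}, the same insertion of $\nabla f(x^{k+1,T})$ to split into the two blocks, the same three applications of \cref{lemma:NormSquare} with the same assignment of $d$, $e$, $g$, and the same use of the column-space/orthogonality argument to get $\norm{A^\prime(\lambda^{k+1}-\lambda^*)}^2\geq s(AA^\prime)\norm{\lambda^{k+1}-\lambda^*}^2$. Your observation that the $c$ in the lemma's hypothesis should read $e$ is also accurate.
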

\begin{proof}
We recall from \cref{lemma:gradF-F} that \begin{align*}&\alpha  A^\prime (\lambda^{k+1} -\lambda ^*)= \\&
	U(x^{k+1,T-1}-x^{k+1,T}) + \alpha  (\beta  A^\prime A-B) (x^{k+1,T} - x^*) - \alpha \big(\nabla f(x^{k+1,T-1})- \nabla f(x^*)\big).\end{align*}
By adding and subtracting a term of $\nabla f(x^{k+1,T})$ and taking squared norm from both sides, we have
\begin{align*}&\norm{\alpha  A^\prime (\lambda^{k+1} -\lambda ^*)}^2 = \big\Vert U(x^{k+1,T-1}-x^{k+1,T}) + \alpha  (\beta  A^\prime A-B) (x^{k+1,T} - x^*) \\&- \alpha \left(\nabla f(x^{k+1,T-1})- \nabla f(x^{k+1,T} )\right)- \alpha \left(\nabla f(x^{k+1,T})- \nabla f(x^*) \right)\big\Vert^2.\end{align*}

We now apply \cref{lemma:NormSquare} and have for any $d,e,g>1$, $\norm{\alpha  A^\prime (\lambda^{k+1} -\lambda ^*)}^2 \leq$
\begin{align*}& d\left(\frac{e}{e-1}\norm{x^{k+1,T-1}-x^{k+1,T}}^2_{ U^2} +e \alpha ^2\norm{\nabla f(x^{k+1,T-1})- \nabla f(x^{k+1,T})}^2\right) \\&+ \frac{d}{d-1}\left(\frac{g}{g-1} \alpha ^2\norm{x^{k+1,T} - x^*}^2_{(\beta  A^\prime A-B)^2}+g\alpha ^2\norm{\nabla f(x^{k+1,T})- \nabla f(x^*) }^2\right).
\end{align*}

Since $\lambda^0=0$ and $\lambda^{k+1} = \lambda^k + \beta  Ax^{k+1,T}$, we have that $\lambda^k$ is in the column space of $A$ and hence orthogonal to the null space of $A^\prime $, hence we have
$\norm{\alpha  A^\prime (\lambda^{k+1} -\lambda ^*)}^2\geq \alpha ^2s(AA^\prime )\norm{\lambda^{k+1}-\lambda^*}^2$. By using this inequality and the Lipschitz gradient property of function $f(.)$ , we have 
 \begin{align*}
 &\alpha ^2 s(AA^\prime )\norm{\lambda^{k+1}-\lambda^*}^2\leq d\left((x^{k+1,T-1}-x^{k+1,T})^\prime  \left[\frac{e}{e-1}  U^2+e \alpha ^2L^2I\right](x^{k+1,T-1}-x^{k+1,T}) \right) \\&+ \frac{d}{d-1}\left((x^{k+1,T} - x^*)^\prime   \left[\frac{g}{g-1} \alpha ^2 (\beta  A^\prime A-B)^2+g\alpha ^2L^2I\right] (x^{k+1,T} - x^*)\right).
 \end{align*}
We next bound matrices $\frac{e}{e-1}  U^2+e \alpha ^2L^2I$ and $\frac{g}{g-1} \alpha ^2 (\beta  A^\prime A-B)^2+\alpha ^2gL^2I$ by their largest eigenvalues to obtain the result. 
\end{proof}
\begin{theorem}\label{thm:delta-F}
For $0<\eta_1<2m$, if
$\alpha< \frac{1}{L^2/\eta_1+\rho(B)}$ and $\beta<\frac{2m-\eta_1}{\rho(A^\prime A)}$, there exists $\delta_F>0$, such that 
	\begin{align*}&\norm{x^{k+1,T}-x^*}^2_{U}+ \frac{\alpha }{\beta }\norm{\lambda^{k+1}-\lambda^*}^2\leq\frac{\norm{x^{k+1,T-1}-x^*}^2_{U}
	+ \frac{\alpha }{\beta }\norm{\lambda^k-\lambda^*}^2}{1+\delta_F}. 
\end{align*}
\end{theorem}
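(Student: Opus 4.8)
The plan is to read the claimed contraction as a one-step energy decrease and obtain it directly by combining the three preceding lemmas, with the existence of $\delta_F$ reduced to a positive-definiteness check. Write $\Phi_{T-1}=\norm{x^{k+1,T-1}-x^*}^2_U+\frac{\alpha}{\beta}\norm{\lambda^k-\lambda^*}^2$ and $\Phi_T=\norm{x^{k+1,T}-x^*}^2_U+\frac{\alpha}{\beta}\norm{\lambda^{k+1}-\lambda^*}^2$, so that the asserted inequality is equivalent to $\delta_F\,\Phi_T\le \Phi_{T-1}-\Phi_T$. The key observation is that the right-hand side of \cref{lemma:strcvx-F} is \emph{exactly} $\Phi_{T-1}-\Phi_T$. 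Hence that lemma already yields
\[
\Phi_{T-1}-\Phi_T \;\ge\; \norm{x^{k+1,T}-x^*}^2_{P_F}+\tfrac{\alpha}{\beta}\norm{\lambda^{k+1}-\lambda^k}^2+\norm{x^{k+1,T}-x^{k+1,T-1}}^2_{Q_F}.
\]
Since $\alpha,\beta>0$, the middle term is nonnegative and may be dropped, so it suffices to find $\delta_F>0$ with $\delta_F\,\Phi_T\le \norm{x^{k+1,T}-x^*}^2_{P_F}+\norm{x^{k+1,T}-x^{k+1,T-1}}^2_{Q_F}$.

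The obstacle is that $\Phi_T$ carries the dual term $\frac{\alpha}{\beta}\norm{\lambda^{k+1}-\lambda^*}^2$, which is absent on the right. This is precisely what \cref{lemma:toCompare-F} removes: it bounds $\norm{\lambda^{k+1}-\lambda^*}^2$ by $C_1\norm{x^{k+1,T}-x^{k+1,T-1}}^2+C_2\norm{x^{k+1,T}-x^*}^2$, where $C_1,C_2\ge0$ are the explicit coefficients appearing there for fixed free parameters $d,e,g>1$. Substituting this into $\Phi_T$ turns the target into
\[
\delta_F\Big[\norm{x^{k+1,T}-x^*}^2_{U+\frac{\alpha}{\beta}C_2 I}+\tfrac{\alpha}{\beta}C_1\norm{x^{k+1,T}-x^{k+1,T-1}}^2\Big]\le \norm{x^{k+1,T}-x^*}^2_{P_F}+\norm{x^{k+1,T}-x^{k+1,T-1}}^2_{Q_F},
\]
which, matching the two quadratic blocks, is implied by the matrix conditions $\delta_F\big(U+\frac{\alpha}{\beta}C_2 I\big)\preceq P_F$ and $\delta_F\,\frac{\alpha}{\beta}C_1 I\preceq Q_F$. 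Because $U+\frac{\alpha}{\beta}C_2 I\succ0$ and $\frac{\alpha}{\beta}C_1 I\succeq0$, both conditions hold for all sufficiently small $\delta_F>0$ as soon as $P_F\succ0$ and $Q_F\succ0$; one then simply takes $\delta_F$ as the smaller of the two eigenvalue ratios.

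The remaining work is therefore the positive definiteness of $P_F$ and $Q_F$ under the stated step-size windows, which I would verify by eigenvalue bounds. For $Q_F=U-\frac{\alpha L^2}{\eta_1}I=(1-\frac{\alpha L^2}{\eta_1})I-\alpha B$, the smallest eigenvalue is $1-\alpha\big(\frac{L^2}{\eta_1}+\rho(B)\big)$, so $Q_F\succ0$ is exactly the hypothesis $\alpha<\frac{1}{L^2/\eta_1+\rho(B)}$ (which also forces $\alpha<\frac{1}{\rho(B)}$, i.e. $U\succ0$, used throughout). For $P_F=\alpha(2m-\eta_1)I+2\alpha B-\alpha\beta A^\prime A$ I would split along the common null space of $A$ and $B$ guaranteed by \cref{Bprop}: on that subspace $P_F$ reduces to $\alpha(2m-\eta_1)I\succ0$ since $\eta_1<2m$, while on its orthogonal complement $\alpha(2m-\eta_1)I-\alpha\beta A^\prime A\succ0$ because $\beta<\frac{2m-\eta_1}{\rho(A^\prime A)}$, and adding $2\alpha B\succeq0$ preserves positivity; hence $P_F\succ0$. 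The main difficulty I anticipate is bookkeeping rather than conceptual, namely keeping $\eta_1,d,e,g$ consistent with \cref{lemma:strcvx-F,lemma:toCompare-F} and confirming the step-size windows are nonempty, but no idea beyond chaining the three lemmas with these eigenvalue estimates is needed.
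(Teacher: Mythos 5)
Your proposal is correct and follows essentially the same route as the paper: it chains \cref{lemma:strcvx-F} with \cref{lemma:toCompare-F}, reduces the claim to the two matrix inequalities $\delta_F\bigl(U+\tfrac{\alpha}{\beta}C_2 I\bigr)\preceq P_F$ and $\delta_F\tfrac{\alpha}{\beta}C_1 I\preceq Q_F$, and verifies that the stated stepsize windows make $P_F$ and $Q_F$ positive definite. The only difference is that the paper additionally optimizes the free parameters $e,g$ and records explicit upper bounds on $\delta_F$, whereas you argue existence abstractly via eigenvalue ratios, which suffices for the statement as given.
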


\begin{proof}
To show the result, we will show that
\begin{align*}   &\delta_F\left(\norm{x^{k+1,T}-x^*}^2_{U}+ \frac{\alpha }{\beta }\norm{\lambda^{k+1}-\lambda^*}^2\right)\leq  \norm{x^{k+1,T-1}-x^*}^2_{U} \\&- \norm{x^{k+1,T}-x^*}^2_{U}
+ \frac{\alpha }{\beta }\left(\norm{\lambda^k-\lambda^*}^2 - \norm{\lambda^{k+1}-\lambda^*}^2\right),
\end{align*}
for some $\delta_F>0$. By comparing the above inequality to the result of \cref{lemma:strcvx-F}, it suffices to show that
\[\delta_F\left(\norm{x^{k+1,T}-x^*}^2_{U}+ \frac{\alpha }{\beta }\norm{\lambda^{k+1}-\lambda^*}^2\right)\leq  \norm{x^{k+1,T}-x^*}^2_{P_F}+\norm{x^{k+1,T}-x^{k+1,T-1}}^2_{Q_F}.\]
We collect the terms and we will focus on showing 
\begin{align*}\label{ineq:linearRateToShow-F}   &\norm{\lambda^{k+1}-\lambda^*}^2\leq  \frac{\beta }{\delta_F\alpha }\norm{x^{k+1,T}-x^{k+1,T-1}}^2_{Q_F}+\frac{\beta }{\delta_F\alpha }\norm{x^{k+1,T}-x^*}^2_{P_F-\delta_F U}.\end{align*}

By comparing this to the result of \cref{lemma:toCompare-F}, we need for some $\delta_F>0$,   \[\frac{\beta }{\delta_F\alpha }Q_F \succcurlyeq \frac{d}{\alpha ^2 s(AA^\prime )} \left(\frac{e}{e-1} \rho(U^2)+e \alpha ^2L^2\right)I\] and 
  \[\frac{\beta }{\delta_F\alpha }\big(P_F-\delta_F U\big)\succcurlyeq  \frac{d}{(d-1)\alpha ^2 s(AA^\prime )} \big(\frac{g}{g-1} \alpha ^2 \rho (\beta  A^\prime A-B)^2+g\alpha ^2L^2\big)I.\]
Since \cref{lemma:toCompare-F} holds for all $e,g>1$, we can find the parameters $e$ and $g$ to make the right hand side of the previous two relations smallest, which would give us the most freedom to choose algorithm parameters.  The term $\frac{e}{e-1} \rho(I-\alpha  B)^2 + e \alpha ^2L^2$ is convex in $e$ and to minimize it we set derivative to 0 and have 
$ e = 1+ \frac{\rho(U)}{\alpha  L}.$
Similarly, we choose $g$ to be
$g =1 + \frac{ \rho (\beta  A^\prime A-B)}{ L}$
With these parameter choices, we have
$\frac{e}{e-1} \rho(U^2)+e \alpha ^2L^2= (\rho(U)+ \alpha  L)^2,$ and
$\frac{g}{g-1} \alpha ^2 \rho (\beta  A^\prime A-B)^2+\alpha ^2gL^2 = \alpha ^2 (\rho(\beta  A^\prime A-B)+L)^2.$ By substituting these relations and the by considering the definitions of $P_F$ and $Q_F$ from \cref{lemma:strcvx-F}, the above inequalities are satisfied if
\[\frac{\beta}{\delta_F\alpha}(1-\alpha \rho(B)-\frac{\alpha L^2}{\eta_1}) \geq \frac{d}{\alpha s(AA')} (1+ \alpha L)^2\] and
  \[\frac{\beta}{\delta_F\alpha}\Big(2\alpha m-\alpha\eta_1-\alpha\beta\rho(A'A) -\delta_F\big(1-\alpha\rho(B)\big)\Big) \geq  \frac{d \alpha^2 (\rho(\beta A'A-B)+L)^2}{(d-1)\alpha s(AA')}.
\] For the first inequality, we can multiply both sides by $\delta_F$ and rearrange the terms and have
$
\delta_F \leq \frac{\beta (1-\alpha \rho(B)-\frac{\alpha L^2}{\eta_1})s(AA')  }{d(1+ \alpha L)^2}.
$
We can similarly solve for the second inequality and have
$\delta_F\leq\frac{ \beta(2\alpha m-\alpha\eta_1-\alpha\beta\rho (A'A) )}{ \left(\frac{d }{(d-1) s(AA')} \alpha^2 (\rho(\beta A'A-B)+L)^2+\beta\big(1-\alpha \rho(B)\big)\right)}.
$
This give some $\delta_F>0$ as long as $\eta_1<2m$, 
$\alpha< \frac{1}{L^2/\eta_1+\rho(B)},$ and
$\beta<\frac{2m-\eta_1}{\rho(A^\prime A)}.$ 
The parameter set is nonempty and thus we can find a $\delta_F>0$ which establishes the desired result.
\end{proof}	
\begin{theorem}\label{thm:linConv-F} Consider \cref{alg:FlexPD-F} with $T>1$, recall the definition of $\Gamma_F$ from \cref{lemma:MBound-F}, and define $ z^k=\begin{bmatrix} x^k \\ \lambda^k \end{bmatrix}, \quad \mathcal{G}_F=\begin{bmatrix} U & \textbf{0} \\ \textbf{0} & \frac{\alpha}{\beta} I \end{bmatrix}.$ For any $0<\eta_1<2m$ and  $0<\tilde{\delta}_F<\delta_F$, if $ \alpha<\min\Big\lbrace{\frac{1}{L^2/\eta_1+\rho(B)}, \frac{(1+\tilde{\delta}_F )^{\frac{1}{2(T-1)}}-1}{L+\rho(B)\big((1+\tilde{\delta}_F )^{\frac{1}{2(T-1)}}-1\big)}, \frac{(1+\tilde{\delta}_F )^{\frac{1}{T-1}}-1}{\beta \rho(AA^\prime)}\Big\rbrace}$ and $\beta<\frac{2m-\eta_1}{\rho(A^\prime A)}$ we have
\[\norm{z^{k+1}-z^*}^2_{\mathcal{G}_F}\leq\frac{\Gamma_F^{T-1}}{1+\tilde{\delta}_F }\norm{z^k-z^*}^2_{\mathcal{G}_F},\quad\mbox{with $\frac{\Gamma_F^{T-1}}{1+\tilde{\delta}_F }<1$,}\]
that is $\norm{z^k-z^*}_{\mathcal{G}_F}$ converges Q-linearly to $0$  and consequently $\norm{x^k-x^*}_U$ converges R-linearly to $0$.	
\end{theorem}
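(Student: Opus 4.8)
The plan is to collapse one outer iteration of \cref{alg:FlexPD-F} into the composition of two estimates I already have, and then show the resulting per-iteration factor is strictly below one. Since $x^{k+1,0}=x^k$ and $x^{k+1,T}=x^{k+1}$, I would read \cref{thm:delta-F} as a contraction from the $(T-1)$-th inner iterate to the final pair $(x^{k+1},\lambda^{k+1})$, and \cref{lemma:MBound-F} as a growth bound from $x^k$ to $x^{k+1,T-1}$ (with $\lambda^k$ held fixed in both coordinates of the weighted norm). Concatenating them and using the block form of $\mathcal{G}_F$, $\|z^{k+1}-z^*\|^2_{\mathcal{G}_F}=\|x^{k+1}-x^*\|^2_U+\frac{\alpha}{\beta}\|\lambda^{k+1}-\lambda^*\|^2\le \frac{1}{1+\delta_F}\big(\|x^{k+1,T-1}-x^*\|^2_U+\frac{\alpha}{\beta}\|\lambda^k-\lambda^*\|^2\big)\le \frac{\Gamma_F^{T-1}}{1+\delta_F}\|z^k-z^*\|^2_{\mathcal{G}_F}$. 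Because $\tilde{\delta}_F<\delta_F$, I then weaken $\frac{1}{1+\delta_F}$ to $\frac{1}{1+\tilde{\delta}_F}$, which is exactly the claimed inequality. Here the first stepsize bound and the $\beta$-bound are inherited from \cref{thm:delta-F}; note that $\alpha<\frac{1}{L^2/\eta_1+\rho(B)}<\frac{1}{\rho(B)}$ guarantees $U\succ \mathbf{0}$, so \cref{lemma:MBound-F} applies.

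It remains to prove $\frac{\Gamma_F^{T-1}}{1+\tilde{\delta}_F}<1$, equivalently $\Gamma_F<\gamma:=(1+\tilde{\delta}_F)^{1/(T-1)}$, where $T>1$ keeps the exponent finite. Since $B\succeq \mathbf{0}$ has $0$ as its smallest eigenvalue, $\rho(U)=1$ and $\rho(U^{-1})=(1-\alpha\rho(B))^{-1}$, so the base appearing in the second entry of the max defining $\Gamma_F$ equals $\sqrt{\rho(U)}+\alpha L\sqrt{\rho(U^{-1})}=1+\frac{\alpha L}{\sqrt{1-\alpha\rho(B)}}$. The second stepsize bound rearranges to $1+\frac{\alpha L}{1-\alpha\rho(B)}<(1+\tilde{\delta}_F)^{1/(2(T-1))}=\gamma^{1/2}$, and since $\sqrt{1-\alpha\rho(B)}>1-\alpha\rho(B)$ this forces the base itself below $\gamma^{1/2}$, hence $\big(\sqrt{\rho(U)}+\alpha L\sqrt{\rho(U^{-1})}\big)^2<\gamma$. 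The third bound gives $\alpha\beta\rho(AA')<\gamma-1$, which governs the first entry $1+\frac{p}{p-1}\alpha\beta\rho(AA')$. I would then select $p>1$ so that both entries of the max simultaneously fall below $\gamma$.

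The genuinely technical step — and the one I expect to be the main obstacle — is exactly this choice of $p$. The two entries of $\Gamma_F$ depend oppositely on $p$: the constraint term $1+\frac{p}{p-1}\alpha\beta\rho(AA')$ decreases in $p$, while the gradient term $p\big(\sqrt{\rho(U)}+\alpha L\sqrt{\rho(U^{-1})}\big)^2$ increases in $p$. Consequently, controlling each entry at its own most favorable $p$ is not sufficient; one must exhibit a single admissible $p$ lying in the intersection of $\{p>1:\ p(\text{base})^2<\gamma\}$ and $\{p>1:\ 1+\frac{p}{p-1}\alpha\beta\rho(AA')<\gamma\}$, and check that this window is nonempty under precisely the stated conditions. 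This is where the two distinct exponents $\frac{1}{2(T-1)}$ and $\frac{1}{T-1}$ play their role (the former leaving the multiplicative slack needed to accommodate $p$), whereas everything else in the argument is direct substitution into \cref{lemma:MBound-F} and \cref{thm:delta-F}.

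Finally, $\mathcal{G}_F$ is block diagonal with $U\succ\mathbf{0}$ and $\frac{\alpha}{\beta}I\succ\mathbf{0}$, so $\mathcal{G}_F\succ\mathbf{0}$ and $\|\cdot\|_{\mathcal{G}_F}$ is a bona fide norm. The per-iteration factor $\frac{\Gamma_F^{T-1}}{1+\tilde{\delta}_F}<1$ therefore yields Q-linear convergence of $\|z^k-z^*\|_{\mathcal{G}_F}$ to $0$. Since $\|x^k-x^*\|_U^2\le \|z^k-z^*\|^2_{\mathcal{G}_F}$, the quantity $\|x^k-x^*\|_U$ is dominated by a geometrically decaying sequence and hence converges R-linearly, completing the proof.
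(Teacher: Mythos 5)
Your proposal follows the paper's own proof essentially line for line: the chain of inequalities (apply \cref{thm:delta-F} to pass from $(x^{k+1,T-1},\lambda^k)$ to $(x^{k+1,T},\lambda^{k+1})$, then apply \cref{lemma:MBound-F} to pass back to $(x^k,\lambda^k)$ with the factor $\Gamma_F^{T-1}$, and weaken $\frac{1}{1+\delta_F}$ to $\frac{1}{1+\tilde{\delta}_F}$) is exactly the paper's opening display, and your verification that $\Gamma_F^{T-1}<1+\tilde{\delta}_F$ rests on the same two observations about the two branches of the max, including the identification $\rho(U)=1$, $\rho(U^{-1})=(1-\alpha\rho(B))^{-1}$ and the replacement of $\sqrt{1-\alpha\rho(B)}$ by $1-\alpha\rho(B)$ to recover the stated second bound on $\alpha$.

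The one point where you stop short --- exhibiting a single $p>1$ that puts both branches of $\Gamma_F$ below $\gamma:=(1+\tilde{\delta}_F)^{1/(T-1)}$ simultaneously --- is precisely the point the paper also does not settle: it treats the cases $\Gamma_F=1+\frac{p\alpha\beta\rho(AA^\prime)}{p-1}$ and $\Gamma_F=p\big(\sqrt{\rho(U)}+\alpha L\sqrt{\rho(U^{-1})}\big)^2$ separately and derives, for each, an $\alpha$-condition under which the admissible set of $p$ for that branch alone is nonempty. Your worry is therefore warranted rather than answered by the paper. Concretely, writing $R=\frac{\gamma-1}{\alpha\beta\rho(AA^\prime)}$ and $b=\sqrt{\rho(U)}+\alpha L\sqrt{\rho(U^{-1})}\geq 1$, the first branch needs $p>\frac{R}{R-1}$ and the second needs $p<\gamma/b^{2}$; the stated stepsize bounds only guarantee $R>1$ and $b^{2}<\gamma$, and when $\alpha$ sits close to the third bound one has $R\downarrow 1$, so the lower threshold $\frac{R}{R-1}$ blows up while the upper threshold is capped by $\gamma$, and the two windows need not intersect. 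Closing this requires the additional (and easily satisfied for smaller $\alpha$) condition $\frac{R}{R-1}<\gamma/b^{2}$, or equivalently fixing $p$ first and stating the $\alpha$-bounds in terms of that $p$. Apart from this shared soft spot, your argument --- including $\mathcal{G}_F\succ\mathbf{0}$ and the deduction of R-linear convergence of $\norm{x^k-x^*}_U$ from the Q-linear decay of $\norm{z^k-z^*}_{\mathcal{G}_F}$ --- matches the paper's.
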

\begin{proof}
We note that with $0<\eta_1<2m$ , $\alpha<\frac{1}{L^2/\eta_1+\rho(B)}$, and $\beta<\frac{2m-\eta_1}{\rho(A^\prime A)}$ the result of \cref{thm:delta-F} holds and we have for every $\tilde{\delta}_F <\delta_F$
\begin{align*}   &\norm{x^{k+1,T}-x^*}^2_{U} + \frac{\alpha }{\beta }\norm{\lambda^{k+1}-\lambda^*}^2\leq  \frac{1}{1+\tilde{\delta}_F }\Big(\norm{x^{k+1,T-1}-x^*}^2_{U}
	\\&+ \frac{\alpha }{\beta }\norm{\lambda^k-\lambda^*}^2 \Big)\leq  \frac{\Gamma_F^{T-1}}{1+\tilde{\delta}_F }\left(\norm{x^{k}-x^*}^2_{U}
	+ \frac{\alpha }{\beta }\norm{\lambda^k-\lambda^*}^2 \right),
\end{align*}
where the second inequality is based on the result of \cref{lemma:MBound-F}.
Finally, for $T>1$ we need to show that $\frac{\Gamma_F^{T-1}}{1+\tilde{\delta}_F }<1.$
We note that from  \cref{lemma:MBound-F}, we have
\[\Gamma_F=\max\Big\lbrace{1+\frac{p\alpha\beta\rho(AA^\prime)}{p-1}, p\Big(\sqrt{\rho(U)}+\alpha L\sqrt{\rho(U^{-1})}\Big)^2 \Big\rbrace}.\]
If $\Gamma_F=1+\frac{p\alpha\beta\rho(AA^\prime)}{p-1}$, we need $\frac{\big(1+\frac{p\alpha\beta\rho(AA^\prime)}{p-1}\big)^{T-1}}{1+\tilde{\delta}_F }<1,$
which is equivalent to $1+\frac{p\alpha\beta\rho(AA^\prime)}{p-1}<(1+\tilde{\delta}_F )^{\frac{1}{T-1}}.$
Hence, we need
$\frac{p}{p-1}<\frac{(1+\tilde{\delta}_F )^{\frac{1}{T-1}}-1}{\alpha\beta\rho(AA^\prime)}.$
We also have $p>1$ and hence $\frac{p}{p-1}>1$. Therefore, we need to choose $\alpha$ such that 
the upper bound on $\frac{p}{p-1}$ is greater than one, i.e., $\alpha<\frac{(1+\tilde{\delta}_F )^{\frac{1}{T-1}}-1}{\beta\rho(AA^\prime)}.$
We next consider the case when $\Gamma_F=p\Big(\sqrt{\rho(U)}+\alpha L\sqrt{\rho(U^{-1})}\Big)^2$. In this case, we need 
$\frac{\Big(p\Big(\sqrt{\rho(U)}+\alpha L\sqrt{\rho(U^{-1})}\Big)^2\Big)^{T-1}}{1+\tilde{\delta}_F }<1,$
which is equivalent to
$p<\frac{(1+\tilde{\delta}_F )^{\frac{1}{T-1}}}{\Big(\sqrt{\rho(U)}+\alpha L\sqrt{\rho(U^{-1})}\Big)^2}.$
By considering the fact that $p>1$, we need to choose $\alpha$ such that the right hand side of the previous inequality is greater than one. i.e.,
$\Big(\sqrt{\rho(U)}+\alpha L\sqrt{\rho(U^{-1})}\Big)^2<(1+\tilde{\delta}_F )^{\frac{1}{T-1}}.$
By taking square root from both sides of the above inequality and by replacing $\rho(U)$ and $\rho(U^{-1})$ by their upper bounds, we have
$1+\alpha L\sqrt{\frac{1}{1-\alpha\rho(B)}}<(1+\tilde{\delta}_F )^{\frac{1}{2(T-1)}}.$
We note that $0<1-\alpha\rho(B)\leq 1$ and thus $\sqrt{\frac{1}{1-\alpha\rho(B)}}<\frac{1}{1-\alpha\rho(B)}$. Therefore, we need 
$1+\frac{\alpha L}{1-\alpha\rho(B)}<(1+\tilde{\delta}_F )^{\frac{1}{2(T-1)}},$
which will be satisfied if $\alpha<\frac{(1+\tilde{\delta}_F )^{\frac{1}{2(T-1)}}-1}{L+\rho(B)\big((1+\tilde{\delta}_F )^{\frac{1}{2(T-1)}}-1\big)}.$
\end{proof}
\subsection{Convergence Analysis of FlexPD-G}\label{Gproof}
In order to analyze the convergence properties of FlexPD-G algorithm, we first rewrite the primal update in \cref{alg:FlexPD-G} in the following compact form  
\be 
x^{k+1,t}=x^{k+1,t-1}-\alpha \nabla f(x^{k+1,t-1})-\alpha A^\prime \lambda^k-\alpha Bx^{k}, \label{eq:G}
\ee
\ff{In addition to \cref{funcprop} and \cref{Bprop}, we adopt the following assumption on matrix $B$ in this section.
\begin{assumption} \label{ass:Bbound} The spectral radius of matrix $B$ is upper bounded by $m$, i.e., $\rho(B)<m$. 
\end{assumption}
We note that in our proposed form of augmented Lagrangian, the choice of matrix $B$ is flexible and we can scale it by any positive number such that it satisfies \cref{ass:Bbound}.}
We next proceed to prove the linear convergence rate for our proposed algorithm. In \cref{lemma:gradF-G}, \cref{lemma:strcvx-G}, and \cref{lemma:lambdabound-G} we prove some key relations that we use to establish an upper bound on the Lyapunov function in \cref{thm:delta-G}. We then combine this bound with the result of \cref{lemma:MBound-G} to establish the linear rate of convergence for the FlexPD-G algorithm in \cref{thm:linConv-G}.

\begin{lemma}\label{lemma:gradF-G} Consider the primal-dual iteration as in \cref{alg:FlexPD-G}, we have
	\begin{align*} &\alpha \big(\nabla f(x^{k+1,T-1})- \nabla f(x^*) \big)=\\&  (x^{k+1,T-1}-x^{k+1,T}) -\alpha B(x^k-x^*) -\alpha  A^\prime (\lambda^{k+1} -\lambda ^*)+\alpha\beta AA^\prime(x^{k+1,T}-x^*).\end{align*}
\end{lemma}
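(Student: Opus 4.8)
The plan is to mirror the derivation of \cref{lemma:gradF-F}, since this is the exact FlexPD-G analogue of that identity; the only structural change is that the FlexPD-G primal recursion \cref{eq:G} freezes the augmentation term at the stale iterate $x^k$ rather than at the running inner iterate, so the final relation will carry a $B(x^k-x^*)$ term instead of $B(x^{k+1,T}-x^*)$. First I would instantiate \cref{eq:G} at the last inner step $t=T$ and solve it for the scaled gradient, obtaining
\[\alpha \nabla f(x^{k+1,T-1}) = (x^{k+1,T-1}-x^{k+1,T}) - \alpha A^\prime \lambda^k - \alpha B x^k.\]
The point to note here, which distinguishes this from FlexPD-F, is that no $I-\alpha B$ factor is produced in front of the primal difference, precisely because the $B$-term uses $x^k$.

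Next I would eliminate the dual iterate $\lambda^k$ using the dual update \cref{eq:lambda}, i.e. $\lambda^k = \lambda^{k+1} - \beta A x^{k+1,T}$ (recall the algorithm sets $x^{k+1}=x^{k+1,T}$). Substituting this in and collecting terms gives
\[\alpha \nabla f(x^{k+1,T-1}) = (x^{k+1,T-1}-x^{k+1,T}) - \alpha A^\prime \lambda^{k+1} + \alpha\beta A^\prime A\, x^{k+1,T} - \alpha B x^k,\]
which is where the quadratic term $\alpha\beta A^\prime A\, x^{k+1,T}$ enters, matching the last term of the claim.

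Finally I would invoke the KKT conditions \cref{kkt}. Using $\nabla f(x^*)+A^\prime\lambda^*=0$ together with $Ax^*=0$ and $Bx^*=0$, the two added contributions vanish at the optimum, so I can write the optimality relation in the matching form
\[\alpha\nabla f(x^*) = -\alpha A^\prime \lambda^* + \alpha\beta A^\prime A\, x^* - \alpha B x^*.\]
Subtracting this from the previous display collapses each pair into a difference and delivers the stated identity.

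I do not anticipate a genuine obstacle: the computation is a direct substitution followed by one subtraction. The only thing to watch is the bookkeeping, namely keeping the $B(x^k-x^*)$ term attached to the old iterate $x^k$ rather than prematurely replacing it by $x^{k+1,T}$, since it is exactly this staleness that will later propagate through \cref{lemma:strcvx-G} and \cref{lemma:lambdabound-G} and force the additional spectral requirement \cref{ass:Bbound} that is not needed in the FlexPD-F analysis.
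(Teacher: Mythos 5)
Your proposal is correct and follows essentially the same route as the paper's own proof: instantiate \cref{eq:G} at $t=T$, eliminate $\lambda^k$ via $\lambda^k=\lambda^{k+1}-\beta Ax^{k+1,T}$, and subtract the KKT relation written in the matching form. (You also correctly write the quadratic term as $\alpha\beta A^\prime A(x^{k+1,T}-x^*)$, which is what the paper's derivation actually produces; the $AA^\prime$ in the lemma statement is a typo.)
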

\begin{proof}
	Consider the primal update in Eq. \cref{eq:G} at iteration $k+1$ with $t=T$, we have
\[\alpha \nabla f(x^{k+1,T-1})  = x^{k+1,T-1}-x^{k+1,T} -\alpha B x^k-\alpha  A^\prime  \lambda^k.\]
	Moreover, we have for dual variable $\lambda^k$,
	$\lambda ^k = \lambda ^{k+1} - \beta  A x^{k+1,T}.$ We can substitute this expression for $\lambda^k$ into the previous equation and have
	\begin{align*} &\alpha \nabla f(x^{k+1,T-1})  = x^{k+1,T-1}-x^{k+1,T} -\alpha Bx^k-\alpha  A^\prime  (\lambda ^{k+1} - \beta  A x^{k+1,T}) \\&= (x^{k+1,T-1}-x^{k+1,T})-\alpha Bx^k + \alpha\beta A^\prime Ax^{k+1,T} -\alpha  A^\prime \lambda^{k+1}.\end{align*} 
By using the optimality conditions in Eq. \cref{kkt} we have
	$\alpha\nabla f(x^*)=-\alpha A^\prime \lambda^*+\alpha \beta  A^\prime Ax^*-\alpha Bx^*.$ By subtracting the previous two relations, we complete the proof.
\end{proof}
\begin{lemma}\label{lemma:MBound-G} Consider the primal-dual iteration as in \cref{alg:FlexPD-G}, we have
\begin{align*}
&\norm{x^{k+1,T-1}-x^*}^2
	+ \frac{\alpha }{\beta }\norm{\lambda^k-\lambda^*}^2+\alpha\rho(B)\norm{x^k-x^*}^2\leq\\& \Gamma_G^{T-1}\Big(c_1\norm{x^{k} -x^*}^2 
	+ \frac{\alpha }{\beta }\norm{\lambda^k-\lambda^*}^2\Big),
\end{align*}
 with  $c_1=1+\alpha\rho(B)$ and $\Gamma_G=\max\Big\lbrace{\bar{p}(1+
 \alpha L)^2, 1+\frac{\bar{p}\bar{q}\alpha\beta\rho(AA^\prime)}{\bar{p}-1}, 1+\frac{\bar{p}\bar{q}\alpha\rho(B)}{(\bar{p}-1)(\bar{q}-1)}\Big\rbrace}$ for any $\bar{p}, \bar{q}>1$.
\end{lemma}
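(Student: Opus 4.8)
The plan is to imitate the argument of \cref{lemma:MBound-F}, deriving a single-step recursion over the inner index $t$ and then unrolling it. First I would take the compact primal update \cref{eq:G} at a generic inner step $t$, subtract $x^*$, and add the zero quantity $\alpha\big(\nabla f(x^*)+A^\prime\lambda^*+Bx^*\big)=0$ furnished by the KKT conditions \cref{kkt} (recall $Bx^*=0$). This yields
\[x^{k+1,t}-x^* = \Big[(x^{k+1,t-1}-x^*)-\alpha\big(\nabla f(x^{k+1,t-1})-\nabla f(x^*)\big)\Big]-\alpha A^\prime(\lambda^k-\lambda^*)-\alpha B(x^k-x^*),\]
which is the analogue of the FlexPD-F identity, except that the penalty term $\alpha B(x^k-x^*)$ now carries the \emph{outer} iterate $x^k$ and is therefore constant over the inner loop.

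Next I would take squared norms and apply \cref{lemma:NormSquare} twice. The first application, with parameter $\bar p$, peels off the bracketed gradient-descent term as the second argument (so that it receives the factor $\bar p$); bounding it by the triangle inequality together with $L$-Lipschitz continuity of $\nabla f$ gives $\big\|(x^{k+1,t-1}-x^*)-\alpha(\nabla f(x^{k+1,t-1})-\nabla f(x^*))\big\|\le(1+\alpha L)\|x^{k+1,t-1}-x^*\|$, producing the coefficient $\bar p(1+\alpha L)^2$. The second application, with parameter $\bar q$, splits the remaining $-\alpha A^\prime(\lambda^k-\lambda^*)-\alpha B(x^k-x^*)$, assigning the dual part as the second argument (factor $\bar q$) and the penalty part as the first (factor $\tfrac{\bar q}{\bar q-1}$). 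Using $\|A^\prime v\|^2\le\rho(AA^\prime)\|v\|^2$ and $\|Bv\|^2\le\rho(B)^2\|v\|^2$ then gives
\[\|x^{k+1,t}-x^*\|^2\le \bar p(1+\alpha L)^2\|x^{k+1,t-1}-x^*\|^2+\tfrac{\bar p\bar q}{\bar p-1}\alpha^2\rho(AA^\prime)\|\lambda^k-\lambda^*\|^2+\tfrac{\bar p\bar q}{(\bar p-1)(\bar q-1)}\alpha^2\rho(B)^2\|x^k-x^*\|^2.\]

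Finally I would add the constant quantity $\tfrac{\alpha}{\beta}\|\lambda^k-\lambda^*\|^2+\alpha\rho(B)\|x^k-x^*\|^2$ to both sides and factor $\tfrac{\alpha}{\beta}$ and $\alpha\rho(B)$ out of the dual and penalty terms respectively; the three resulting bracket coefficients are exactly the three entries of the maximum defining $\Gamma_G$. Writing $V_t=\|x^{k+1,t}-x^*\|^2+\tfrac{\alpha}{\beta}\|\lambda^k-\lambda^*\|^2+\alpha\rho(B)\|x^k-x^*\|^2$, this shows $V_t\le\Gamma_G V_{t-1}$, hence $V_{T-1}\le\Gamma_G^{T-1}V_0$; since $x^{k+1,0}=x^k$ we have $V_0=(1+\alpha\rho(B))\|x^k-x^*\|^2+\tfrac{\alpha}{\beta}\|\lambda^k-\lambda^*\|^2=c_1\|x^k-x^*\|^2+\tfrac{\alpha}{\beta}\|\lambda^k-\lambda^*\|^2$, which is the claimed bound. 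The main obstacle is the bookkeeping around the outdated penalty term: because $Bx^k$ does not vary with $t$, it must be carried as a third, $t$-independent slot in $V_t$, and the two applications of \cref{lemma:NormSquare} must be oriented so that the dual term picks up $\bar q$ while the penalty term picks up $\tfrac{\bar q}{\bar q-1}$; reversing these roles would not reproduce the stated $\Gamma_G$, and it is precisely this constant term that accounts for both the extra summand $\alpha\rho(B)\|x^k-x^*\|^2$ on the left and the factor $c_1$ on the right.
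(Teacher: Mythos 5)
Your proposal is correct and follows essentially the same route as the paper: subtract $x^*$ from the update \cref{eq:G}, insert the KKT identity, apply \cref{lemma:NormSquare} twice with the dual and penalty terms oriented exactly as you describe, and unroll the resulting recursion in the three-slot Lyapunov quantity. The only cosmetic difference is that you bound the gradient-descent block by the triangle inequality plus Lipschitz continuity to get $(1+\alpha L)^2$ directly, whereas the paper introduces a third parameter $\bar r$ in \cref{lemma:NormSquare} and optimizes it to reach the same constant.
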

\begin{proof} Consider the primal update in Eq. \cref{eq:G} at $t=T-1$, by subtracting $x^*$ from both sides of this equality we have \[x^{k+1,T-1}-x^*=x^{k+1,T-2}-x^*-\alpha\nabla f(x^{k+1,T-2})-\alpha A^\prime \lambda^k-\alpha Bx^k.\]

By using the optimality condition in Eq. \cref{kkt}, we have 
$0=\alpha\big(\nabla f(x^*)+A^\prime\lambda^*+Bx^*\big).$
By adding the previous two relations, we obtain
\[x^{k+1,T-1}-x^*=x^{k+1,T-2}-x^*-\alpha\nabla \big(f(x^{k+1,T-2})-\nabla f(x^*)\big)-\alpha A^\prime (\lambda^k-\lambda^*)-\alpha B(x^k-x^*).\]
Hence, \begin{align*}&\norm{x^{k+1,T-1}-x^*}^2=\\&
\norm{x^{k+1,T-2}-x^*-\alpha\nabla \big(f(x^{k+1,T-2})-\nabla f(x^*)\big)-\alpha A^\prime (\lambda^k-\lambda^*)-\alpha B(x^k-x^*)}^2.\end{align*}
By using the result of  \cref{lemma:NormSquare}, and Lipschitz continuity property of $\nabla f$, for any $\bar{p},  \bar{q}, \bar{r}>1$ we have 
\begin{align*}&\norm{x^{k+1,T-1}-x^*}^2\leq\bar{p}\norm{x^{k+1,T-2}-x^*-\alpha\nabla \big(f(x^{k+1,T-2})-\nabla f(x^*)\big)}^2+\frac{\bar{p}}{\bar{p}-1}\\&\norm{\alpha A^\prime (\lambda^k-\lambda^*)-\alpha B(x^k-x^*)}^2\leq \bar{p}\big(\frac{\bar{r}}{\bar{r}-1}+\bar{r}\alpha^2L^2\big)\norm{x^{k+1,T-2}-x^*}^2\\&+\frac{\bar{p}}{\bar{p}-1}\Big(\bar{q}\alpha^2\rho(AA^\prime)\norm{\lambda^k-\lambda^*}^2+\frac{\bar{q}\alpha^2\rho^2(B)}{\bar{q}-1}\norm{x^k-x^*}^2\Big).\end{align*}
Since the above inequality holds for all $\bar{r}>1$, we can find the parameter $\bar{r}$ that makes the right hand side smallest and provides tightest upper bound.  The term $\frac{\bar{r}}{\bar{r}-1}+\bar{r}\alpha^2 L^2$ is convex in $\bar{r}$ and to minimize it we set derivative to 0 and have 
$ \bar{r} = 1+ \frac{1}{\alpha L}.$
Therefore, $\frac{\bar{r}}{\bar{r}-1}+\bar{r}\alpha^2 L^2=(1+\alpha L)^2.$
Hence, we have
\begin{align*}&\norm{x^{k+1,T-1}-x^*}^2\leq\\& \bar{p}(1+\alpha L)^2\norm{x^{k+1,T-2}-x^*}^2+\frac{\bar{p}\bar{q}\alpha^2\rho(AA^\prime)}{\bar{p}-1}\norm{\lambda^k-\lambda^*}^2+\frac{\bar{p}\bar{q}\alpha^2\rho^2(B)}{(\bar{q}-1)(\bar{p}-1)}\norm{x^k-x^*}^2.\end{align*}
By adding $\frac{\alpha}{\beta}\norm{\lambda^k-\lambda^*}^2$ and $\alpha\rho(B)\norm{x^k-x^*}$ to both sides of the previous inequality, we have
\begin{align*}&\norm{x^{k+1,T-1}-x^*}^2+\frac{\alpha}{\beta}\norm{\lambda^k-\lambda^*}^2+\alpha\rho(B)\norm{x^k-x^*}^2\leq \\&\bar{p}(1+\alpha L)^2\norm{x^{k+1,T-2}-x^*}^2+\big(1+\frac{\bar{p}\bar{q}\alpha\beta\rho(AA^\prime)}{\bar{p}-1}\big)\frac{\alpha}{\beta}\norm{\lambda^k-\lambda^*}^2\\&+\big(1+\frac{\bar{p}\bar{q}\alpha\rho(B)}{(\bar{q}-1)(\bar{p}-1)}\big)\alpha\rho(B)\norm{x^k-x^*}^2.\end{align*}
We can write the previous inequality as follows
\be\begin{aligned}\label{eq:newMBound-G}&\norm{x^{k+1,T-1}-x^*}^2+\frac{\alpha}{\beta}\norm{\lambda^k-\lambda^*}^2+\alpha\rho(B)\norm{x^k-x^*}^2\\&\leq \Gamma_G\Big( \norm{x^{k+1,T-2}-x^*}^2+\frac{\alpha}{\beta}\norm{\lambda^k-\lambda^*}^2+\alpha\rho(B)\norm{x^k-x^*}^2\Big),\end{aligned}\ee
with $\Gamma_G=\max\Big\lbrace{\bar{p}(1+
 \alpha L)^2, 1+\frac{\bar{p}\bar{q}\alpha\beta\rho(AA^\prime)}{\bar{p}-1}, 1+\frac{\bar{p}\bar{q}\alpha\rho(B)}{(\bar{p}-1)(\bar{q}-1)}\Big\rbrace}.$
By applying inequality \cref{eq:newMBound-G} recursively we obtain the result.
\end{proof}
\begin{lemma}\label{lemma:strcvx-G} Consider the primal-dual iteration in \cref{alg:FlexPD-G}, for any $\eta_2$, $\eta_3>0$ we have
\begin{align*} &\norm{x^{k+1,T}-x^*}^2_{P_G}+c_2\norm{x^{k+1,T}-x^{k+1,T-1}}^2
+c_3\norm{x^k-x^*}^2
\leq\Vert x^{k+1,T-1}-\\&x^*\Vert^2-c_1\norm{x^{k+1,T}-x^*}^2+\frac{\alpha}{\beta}\norm{\lambda^k-\lambda^*}^2-\frac{\alpha}{\beta}\norm{\lambda^{k+1}-\lambda^*}^2+\alpha\rho(B)\norm{x^k-x^*}^2,\end{align*}
with $P_G=2\alpha mI-\alpha(\eta_2+\eta_3)I-\alpha\beta A^\prime A-\alpha\rho(B)$, $c_1=1+\alpha\rho(B)$, $c_2=1-\frac{\alpha L^2}{\eta_2}$, and  $c_3=\alpha\rho(B)(1-\frac{\rho(B)}{\eta_3})$.
\end{lemma}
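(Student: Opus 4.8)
The plan is to mirror the argument for \cref{lemma:strcvx-F}, adapting it to the FlexPD-G iteration \cref{eq:G}, where the weight matrix $U$ is replaced by the identity and the $B$-term is evaluated at the outdated iterate $x^k$. First I would invoke the $m$-strong convexity of $f$ to write $2\alpha m\norm{x^{k+1,T}-x^*}^2\leq 2\alpha(x^{k+1,T}-x^*)^\prime(\nabla f(x^{k+1,T})-\nabla f(x^*))$. Adding and subtracting $\nabla f(x^{k+1,T-1})$ splits the right-hand side into two inner products, and into the second I would substitute the expression for $\alpha(\nabla f(x^{k+1,T-1})-\nabla f(x^*))$ supplied by \cref{lemma:gradF-G}.

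Next I would bound each resulting piece. The inner product $2\alpha(x^{k+1,T}-x^*)^\prime(\nabla f(x^{k+1,T})-\nabla f(x^{k+1,T-1}))$ is handled by Young's inequality with parameter $\eta_2$ followed by Lipschitz continuity of $\nabla f$, producing an $\alpha\eta_2\norm{x^{k+1,T}-x^*}^2$ term and the $\frac{\alpha L^2}{\eta_2}\norm{x^{k+1,T}-x^{k+1,T-1}}^2$ term that defines $c_2$. The term $2(x^{k+1,T}-x^*)^\prime(x^{k+1,T-1}-x^{k+1,T})$ is rewritten by the (now unweighted) polarization identity as $\norm{x^{k+1,T-1}-x^*}^2-\norm{x^{k+1,T}-x^*}^2-\norm{x^{k+1,T}-x^{k+1,T-1}}^2$. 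For the dual term I would use the dual update \cref{eq:lambda} together with $Ax^*=0$ to obtain $\alpha(x^{k+1,T}-x^*)^\prime A^\prime(\lambda^{k+1}-\lambda^*)=\frac{\alpha}{\beta}(\lambda^{k+1}-\lambda^k)^\prime(\lambda^{k+1}-\lambda^*)$, apply the polarization identity in the dual variables, and then substitute $\norm{\lambda^{k+1}-\lambda^k}^2=\beta^2(x^{k+1,T}-x^*)^\prime A^\prime A(x^{k+1,T}-x^*)$; combining this with the $A^\prime A$ term carried over from \cref{lemma:gradF-G} gives the net $-\alpha\beta A^\prime A$ contribution in $P_G$.

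The genuinely new ingredient, and the main obstacle, is the cross term $-2\alpha(x^{k+1,T}-x^*)^\prime B(x^k-x^*)$ created by the outdated value $x^k$ inside the $B$-term; this has no counterpart in the FlexPD-F analysis. I would control it with a second Young's inequality, with parameter $\eta_3$, bounding it by $\alpha\eta_3\norm{x^{k+1,T}-x^*}^2+\frac{\alpha}{\eta_3}\norm{B(x^k-x^*)}^2$ and then using $\norm{B(x^k-x^*)}^2\leq\rho(B)^2\norm{x^k-x^*}^2$. The difficulty is that the bound now couples to the previous iterate $x^k$, which must be retained so it can telescope in the later Lyapunov argument; this is exactly the purpose of the bookkeeping term $\alpha\rho(B)\norm{x^k-x^*}^2$.

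Finally I would collect terms. The coefficients of $\norm{x^{k+1,T}-x^*}^2$, namely $2\alpha m$, $-\alpha\eta_2$, $-\alpha\eta_3$, $-\alpha\beta A^\prime A$, together with the $+\norm{x^{k+1,T}-x^*}^2$ produced by the polarization identity, assemble precisely into $P_G+c_1 I$, so that $\norm{x^{k+1,T}-x^*}^2_{P_G}+c_1\norm{x^{k+1,T}-x^*}^2$ appears on the left; the coefficient $1-\frac{\alpha L^2}{\eta_2}$ of $\norm{x^{k+1,T}-x^{k+1,T-1}}^2$ is $c_2$. To produce the stated $\norm{x^k-x^*}^2$ terms I would add $c_3\norm{x^k-x^*}^2$ to both sides and use the identity $c_3+\frac{\alpha\rho(B)^2}{\eta_3}=\alpha\rho(B)$, which is immediate from $c_3=\alpha\rho(B)(1-\frac{\rho(B)}{\eta_3})$, converting the $\frac{\alpha\rho(B)^2}{\eta_3}\norm{x^k-x^*}^2$ left over from the $\eta_3$ step into the clean $\alpha\rho(B)\norm{x^k-x^*}^2$ on the right. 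Rearranging the dual terms exactly as in \cref{lemma:strcvx-F} finishes the proof. I note that, unlike the FlexPD-F case, this argument never inverts $U$, so no restriction such as $\alpha<\frac{1}{\rho(B)}$ is needed here; positivity of $c_2$ and $c_3$ (and hence the usefulness of the bound) is deferred to the stepsize and $\eta_2,\eta_3$ conditions imposed later.
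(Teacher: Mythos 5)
Your proposal is correct and follows essentially the same route as the paper's proof: strong convexity plus the add-and-subtract of $\nabla f(x^{k+1,T-1})$, substitution from \cref{lemma:gradF-G}, Young's inequalities with $\eta_2$ and $\eta_3$ (the latter handling the outdated $B(x^k-x^*)$ cross term exactly as the paper does), the polarization identities, the $\norm{\lambda^{k+1}-\lambda^k}^2=\beta^2\norm{x^{k+1,T}-x^*}^2_{A^\prime A}$ substitution, and the same $\pm\alpha\rho(B)$ bookkeeping that produces $c_1$ and $c_3$. Your side observation that no $\alpha<\frac{1}{\rho(B)}$ restriction is needed here is also consistent with the paper's statement of the lemma.
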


\begin{proof}
	From the strong convexity of $f(x)$, we have 
	\begin{align*}&2\alpha m\norm{x^{k+1,T}-x^*}^2\leq2\alpha\times  (x^{k+1,T}-x^*)^\prime  \big(\nabla f(x^{k+1,T}) - \nabla f(x^*)\big)=2\alpha  (x^{k+1,T}-x^*)^\prime \\&\big(\nabla f(x^{k+1,T})-\nabla f(x^{k+1,T-1})\big) +2\alpha (x^{k+1,T}-x^*)^\prime  \big(\nabla f(x^{k+1,T-1})-\nabla f(x^*)\big),\end{align*} where we add and subtract a term $(x^{k+1,T}-x^*)^\prime  \nabla f(x^{k+1,T-1})$.
	We can now substitute the equivalent expression of $ \alpha (\nabla f(x^{k+1,T-1})-\nabla f(x^*) )$ from \cref{lemma:gradF-G} and have
		\be\begin{aligned}\label{eq:update1-G}&2\alpha  m \norm{x^{k+1,T}-x^*}^2\leq 2\alpha  (x^{k+1,T}-x^*)^\prime  \big(\nabla f(x^{k+1,T})-\nabla f(x^{k+1,T-1})\big)\\&+2\alpha\beta  (x^{k+1,T}-x^*)^\prime   A^\prime A (x^{k+1,T}-x^*)+ 2(x^{k+1,T}-x^*)^\prime  (x^{k+1,T-1}-x^{k+1,T}) \\& -2\alpha (x^{k+1,T}-x^*)^\prime   A^\prime (\lambda^{k+1} -\lambda ^*)-2\alpha(x^{k+1,T}-x^*)^\prime B(x^k-x^*). \end{aligned}\ee
		We also have by Young's inequality, 
		 for all $\eta_2>0$, 
		\begin{align*}&2\alpha  (x^{k+1,T}-x^*)^\prime  \big(\nabla f(x^{k+1,T})-\nabla f(x^{k+1,T-1})\big) \leq \\&\alpha \eta_2 \norm{x^{k+1,T}-x^*}^2+\frac{\alpha }{\eta_2}\norm{\nabla f(x^{k+1,T})-\nabla f(x^{k+1,T-1})}^2.\end{align*}
By Lipschitz property of $\nabla f$ we have
 \begin{align*}&2\alpha  (x^{k+1,T}-x^*)^\prime  \big(\nabla f(x^{k+1,T})-\nabla f(x^{k+1,T-1})\big) \leq\\&\alpha \eta_2 \norm{x^{k+1,T}-x^*}^2+\frac{\alpha  L^2 }{\eta_2}\norm{x^{k+1,T}-x^{k+1,T-1}}^2.\end{align*}

Similarly, for any $\eta_3>0$, we have
$-2\alpha(x^{k+1,T}-x^*)^\prime B(x^k-x^*)=2\alpha(x^*-x^{k+1,T})^\prime B(x^k-x^*)\leq\alpha\eta_3\norm{x^{k+1,T}-x^*}^2+\frac{\alpha\rho^2(B)}{\eta_3}\norm{x^k-x^*}^2.$
By the dual update Eq. \cref{eq:lambda} and the feasibility of $x^*$, we have 
$Ax^{k+1} = \frac{1}{\beta }(\lambda^{k+1}-\lambda^k)$ and $Ax^* = 0.$
These two equations combined yields
$\alpha (x^{k+1}-x^*)^\prime  A^\prime (\lambda^{k+1} -\lambda ^*) = \frac{\alpha }{\beta }(\lambda^{k+1}-\lambda^k)^\prime (\lambda^{k+1}-\lambda^*).$ We also have  $-2(x^{k+1,T}-x^*)^\prime(x^{k+1,T}-x^{k+1,T-1})=\Vert x^{k+1,T-1}-x^*\Vert^2 -\norm{x^{k+1,T} -x^*}^2-\norm{x^{k+1,T}-x^{k+1,T-1}}^2$
and similarly $-2\frac{\alpha }{\beta }(\lambda^{k+1}-\lambda^k)^\prime  (\lambda^{k+1}-\lambda^*) = \frac{\alpha }{\beta }\Big(\norm{\lambda^k-\lambda^*}^2 - \norm{\lambda^{k+1}-\lambda^*}^2-\norm{\lambda^{k+1}-\lambda^k}^2\Big).$
Now we combine the preceding three relations and Eq. \cref{eq:update1-G} to obtain
\begin{align*}&2\alpha m\norm{x^{k+1,T}-x^*}^2\leq\alpha\eta_2\norm{x^{k+1,T}-x^*}^2+\frac{\alpha L^2}{\eta_2}\norm{x^{k+1,T}-x^{k+1,T-1}}^2\\+&2\alpha\beta(x^{k+1,T}-x^*)^\prime A^\prime A(x^{k+1,T}-x^*)+\norm{x^{k+1,T-1}-x^*}^2-\norm{x^{k+1,T}-x^*}^2\\&-\norm{x^{k+1,T}-x^{k+1,T-1}}^2
+\frac{\alpha}{\beta}\norm{\lambda^k-\lambda^*}^2-\frac{\alpha}{\beta}\norm{\lambda^{k+1}-\lambda^*}^2-\frac{\alpha}{\beta}\norm{\lambda^{k+1}-\lambda^k}^2\\&+\alpha\eta_3\norm{x^{k+1,T}-x^*}^2+\frac{\alpha\rho^2(B)}{\eta_3}\norm{x^k-x^*}^2.\end{align*}
We now use Eq. \cref{eq:lambda} together with the fact that $Ax^*=0$ to obtain \[\norm{\lambda^{k+1}-\lambda^k}^2 =\beta ^2(x^{k+1}-x^*)^\prime (A^\prime A)(x^{k+1}-x^*).\] We substitute this relation in its preceding inequality and subtract $\alpha\rho(B)\norm{x^{k+1,T}-x^*}^2$ form and add $\alpha\rho(B)\norm{x^k-x^*}^2$ to its both sides. By rearranging the terms we obtain
\begin{align*} &(x^{k+1,T}-x^*)^\prime\big(2\alpha mI-\alpha(\eta_2+\eta_3)I-\alpha\beta A^\prime A-\alpha\rho(B)I\big)(x^{k+1,T}-x^*)\\&+(1-\frac{\alpha L^2}{\eta_2})\norm{x^{k+1,T}-x^{k+1,T-1}}^2
+\alpha\rho(B)(1-\frac{\rho(B)}{\eta_3})\norm{x^k-x^*}^2
\\&\leq\norm{x^{k+1,T-1}-x^*}^2-c_1\norm{x^{k+1,T}-x^*}^2+\frac{\alpha}{\beta}\norm{\lambda^k-\lambda^*}^2\\&-\frac{\alpha}{\beta}\norm{\lambda^{k+1}-\lambda^*}^2+\alpha\rho(B)\norm{x^k-x^*}^2\end{align*}
\end{proof}
\begin{lemma}\label{lemma:lambdabound-G}
Consider the primal-dual iteration as in \cref{alg:FlexPD-G}, for any $\bar{d}, \bar{c}, \bar{g}, \bar{e}>1$ we have
 \begin{align*}&\norm{\lambda^{k+1} -\lambda ^*}^2\leq\frac{\bar{d}\bar{c}}{\alpha^2s(AA^\prime)}\big(\bar{g}+\frac{\bar{g}\alpha^2L^2}{\bar{g}-1}\big)\norm{x^{k+1,T}-x^{k+1,T-1}}^2+\\&\frac{\bar{d}\bar{c}}{s(AA^\prime)(\bar{c}-1)}\big(\bar{e}\beta^2\rho^2(A^\prime A)+\frac{\bar{e} L^2}{\bar{e}-1}\big)\norm{x^{k+1,T}-x^*}^2+\frac{\bar{d}\rho^2(B)}{
 s(AA^\prime)(\bar{d}-1)}\norm{x^k-x^*}^2.\end{align*}
\end{lemma}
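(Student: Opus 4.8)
The plan is to mirror the proof of \cref{lemma:toCompare-F}, adapting it to the extra $-\alpha B(x^k-x^*)$ term that appears in \cref{lemma:gradF-G} because FlexPD-G uses the outdated iterate $x^k$ in the augmentation term rather than the current inner iterate. First I would rearrange the identity of \cref{lemma:gradF-G} to isolate the dual quantity,
\[\alpha A^\prime(\lambda^{k+1}-\lambda^*) = (x^{k+1,T-1}-x^{k+1,T}) - \alpha B(x^k-x^*) + \alpha\beta A^\prime A(x^{k+1,T}-x^*) - \alpha\big(\nabla f(x^{k+1,T-1})-\nabla f(x^*)\big).\]
Then, exactly as in the FlexPD-F case, I would add and subtract $\nabla f(x^{k+1,T})$ in the last term, splitting it as $\nabla f(x^{k+1,T-1})-\nabla f(x^{k+1,T})$ plus $\nabla f(x^{k+1,T})-\nabla f(x^*)$. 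This organizes the right-hand side into three natural groups: one collecting $x^{k+1,T-1}-x^{k+1,T}$ together with the gradient difference at consecutive inner iterates, one collecting $\alpha\beta A^\prime A(x^{k+1,T}-x^*)$ together with the gradient difference between $x^{k+1,T}$ and $x^*$, and the lone term $-\alpha B(x^k-x^*)$.

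Next I would apply \cref{lemma:NormSquare} in a nested fashion with the four free scalars. I would first peel off the $B(x^k-x^*)$ group using $\bar d$, so that this group carries the factor $\bar d/(\bar d-1)$ while the remaining two groups carry $\bar d$; then separate the two remaining groups using $\bar c$, the consecutive-iterate group carrying $\bar c$ and the $x^{k+1,T}-x^*$ group carrying $\bar c/(\bar c-1)$; and finally split each of those internally, the consecutive-iterate group with $\bar g$ and the $x^{k+1,T}-x^*$ group with $\bar e$. Each internal split is followed by the Lipschitz bound $\norm{\nabla f(u)-\nabla f(v)}\le L\norm{u-v}$ and the spectral bounds $\norm{A^\prime A\, y}\le\rho(A^\prime A)\norm{y}$ and $\norm{B y}\le\rho(B)\norm{y}$, which convert each group norm into a multiple of one of the three squared distances in the statement.

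The final step is the lower bound on the left-hand side. Since $\lambda^0=0$ and each dual update adds a vector in the column space of $A$, the iterate $\lambda^{k+1}-\lambda^*$ lies in the column space of $A$ and is therefore orthogonal to $\mathrm{Null}(A^\prime)$; hence $\norm{\alpha A^\prime(\lambda^{k+1}-\lambda^*)}^2\ge \alpha^2 s(AA^\prime)\norm{\lambda^{k+1}-\lambda^*}^2$, where $s(\cdot)$ is the second smallest eigenvalue. Dividing the accumulated upper bound by $\alpha^2 s(AA^\prime)$ then produces exactly the three coefficients in the claim. I expect the only delicate points to be bookkeeping: choosing the correct ordering of the two summands in each use of \cref{lemma:NormSquare} so that the $\bar d/(\bar d-1)$, $\bar c/(\bar c-1)$, $\bar e/(\bar e-1)$, and $\bar g/(\bar g-1)$ factors land on the intended groups, and justifying the orthogonality claim so that $s(AA^\prime)$, rather than the smallest (possibly zero) eigenvalue, governs the lower bound.
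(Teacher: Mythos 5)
Your proposal is correct and follows essentially the same route as the paper's own proof: rearrange the identity from \cref{lemma:gradF-G}, add and subtract $\nabla f(x^{k+1,T})$, apply \cref{lemma:NormSquare} in the same nested order with $\bar d$ isolating the $B(x^k-x^*)$ term, then $\bar c$, $\bar g$, $\bar e$ exactly as you assign them, and finish with the Lipschitz/spectral bounds and the lower bound $\norm{\alpha A^\prime(\lambda^{k+1}-\lambda^*)}^2\geq\alpha^2 s(AA^\prime)\norm{\lambda^{k+1}-\lambda^*}^2$ justified by $\lambda^k$ lying in the column space of $A$. No gaps.
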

\begin{proof}
We recall the following relation from  \cref{lemma:gradF-G},
	\begin{align*}&\alpha  A^\prime (\lambda^{k+1} -\lambda ^*)=  (x^{k+1,T-1}-x^{k+1,T}) \\&-\alpha B(x^k-x^*)+ \alpha\beta  A^\prime A (x^{k+1,T} - x^*) - \alpha \big(\nabla f(x^{k+1,T-1})- \nabla f(x^*)\big).\end{align*}	
We then add and subtract a term of $\nabla f(x^{k+1,T})$ to the right hand side of the above equality and take square norm of both sides to obtain
\begin{align*}&\norm{\alpha  A^\prime (\lambda^{k+1} -\lambda ^*)}^2= \big\Vert(x^{k+1,T-1}-x^{k+1,T}) -\alpha B(x^k-x^*)+ \alpha\beta  A^\prime A (x^{k+1,T} - x^*)  \\&-\alpha \big(\nabla f(x^{k+1,T-1})- \nabla f(x^{k+1,T})\big)-\alpha \big(\nabla f(x^{k+1,T})- \nabla f(x^*)\big)\big\Vert^2.\end{align*}
By applying the result of \cref{lemma:NormSquare} and by using the Lipschitz property of $\nabla f$, we have for any scalars $\bar{d}, \bar{c}, \bar{g}, \bar{e}>1$, 
\begin{align*}&\norm{\alpha  A^\prime (\lambda^{k+1} -\lambda ^*)}^2 \leq\frac{\bar{d}\alpha^2\rho^2(B)}{\bar{d}-1}\norm{x^k-x^*}^2+ \bar{d}\Bigg[\bar{c}\big\Vert (x^{k+1,T-1}-x^{k+1,T})-  \\&\alpha\big(\nabla f(x^{k+1,T-1}) -\nabla f(x^{k+1,T})\big) \big\Vert^2+\frac{\bar{c}}{\bar{c}-1}\big\Vert \alpha\beta  A^\prime A (x^{k+1,T} - x^*) -\\&\alpha \big(\nabla f(x^{k+1,T})-\nabla f(x^*)\big)\big\Vert ^2\Bigg]\leq\bar{d}\bar{c}\big(\bar{g}+\frac{\bar{g}\alpha^2L^2}{\bar{g}-1}\big)\norm{x^{k+1,T}-x^{k+1,T-1}}^2\\&+\frac{\bar{d}\bar{c}}{\bar{c}-1}\big(\bar{e}\alpha^2\beta^2\rho^2(A^\prime A)+\frac{\bar{e}\alpha^2 L^2}{\bar{e}-1}\big)\norm{x^{k+1,T}-x^*}^2+\frac{\bar{d}\alpha^2\rho^2(B)}{\bar{d}-1}\norm{x^k-x^*}^2.
\end{align*}

Since $\lambda^0=0$ and $\lambda^{k+1} = \lambda^k + \beta  Ax^{k+1,T}$, we have that $\lambda^k$ is in the column space of $A$ and hence orthogonal to the null space of $A^\prime $, hence we have
$\norm{\alpha  A^\prime (\lambda^{k+1} -\lambda ^*)}^2\geq \alpha ^2s(AA^\prime )\norm{\lambda^{k+1}-\lambda^*}^2$.
 By using this inequality and the Lipschitz property of $\nabla f(.)$, we have
 \begin{align*}&\norm{\lambda^{k+1} -\lambda ^*}^2\leq\frac{\bar{d}\bar{c}}{\alpha^2s(AA^\prime)}\big(\bar{g}+\frac{\bar{g}\alpha^2L^2}{\bar{g}-1}\big)\norm{x^{k+1,T}-x^{k+1,T-1}}^2+\frac{\bar{d}\bar{c}}{s(AA^\prime)(\bar{c}-1)}\\&\big(\bar{e}\beta^2\rho^2(A^\prime A)+\frac{\bar{e} L^2}{\bar{e}-1}\big)\norm{x^{k+1,T}-x^*}^2+\frac{\bar{d}\alpha^2\rho^2(B)}{\alpha^2s(AA^\prime)(\bar{d}-1)}\norm{x^k-x^*}^2.\end{align*}
\end{proof}
\begin{theorem}\label{thm:delta-G}
Consider the primal-dual iteration in \cref{alg:FlexPD-G} and recall the definition of $c_1=1+\alpha\rho(B)$. If we choose $\eta_2>0$ and $\eta_3>\rho(B)$ such that $\eta_2+\eta_3<2m-\rho(B)$, then for
  $\alpha<\frac{\eta_2}{L^2}$ and $\beta<\frac{2m-(\eta_2+\eta_3)-\rho(B)}{\rho(A^\prime A)},$ there exists $\delta_G>0$ such that 
\begin{align*}
&c_1\norm{x^{k+1,T}-x^*}^2+\frac{\alpha}{\beta}\norm{\lambda^{k+1}-\lambda^*}\leq\\&\frac{1}{1+\delta_G}\Big(\norm{x^{k+1,T-1}-x^*}^2+\frac{\alpha}{\beta}\norm{\lambda^{k}-\lambda^*}+\alpha\rho(B)\norm{x^k-x^*}^2\Big)
\end{align*}.
\end{theorem}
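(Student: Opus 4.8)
The plan is to follow the template of the FlexPD-F argument in \cref{thm:delta-F}, modified to carry the additional $\norm{x^k-x^*}^2$ contributions that originate from the stale penalty term $Bx^k$ in \cref{eq:G}. Abbreviate the left-hand side as $\Phi=c_1\norm{x^{k+1,T}-x^*}^2+\frac{\alpha}{\beta}\norm{\lambda^{k+1}-\lambda^*}^2$ and the parenthesized quantity on the right as $\Psi=\norm{x^{k+1,T-1}-x^*}^2+\frac{\alpha}{\beta}\norm{\lambda^k-\lambda^*}^2+\alpha\rho(B)\norm{x^k-x^*}^2$. The desired bound $\Phi\le \Psi/(1+\delta_G)$ is equivalent to $\delta_G\Phi\le\Psi-\Phi$. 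First I would observe that $\Psi-\Phi$ is precisely the right-hand side of \cref{lemma:strcvx-G}, so that lemma lower-bounds $\Psi-\Phi$ by $\norm{x^{k+1,T}-x^*}^2_{P_G}+c_2\norm{x^{k+1,T}-x^{k+1,T-1}}^2+c_3\norm{x^k-x^*}^2$. It therefore suffices to establish
\[\delta_G\Big(c_1\norm{x^{k+1,T}-x^*}^2+\tfrac{\alpha}{\beta}\norm{\lambda^{k+1}-\lambda^*}^2\Big)\le \norm{x^{k+1,T}-x^*}^2_{P_G}+c_2\norm{x^{k+1,T}-x^{k+1,T-1}}^2+c_3\norm{x^k-x^*}^2.\]

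Next I would eliminate the dual term on the left by invoking \cref{lemma:lambdabound-G}, which bounds $\norm{\lambda^{k+1}-\lambda^*}^2$ by a nonnegative combination of $\norm{x^{k+1,T}-x^{k+1,T-1}}^2$, $\norm{x^{k+1,T}-x^*}^2$, and $\norm{x^k-x^*}^2$. Substituting this and collecting like terms turns the displayed inequality into three separate sufficient conditions, one per quadratic: the coefficient of $\norm{x^{k+1,T}-x^{k+1,T-1}}^2$ must satisfy $\delta_G\frac{\alpha}{\beta}C_1\le c_2$; the coefficient of $\norm{x^k-x^*}^2$ must satisfy $\delta_G\frac{\alpha}{\beta}C_3\le c_3$; and the remaining matrix inequality $\big(\delta_G c_1+\delta_G\frac{\alpha}{\beta}C_2\big)I\preceq P_G$ must hold, where $C_1,C_2,C_3$ are the coefficients appearing in \cref{lemma:lambdabound-G}. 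Each condition yields an explicit upper bound on $\delta_G$, and I would take $\delta_G$ to be the minimum of the three. Since \cref{lemma:lambdabound-G} holds for all $\bar c,\bar d,\bar e,\bar g>1$, I would first minimize $C_1,C_2,C_3$ over these free parameters exactly as in the FlexPD-F proof---setting the relevant derivatives to zero to replace, e.g., $\bar g+\frac{\bar g\alpha^2L^2}{\bar g-1}$ by $(1+\alpha L)^2$---so as to make the resulting bounds on $\delta_G$ as large as possible and thereby maximize the admissible parameter range.

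The crux is then to show that the resulting three upper bounds on $\delta_G$ are all strictly positive under the stated hypotheses, i.e.\ that the parameter set is nonempty. Positivity of the first bound needs $c_2=1-\frac{\alpha L^2}{\eta_2}>0$, which is guaranteed by $\alpha<\eta_2/L^2$; positivity of the second needs $c_3=\alpha\rho(B)(1-\frac{\rho(B)}{\eta_3})>0$, i.e.\ $\eta_3>\rho(B)$; and the matrix condition requires $P_G\succ0$ with enough slack, for which I would bound $P_G\succeq\big(2m-(\eta_2+\eta_3)-\beta\rho(A^\prime A)-\rho(B)\big)\alpha I$ using $A^\prime A\preceq\rho(A^\prime A)I$, so that $\eta_2+\eta_3<2m-\rho(B)$ and $\beta<\frac{2m-(\eta_2+\eta_3)-\rho(B)}{\rho(A^\prime A)}$ make it strictly positive definite. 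The main obstacle I anticipate is verifying that the simultaneous constraints $\eta_2>0$, $\eta_3>\rho(B)$, and $\eta_2+\eta_3<2m-\rho(B)$ can be met at once: this is feasible exactly when $\rho(B)<m$, which is the content of \cref{ass:Bbound}. Thus \cref{ass:Bbound} is precisely what makes the admissible region for $(\eta_2,\eta_3,\alpha,\beta)$ nonempty, and once that is secured, choosing any $\delta_G$ below the minimum of the three positive bounds completes the argument.
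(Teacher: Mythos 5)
Your proposal is correct and follows essentially the same route as the paper's proof: lower-bound $\Psi-\Phi$ via \cref{lemma:strcvx-G}, eliminate $\norm{\lambda^{k+1}-\lambda^*}^2$ via \cref{lemma:lambdabound-G} after optimizing the free parameters $\bar g,\bar e$, split into three coefficient conditions yielding three upper bounds on $\delta_G$, and verify positivity from $\alpha<\eta_2/L^2$, $\eta_3>\rho(B)$, and $\beta<\frac{2m-(\eta_2+\eta_3)-\rho(B)}{\rho(A^\prime A)}$. Your explicit remark that \cref{ass:Bbound} ($\rho(B)<m$) is exactly what makes the $(\eta_2,\eta_3)$ region nonempty is a correct and slightly more explicit justification of the paper's closing claim that the parameter set is nonempty.
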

\begin{proof}
To show the result, we will show that
\begin{align*}
&\delta_G\Big(c_1\norm{x^{k+1,T}-x^*}^2+\frac{\alpha}{\beta}\norm{\lambda^{k+1}-\lambda^*}\Big)\leq\norm{x^{k+1,T-1}-x^*}^2+\frac{\alpha}{\beta}\\&\norm{\lambda^{k}-\lambda^*}+\alpha\rho(B)\norm{x^k-x^*}^2-c_1\norm{x^{k+1,T}-x^*}^2-\frac{\alpha}{\beta}\norm{\lambda^{k+1}-\lambda^*}.
\end{align*}
for some $\delta_G>0$. By comparing the above inequality to the result of \cref{lemma:strcvx-G}, it suffices to show that there exists a $\delta_G>0$ such that
\begin{align*}
&\delta_G\Big(c_1\norm{x^{k+1,T}-x^*}^2+\frac{\alpha}{\beta}\norm{\lambda^{k+1}-\lambda^*}\Big)\leq\\& \norm{x^{k+1,T}-x^*}^2_{P_G}+c_2\norm{x^{k+1,T}-x^{k+1,T-1}}^2
+c_3\norm{x^k-x^*}^2.
\end{align*}
We next collect the terms and focus on showing $\norm{\lambda^{k+1}-\lambda^*}^2\leq$
\begin{equation*}\begin{aligned}\label{ineq:linearRateToShow-G} &  \frac{\beta }{\delta_G\alpha }\norm{x^{k+1,T}-x^*}^2_{P_G-\delta_Gc_1I} +\frac{\beta c_2}{\delta_G\alpha}\norm{x^{k+1,T}-x^{k+1,T-1}}^2
+\frac{\beta c_3}{\delta_G\alpha}\norm{x^k-x^*}^2.\end{aligned}\end{equation*}
 We compare this with the result of  \cref{lemma:lambdabound-G}, and we need to have for some $\delta_G>0$
  \begin{align*}&\frac{\beta c_2}{\delta_G\alpha}\geq \frac{\bar{d}\bar{c}}{\alpha^2s(AA^\prime)}\big(\bar{g}+\frac{\bar{g}\alpha^2L^2}{\bar{g}-1}\big),\quad
  \frac{\beta c_3}{\alpha\delta_G}{\eta_3}\geq\frac{\bar{d}\rho^2(B)}{
 s(AA^\prime)(\bar{d}-1)},\\&\frac{\beta }{\delta_G\alpha }\big(P_G-\delta_Gc_1I\big)\succcurlyeq\frac{\bar{d}\bar{c}}{s(AA^\prime)(\bar{c}-1)}\big(\bar{e}\beta^2\rho^2(A^\prime A)+\frac{\bar{e} L^2}{\bar{e}-1}\big)I.
\end{align*}
for any $\bar{d}, \bar{c}, \bar{g}, \bar{e}>1$. We can find the parameters $\bar{g}$ and $\bar{e}$ to make the right hand side smallest, which would give us the most freedom to choose algorithm parameters.  The term $\frac{\bar{g}}{\bar{g}-1} \alpha^2L^2 +\bar{g}$ is convex in $\bar{g}$ and to minimize it we set derivative to 0 and have 
$ \bar{g} = 1+\alpha L.$
Similarly, we choose
$ \bar{e} =1 + \frac{L}{\beta\rho(A^\prime A)}.$
With these parameter choices, we have
$\bar{g}+\frac{\bar{g}\alpha^2L^2}{\bar{g}-1}=(1+\alpha L)^2$ and   $\bar{e}\beta^2\rho^2(A^\prime A)+\frac{\bar{e} L^2}{\bar{e}-1}=\big(\beta\rho(A^\prime A)+L\big)^2.
$
Now, by considering the definitions of $P_G$, $c_1$, $c_2$, and $c_3$ from \cref{lemma:strcvx-G}, the desired relation can be expressed as
\[\frac{\beta}{\delta_G\alpha}(1-\frac{\alpha L^2}{\eta_2})\geq \frac{\bar{d}\bar{c}}{\alpha^2s(AA^\prime)}(1+\alpha L)^2,\quad\frac{\beta\rho(B)}{\delta_G}(1-\frac{\rho(B)}{\eta_3})\geq\frac{\bar{d}\rho^2(B)}{
 s(AA^\prime)(\bar{d}-1)},\] and  \[\frac{\beta }{\delta_G\alpha }\Big(2\alpha m-\alpha(\eta_2+\eta_3)-\alpha\beta \rho(A^\prime A)-\alpha\rho(B)-\delta_G\big(1+\alpha \rho(B)\big)\Big)\geq\frac{\bar{d}\bar{c}}{s(AA^\prime)(\bar{c}-1)}\big(\beta\rho(A^\prime A)+L\big)^2.
\]
  We next solve the last three inequalities for $\delta_G$ and have
  \begin{align*}
  &\delta_G\leq\frac{\alpha\beta s(AA^\prime)(1-\frac{\alpha L^2}{\eta_2})}{\bar{d}\bar{c}(1+\alpha L)^2},\quad\delta_G\leq\frac{\beta s(AA^\prime)(\bar{d}-1)(1-\frac{\rho(B)}{\eta_3})}{\bar{d}\rho(B)},\\&\delta_G\leq\frac{\beta\big(2m-(\eta_2+\eta_3)-\beta\rho(A^\prime A)-\rho(B)\big)}{\frac{\bar{d}\bar{c}}{s(AA^\prime)(\bar{c}-1)}\big(\beta\rho(A^\prime A)+L\big)^2+\frac{\beta}{\alpha}\big(1+\alpha \rho(B)\big)}.
  \end{align*}
  The right hand side of the above inequalities are positive for $0<\eta_2+\eta_3<2m-\rho(B)$ and
  $\alpha<\frac{\eta_2}{L^2},\quad \rho(B)<\eta_3,\quad\beta<\frac{2m-(\eta_2+\eta_3)-\rho(B)}{\rho(A^\prime A)}.$ 
  The parameter set is nonempty and the proof is complete.
\end{proof}
\begin{theorem}\label{thm:linConv-G} Consider the primal-dual iteration in \cref{alg:FlexPD-G} with $T>1$, recall the definition of $\Gamma_G$, $c_1$, $\eta_2,\eta_3>0$, and $\delta_G$  from \cref{lemma:MBound-G} and \cref{thm:delta-G}, and define $ z^k=\begin{bmatrix} x^k \\ \lambda^k \end{bmatrix},$ and $\mathcal{G}_G=\begin{bmatrix} c_1I & \textbf{0} \\ \textbf{0} & \frac{\alpha}{\beta} I \end{bmatrix}.$ Then for any $\eta_2>0$ and $\eta_3>\rho(B)$ with $\eta_2+\eta_3<2m-\rho(B)$ and for any $0<\tilde{\delta}_G<\delta_G$, if  $\beta<\frac{2m-(\eta_2+\eta_3)-\rho(B)}{\rho(A^\prime A)}$ and $\alpha<\min\Big\lbrace{\frac{\eta_2}{L^2}, \frac{(1+\tilde{\delta}_G)^{\frac{1}{2(T-1)}}-1}{L}, \frac{(1+\tilde{\delta}_G)^{\frac{1}{T-1}}-1}{\beta \rho(AA^\prime)},\frac{(1+\tilde{\delta}_G)^{\frac{1}{T-1}}-1}{\rho(B)}\Big\rbrace}$, we have
\[\norm{z^{k+1}-z^*}^2_{\mathcal{G}_G}\leq\frac{\Gamma_F^{T-1}}{1+\tilde{\delta}_G}\norm{z^k-z^*}^2_{\mathcal{G}_G},\quad\mbox{with}\quad\frac{\Gamma_G^{T-1}}{1+\tilde{\delta}_G}<1.\]
that is $\norm{z^k-z^*}_{\mathcal{G}_G}$ converges Q-linearly to $0$  and consequently $\norm{x^k-x^*}$ converges R-linearly to $0$.
\end{theorem}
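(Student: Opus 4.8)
The plan is to follow the same two-step template used for \cref{thm:linConv-F}: combine the single-outer-iteration contraction of \cref{thm:delta-G} with the inner-iteration bound of \cref{lemma:MBound-G}, and then verify that the resulting contraction factor is strictly less than one. First I would check that the hypotheses of the theorem — namely $\eta_2>0$, $\eta_3>\rho(B)$ with $\eta_2+\eta_3<2m-\rho(B)$, $\beta<\frac{2m-(\eta_2+\eta_3)-\rho(B)}{\rho(A^\prime A)}$, and $\alpha<\frac{\eta_2}{L^2}$ (the first element of the $\min$ defining the admissible $\alpha$) — are exactly those required by \cref{thm:delta-G}. Hence for any $\tilde{\delta}_G<\delta_G$ that result gives
\[c_1\norm{x^{k+1,T}-x^*}^2+\frac{\alpha}{\beta}\norm{\lambda^{k+1}-\lambda^*}^2\leq\frac{1}{1+\tilde{\delta}_G}\Big(\norm{x^{k+1,T-1}-x^*}^2+\frac{\alpha}{\beta}\norm{\lambda^k-\lambda^*}^2+\alpha\rho(B)\norm{x^k-x^*}^2\Big).\]

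Next I would feed the bracketed right-hand side into \cref{lemma:MBound-G}, whose right-hand side is precisely $\Gamma_G^{T-1}\big(c_1\norm{x^k-x^*}^2+\frac{\alpha}{\beta}\norm{\lambda^k-\lambda^*}^2\big)$. Since $x^{k+1}=x^{k+1,T}$ and $c_1=1+\alpha\rho(B)$, the left-hand side of the displayed inequality equals $\norm{z^{k+1}-z^*}^2_{\mathcal{G}_G}$ and the final right-hand side equals $\Gamma_G^{T-1}\norm{z^k-z^*}^2_{\mathcal{G}_G}$, by the block-diagonal form of $\mathcal{G}_G$. Chaining the two estimates then yields $\norm{z^{k+1}-z^*}^2_{\mathcal{G}_G}\leq\frac{\Gamma_G^{T-1}}{1+\tilde{\delta}_G}\norm{z^k-z^*}^2_{\mathcal{G}_G}$, which is the claimed recursion (the $\Gamma_F$ in the statement being a typo for $\Gamma_G$).

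It then remains to prove the strict inequality $\frac{\Gamma_G^{T-1}}{1+\tilde{\delta}_G}<1$, equivalently $\Gamma_G<(1+\tilde{\delta}_G)^{\frac{1}{T-1}}$, and here is where the stepsize restrictions enter and where I expect the real work to lie. Since $\Gamma_G$ is a maximum of three terms depending on the free parameters $\bar p,\bar q>1$, I would drive each term below $(1+\tilde{\delta}_G)^{\frac{1}{T-1}}$, mirroring the casewise argument in \cref{thm:linConv-F}. For $\bar p(1+\alpha L)^2$, taking $\bar p$ near $1$ and using $\alpha<\frac{(1+\tilde{\delta}_G)^{\frac{1}{2(T-1)}}-1}{L}$ gives $1+\alpha L<(1+\tilde{\delta}_G)^{\frac{1}{2(T-1)}}$; for $1+\frac{\bar p\bar q\alpha\beta\rho(AA^\prime)}{\bar p-1}$ and $1+\frac{\bar p\bar q\alpha\rho(B)}{(\bar p-1)(\bar q-1)}$, pushing $\frac{\bar p}{\bar p-1}$ and $\frac{\bar q}{\bar q-1}$ toward $1$ makes the conditions $\alpha<\frac{(1+\tilde{\delta}_G)^{\frac{1}{T-1}}-1}{\beta\rho(AA^\prime)}$ and $\alpha<\frac{(1+\tilde{\delta}_G)^{\frac{1}{T-1}}-1}{\rho(B)}$ sufficient. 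The delicate point — the main obstacle — is that the three terms impose competing monotonicities on the parameters (the first favors $\bar p$ near $1$ while the latter two favor $\bar p$ large, and they disagree on whether $\bar q$ should be near $1$ or large), so one must exhibit a \emph{single} admissible pair $(\bar p,\bar q)$ that keeps all three terms simultaneously below the threshold; the strictness of the three stepsize bounds supplies exactly the slack needed to make such a joint choice. Finally, since $\mathcal{G}_G\succ\textbf{0}$, the contraction gives Q-linear convergence of $\norm{z^k-z^*}_{\mathcal{G}_G}$ to $0$, and because $c_1\norm{x^k-x^*}^2\leq\norm{z^k-z^*}^2_{\mathcal{G}_G}$, the iterates $\norm{x^k-x^*}$ converge R-linearly to $0$.
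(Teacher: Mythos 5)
Your proposal follows the paper's proof essentially step for step: the paper also chains the single-step contraction of \cref{thm:delta-G} with the recursion of \cref{lemma:MBound-G} (using $x^{k+1}=x^{k+1,T}$ and the block structure of $\mathcal{G}_G$), and then derives each of the three stepsize conditions on $\alpha$ by requiring the corresponding term in $\Gamma_G$ to fall below $(1+\tilde{\delta}_G)^{\frac{1}{T-1}}$. You are also right that the $\Gamma_F$ in the displayed inequality is a typo for $\Gamma_G$.

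The one place where your sketch leaves real work undone is exactly the point you flag: exhibiting a \emph{single} pair $(\bar p,\bar q)$ for which all three terms in $\Gamma_G$ lie below $(1+\tilde{\delta}_G)^{\frac{1}{T-1}}$ simultaneously. You assert that the strictness of the stepsize bounds supplies the needed slack, but this is not automatic. Writing $S=(1+\tilde{\delta}_G)^{\frac{1}{T-1}}$, the first term forces $\bar p<S/(1+\alpha L)^2$, while the second forces $\frac{\bar p\bar q}{\bar p-1}<\frac{S-1}{\alpha\beta\rho(AA^\prime)}$; if $\alpha$ sits close to $\frac{\sqrt{S}-1}{L}$ the first cap on $\bar p$ is barely above $1$, and if $\alpha\beta\rho(AA^\prime)$ is simultaneously close to $S-1$ the second condition forces $\bar p$ to be large, so under the stated hypotheses alone the two requirements can conflict. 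A complete argument would either quantify the gap between $\alpha$ and each of its three upper bounds and translate it into an explicit admissible $(\bar p,\bar q)$, or tighten the stepsize conditions so that a common choice manifestly exists. To be fair, the paper's own proof does not do this either: it argues case by case on which term attains the maximum, implicitly allowing $(\bar p,\bar q)$ to be chosen differently in each case, which is the same logical gap you identified but did not close. So your proposal is faithful to the paper, and is in fact more candid than the paper about where the delicate step lies, but neither resolves it.
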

\begin{proof}
We note that with $\eta_2+\eta_3<2m-\rho(B)$, $\eta_3>\rho(B)$ $\beta<\frac{2m-(\eta_2+\eta_3)-\rho(B)}{\rho(A^\prime A)}$, and $\alpha<\frac{\eta_2}{L^2}$ the result of \cref{thm:delta-G} holds and we have for every $\tilde{\delta}_G<\delta_G$
\begin{align*}   &c_1\norm{x^{k+1,T} -x^*}^2+ \frac{\alpha }{\beta }\norm{\lambda^{k+1}-\lambda^*}^2\leq  \frac{1}{1+\tilde{\delta}_G}\Big(\norm{x^{k+1,T-1} -x^*}^2 + \frac{\alpha }{\beta }
	\\& \norm{\lambda^k-\lambda^*}^2+\alpha\rho(B)\norm{x^k-x^*}^2\Big)\leq \frac{\Gamma_G^{T-1}}{1+\tilde{\delta}_G}\left(c_1\norm{x^{k} -x^*}^2 
	+ \frac{\alpha }{\beta }\norm{\lambda^k-\lambda^*}^2 \right),
\end{align*}
where we used the result of \cref{lemma:MBound-G} in deriving the second inequality.
Finally, for $T>1$ we need to show that $\frac{\Gamma_G^{T-1}}{1+\tilde{\delta}_G}<1.$
We note that from \cref{lemma:MBound-G}, we have
\[\Gamma_G=\max\Big\lbrace{\bar{p}(1+
 \alpha L)^2, 1+\frac{\bar{p}\bar{q}\alpha\beta\rho(AA^\prime)}{\bar{p}-1}, 1+\frac{\bar{p}\bar{q}\alpha\rho(B)}{(\bar{p}-1)(\bar{q}-1)}\Big\rbrace},\] with $\bar{p}, \bar{q}>1$.
If $\Gamma_G=\bar{p}(1+\alpha L)^2$, we need $\frac{\big(\bar{p}(1+\alpha L)^2\big)^{T-1}}{1+\tilde{\delta}_G}<1,$
which is equivalent to $\bar{p}(1+\alpha L)^2<(1+\tilde{\delta}_G)^{\frac{1}{T-1}}.$
Hence, we have
$\bar{p}<\frac{(1+\tilde{\delta}_G)^{\frac{1}{T-1}}}{(1+\alpha L)^2}.$
Using the fact that $\bar{p}>1$, we need
$\frac{(1+\tilde{\delta}_G)^{\frac{1}{T-1}}}{(1+\alpha L)^2}>1,$
which is equivalent to having $\alpha<\frac{(1+\tilde{\delta}_G)^{\frac{1}{2(T-1)}}-1}{L}.$
We next consider the case with $\Gamma_G=1+\frac{\bar{p}\bar{q}\alpha\beta\rho(AA^\prime)}{\bar{p}-1}$. We need $\frac{\Gamma_G^{T-1}}{1+\tilde{\delta}_G}<1$ and therefore, $\frac{\bar{p}\bar{q}}{\bar{p}-1}<\frac{(1+\tilde{\delta}_G)^{\frac{1}{T-1}}-1}{\alpha\beta\rho(AA^\prime)}$. Given that with any choice of $\bar{p},\bar{q}>1$, we have $\frac{\bar{p}\bar{q}}{\bar{p}-1}>1$, we need $\frac{(1+\tilde{\delta}_G)^{\frac{1}{T-1}}-1}{\alpha\beta\rho(AA^\prime)}>1$, which is satisfied if $\alpha<\frac{(1+\tilde{\delta}_G)^{\frac{1}{T-1}}-1}{\beta\rho(AA^\prime)}$. Similarly, we can consider the other possible value of $\Gamma_G$ and derive the other upper bound on $\alpha$.
\end{proof}
\subsection{Convergence Analysis of FlexPD-C}\label{Cproof}
In order to analyze the convergence properties of FlexPD-C algorithm, we first rewrite the primal update in \cref{alg:FlexPD-C} in the following compact form  
\be 
x^{k+1}=(I-\alpha B)^Tx^k-\alpha C\nabla f(x^k)-\alpha CA^\prime \lambda^k, \label{eq:xUpdate-C}
\ee
 where $C=\sum_{t=0}^{T-1}(I-\alpha B)^t.$  We next proceed to prove the linear convergence rate for our proposed framework. In  \cref{lemma:Cprop-C} and \cref{lemma:gradF-C} we establish some key relations which we use to derive two fundamental inequalities in  \cref{FundamentalIneq-C} and \cref{lambdabound-C}. Finally we use these key inequalities to prove the global linear rate of convergence in  \cref{thm:linconv-C}. \fm{In the following analysis,  we define matrices $M$ and $N$ as follows \be M=C^{-1}(I-\alpha B)^T\quad\mbox{and}\quad N=\frac{1}{\alpha }(C^{-1}-M). \label{MNdef-C}\ee} In the next lemma we show that matrix $C$ is invertible and thus matrices $M$ and $N$ are well-defined.
 \begin{lemma}\label{lemma:Cprop-C} 
 Consider the symmetric positive semi-definite matrix $B$ and matrices $C$, $M$, and $N$. If we choose $\alpha$ such that $I-\alpha B$ is positive definite, i.e., $\alpha <\frac{1}{\rho(B)}$, then matrix $C$ is invertible and symmetric, matrix $N$ is symmetric positive semi-definite, and matrix $M$ is symmetric positive definite with $\frac{\big(1-\alpha \rho(B)\big)^T}{\sum_{t=0}^{T-1}\big(1-\alpha \rho(B)\big)^t}I\preceq M\preceq\frac{1}{T}I.$
\end{lemma}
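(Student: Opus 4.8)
The plan is to reduce every claim to a scalar statement about the eigenvalues of $B$. Since $B$ is symmetric it admits an orthonormal diagonalization $B = Q\Lambda Q^\prime$ with $\Lambda = \mathrm{diag}(\mu_1,\dots,\mu_n)$ and each $\mu_i \in [0,\rho(B)]$. Because $C$, $M$, and $N$ are all obtained from $B$ by polynomial and rational operations (each is a matrix function of $B$), they commute with $B$, are symmetric, and are diagonalized by the same orthogonal $Q$. Hence it suffices to track what happens to a single eigenvalue through each definition. Writing $u_i = 1 - \alpha\mu_i$ for the eigenvalues of $I - \alpha B$, the hypothesis $\alpha < \frac{1}{\rho(B)}$ guarantees $u_i \in [1-\alpha\rho(B),\,1] \subset (0,1]$, which is the positivity we will exploit repeatedly.

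First I would dispatch invertibility of $C$. The eigenvalue of $C$ associated with $u_i$ is $c(u_i) = \sum_{t=0}^{T-1} u_i^t$, which is strictly positive since $u_i > 0$. Thus $C$ is symmetric positive definite, in particular invertible, so $M = C^{-1}(I-\alpha B)^T$ and $N = \frac{1}{\alpha}(C^{-1}-M)$ are well defined and symmetric (again as functions of the single symmetric matrix $B$).

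Next, for $M$ the relevant scalar function is
\[ m(u) = \frac{u^T}{\sum_{t=0}^{T-1} u^t}. \]
Positivity of $m(u)$ on $(0,1]$ is immediate, giving $M \succ \mathbf{0}$. For the two-sided bound I would establish that $m$ is increasing on $(0,\infty)$; the cleanest route is to rewrite $m(u) = \big(\sum_{s=1}^{T} u^{-s}\big)^{-1}$, whose denominator is strictly decreasing in $u>0$, so $m$ is strictly increasing. Consequently the extreme values of $m$ over $u \in [1-\alpha\rho(B),1]$ are attained at the endpoints: $m(1) = \frac{1}{T}$ at the top and $m(1-\alpha\rho(B))$, the stated lower bound, at the bottom. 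Translating back through the common eigenbasis yields $\frac{(1-\alpha\rho(B))^T}{\sum_{t=0}^{T-1}(1-\alpha\rho(B))^t}\, I \preceq M \preceq \frac{1}{T}\, I$.

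Finally, for $N$ the corresponding eigenvalue is $n(u) = \frac{1}{\alpha}\big(\frac{1}{c(u)} - m(u)\big) = \frac{1}{\alpha}\,\frac{1-u^T}{c(u)}$, which is nonnegative on $(0,1]$ because $u \le 1$ forces $1 - u^T \ge 0$ while $c(u) > 0$ and $\alpha > 0$; hence $N \succeq \mathbf{0}$. The only genuinely non-routine step is the monotonicity of $m$ needed for the tight bounds, and the reciprocal rewriting reduces it to the trivial monotonicity of $\sum_s u^{-s}$; everything else is bookkeeping through the shared eigendecomposition of $B$.
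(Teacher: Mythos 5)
Your proof is correct and follows essentially the same route as the paper: diagonalize in the common eigenbasis of $B$, reduce each claim to a scalar statement about the eigenvalue maps $u \mapsto \sum_{t=0}^{T-1}u^t$, $u\mapsto u^T/\sum_{t=0}^{T-1}u^t$, and $u\mapsto (1-u^T)/(\alpha\sum_{t=0}^{T-1}u^t)$ on $u\in[1-\alpha\rho(B),1]\subset(0,1]$. Your rewriting $m(u)=\bigl(\sum_{s=1}^{T}u^{-s}\bigr)^{-1}$ is a nice touch that actually justifies the monotonicity step the paper only asserts.
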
 
\begin{proof} 
 Since $I-\alpha B$ is symmetric, it can be written as $I-\alpha B=VZV^\prime $, where $V\in\mathbb{R}^{n\times n}$ is an orthonormal matrix, i.e., $VV^\prime=I$, whose $i^{th}$ column $v_i$ is the eigenvector of $(I-\alpha B)$ and $v_i^\prime v_t=0$ for $i\neq t$ and $Z$ is the diagonal matrix whose diagonal elements, $Z_{ii}=\mu_i >0$, are the corresponding eigenvalues. We also note that since $V$ is an orthonormal matrix, we have $(I-\alpha B)^t=VZ^tV^\prime$. Therefore,
 \begin{equation*}C=\sum_{t=0}^{T-1}(I-\alpha B)^t=V\big(\sum_{t=0}^{T-1}Z^t\big)V^\prime =V\bar{Z}V^\prime.\end{equation*}
 Hence, matrix $C$ is symmetric. We note that matrix $\bar{Z}$ is a diagonal matrix with $\bar{Z}_{ii}=1+\sum_{t=1}^{T-1}\mu_i ^t$. Since $\mu_i >0$ for all $i$, $\bar{Z}_{ii}\neq 0$ and thus $\bar{Z}$ is invertible and we have 
 $C^{-1}=V\bar{Z}^{-1}V^\prime.$ We also have $M =C^{-1}(I-\alpha  B)^T=V\bar{Z}^{-1}V^\prime VZ^TV^\prime =V\bar{Z}^{-1}Z^TV^\prime =VWV^\prime$,
 where $W$ is a diagonal matrix with $W_{ii}=\frac{\mu_i ^T}{1+\sum_{t=1}^{T-1}\mu_i ^t}$, consequently, matrix $M$ is symmetric.
 We next find the smallest and largest eigenvalues of matrix $M$. We note that since $W_{ii}$ is increasing in $\mu_i $, the smallest and largest eigenvalues of $M$ can be computed using the smallest and largest eigenvalues of $I-\alpha B$. We have $0\preceq B\preceq \rho(B)I$, where $\rho(B)$ is the largest eigenvalue of matrix $B$. Therefore, the largest and smallest eigenvalues of $I-\alpha B$ are $1$ and $1-\alpha \rho(B)$ respectively. Hence, $\frac{\big(1-\alpha \rho(B)\big)^T}{\sum_{t=0}^{T-1}\big(1-\alpha \rho(B)\big)^t}\preceq M\preceq\frac{1}{T}.$ We next use the eigenvalue decomposition of matrices $C^{-1}$ and $M$ to obtain $C^{-1}-M=V\bar{Z}^{-1}V^\prime -VWV^\prime =V(\bar{Z}^{-1}-W)V^\prime,$
 where $\bar{Z}^{-1}-W$ is a diagonal matrix, and its $i^{th}$ diagonal element is equal to $\frac{1-\mu_i ^T}{1+\sum_{t=1}^{T-1}\mu_i ^t}$. Since $0<\mu_i \leq 1$ for all $i$, we have $\frac{1-\mu_i ^T}{1+\sum_{t=1}^{T-1}\mu_i ^t}\geq 0$ and hence $N$ is symmetric positive semi-definite.
 \end{proof}

\begin{lemma}\label{lemma:gradF-C} Consider the primal-dual iterates as in \cref{alg:FlexPD-C} and recall the definitions of matrices $M$ and $N$ from Eq. \cref{MNdef-C}, if $\alpha <\frac{1}{\rho(B)}$, then 
	\begin{align*} &\alpha (\nabla f(x^k)- \nabla f(x^*) )=  M(x^k-x^{k+1}) +\\& \alpha  (\beta  A^\prime A-N) (x^{k+1} - x^*) -\alpha  A^\prime (\lambda^{k+1} -\lambda ^*)\end{align*}
\end{lemma}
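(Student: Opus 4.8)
The plan is to manipulate the compact primal update \cref{eq:xUpdate-C} into the claimed form, exploiting the invertibility of $C$ guaranteed by \cref{lemma:Cprop-C} together with the definitions $M=C^{-1}(I-\alpha B)^T$ and $N=\frac{1}{\alpha}(C^{-1}-M)$ from \cref{MNdef-C}. First I would rearrange \cref{eq:xUpdate-C} to isolate the gradient term, obtaining $\alpha C\nabla f(x^k)=(I-\alpha B)^T x^k - x^{k+1} - \alpha CA^\prime\lambda^k$, and then left-multiply by $C^{-1}$ to get $\alpha\nabla f(x^k)=M x^k - C^{-1}x^{k+1} - \alpha A^\prime\lambda^k$. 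The definition of $N$ supplies the key substitution $C^{-1}=M+\alpha N$, which I would apply to the middle term so that $C^{-1}x^{k+1}=Mx^{k+1}+\alpha Nx^{k+1}$, yielding $\alpha\nabla f(x^k)=M(x^k-x^{k+1})-\alpha Nx^{k+1}-\alpha A^\prime\lambda^k$.

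Next I would eliminate the stale dual variable using the dual update \cref{eq:lambda}, writing $\lambda^k=\lambda^{k+1}-\beta Ax^{k+1}$. Substituting this into the previous relation and collecting the two terms carrying $x^{k+1}$ produces $\alpha\nabla f(x^k)=M(x^k-x^{k+1})+\alpha(\beta A^\prime A-N)x^{k+1}-\alpha A^\prime\lambda^{k+1}$. It then remains to subtract the analogous identity at the optimum, namely $\alpha\nabla f(x^*)=\alpha(\beta A^\prime A-N)x^*-\alpha A^\prime\lambda^*$; subtracting this from the displayed relation leaves $M(x^k-x^{k+1})$ untouched while the remaining terms combine into the $(x^{k+1}-x^*)$ and $(\lambda^{k+1}-\lambda^*)$ differences, giving exactly the stated equality.

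The step that needs care is verifying the optimality identity $\alpha\nabla f(x^*)=\alpha(\beta A^\prime A-N)x^*-\alpha A^\prime\lambda^*$ from the KKT conditions \cref{kkt}. The crucial observation is that $Nx^*=0$: since $Bx^*=0$ by \cref{kkt}, we have $(I-\alpha B)x^*=x^*$ and hence $(I-\alpha B)^T x^*=x^*$, so that $N=\frac{1}{\alpha}C^{-1}\big(I-(I-\alpha B)^T\big)$ annihilates $x^*$. Combined with $Ax^*=0$, this collapses $(\beta A^\prime A-N)x^*$ to zero, leaving $-A^\prime\lambda^*$, which matches $\nabla f(x^*)=-A^\prime\lambda^*$ from the stationarity condition. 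This is the only place where the structural assumptions on $B$ (shared null space with $A$, via \cref{Bprop}) and the condition $\alpha<1/\rho(B)$ enter, the latter ensuring through \cref{lemma:Cprop-C} that $C^{-1}$, $M$, and $N$ are well-defined; the remaining manipulations are purely algebraic rearrangements with no sign or indexing pitfalls.
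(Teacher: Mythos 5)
Your proof is correct and follows essentially the same route as the paper's: rearrange the compact update \cref{eq:xUpdate-C}, eliminate $\lambda^k$ via \cref{eq:lambda}, and subtract the corresponding identity at $(x^*,\lambda^*)$, with the only cosmetic difference being that you multiply by $C^{-1}$ early and split $C^{-1}=M+\alpha N$ directly, while the paper adds and subtracts $(I-\alpha B)^Tx^{k+1}$ and applies $C^{-1}$ at the end. Your explicit verification that $Nx^*=0$ (via $Bx^*=0$) is a nice touch where the paper simply invokes the fact that $(x^*,\lambda^*)$ is a fixed point of the iteration.
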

\begin{proof}
	At each iteration, from Eq. \cref{eq:xUpdate-C} we have
$\alpha C\nabla f(x^k)  = (I-\alpha B)^Tx^k-x^{k+1} -\alpha C A^\prime  \lambda^k.$
	Moreover, from Eq. \cref{eq:lambda} we have
	$\lambda ^k = \lambda ^{k+1} - \beta  A x^{k+1}.$ We can substitute this expression for $\lambda^k$ into the previous equation and have
	\begin{equation}\begin{aligned} &\alpha C\nabla f(x^k)  =  (I-\alpha B)^Tx^k-x^{k+1} -\alpha C A^\prime  (\lambda ^{k+1} - \beta  A x^{k+1}) =(I-\alpha  B)^T\\&(x^k-x^{k+1})  +\big(\alpha  \beta  CA^\prime A-I+(I-\alpha  B)^T\big) x^{k+1} -\alpha C A^\prime \lambda^{k+1},\label{gradeq-C}\end{aligned}\end{equation} where we added and subtracted a term of $(I-\alpha  B)^T x^{k+1}$.
Since an optimal solution pair $(x^*, \lambda^*)$ is a fixed point of the algorithm update, we also have
	\[ \alpha  C\nabla f(x^*)  =   \big(\alpha \beta  CA^\prime A-I+(I-\alpha  B)^T\big) x^{*} -\alpha  C A^\prime \lambda^{^*}.\]
	We then subtract the above inequality from Eq. \cref{gradeq-C} and multiply both sides by $C^{-1}$ [c.f. \cref{lemma:Cprop-C}],  to obtain the result.
\end{proof}
\begin{lemma} \label{FundamentalIneq-C}
Consider the primal-dual iterates as in \cref{alg:FlexPD-C} and recall the definition of matrices $M$ and $N$ from Eq. \cref{MNdef-C}. If $\alpha<\frac{1}{\rho(B)}$, we have 
\ew{for any $\eta_4>0$},
\begin{equation*}\begin{aligned}  &\norm{x^{k+1}-x^*}^2_{P_C}+ \norm{x^{k+1}-x^k}^2_{Q_C}\leq\\& 
\norm{x^k-x^*}^2_M - \norm{x^{k+1}-x^*}^2_M
+ \frac{\alpha }{\beta }\left(\norm{\lambda^k-\lambda^*}^2 - \norm{\lambda^{k+1}-\lambda^*}^2\right)
 \end{aligned}\end{equation*}
 with $P_C=2\alpha  m I-\alpha \eta_4 I +2\alpha N-\alpha\beta  A^\prime A$ and
 $Q_C=M- \frac{\alpha  L^2}{\eta_4}I$
\end{lemma}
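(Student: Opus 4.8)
The plan is to mirror the argument of \cref{lemma:strcvx-F} essentially verbatim, substituting the one-shot FlexPD-C quantities $x^{k+1}$ and $x^k$ for the inner iterates $x^{k+1,T}$ and $x^{k+1,T-1}$, the matrix $M$ for $U$, the matrix $N$ for $B$, and invoking \cref{lemma:gradF-C} in place of \cref{lemma:gradF-F}. Under the hypothesis $\alpha<\frac{1}{\rho(B)}$, \cref{lemma:Cprop-C} guarantees that $M$ is symmetric positive definite and $N$ is symmetric positive semi-definite, so $\norm{\cdot}_M$ and $\norm{\cdot}_N$ are genuine weighted (semi)norms and the polarization identities used below are valid.

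First I would start from the $m$-strong convexity of $f$ applied to the pair $(x^{k+1},x^*)$, writing $2\alpha m\norm{x^{k+1}-x^*}^2\le 2\alpha(x^{k+1}-x^*)^\prime\big(\nabla f(x^{k+1})-\nabla f(x^*)\big)$, and then adding and subtracting $\nabla f(x^k)$ -- the gradient that FlexPD-C actually evaluates -- to split the right-hand side into a term in $\nabla f(x^{k+1})-\nabla f(x^k)$ and a term in $\nabla f(x^k)-\nabla f(x^*)$. Into the latter I would substitute the identity of \cref{lemma:gradF-C}, which expresses $\alpha\big(\nabla f(x^k)-\nabla f(x^*)\big)$ through $M(x^k-x^{k+1})$, $\alpha(\beta A^\prime A-N)(x^{k+1}-x^*)$, and $-\alpha A^\prime(\lambda^{k+1}-\lambda^*)$.

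Next I would bound the cross term $2\alpha(x^{k+1}-x^*)^\prime\big(\nabla f(x^{k+1})-\nabla f(x^k)\big)$ by Young's inequality with parameter $\eta_4>0$ together with the $L$-Lipschitz property of $\nabla f$, producing $\alpha\eta_4\norm{x^{k+1}-x^*}^2+\frac{\alpha L^2}{\eta_4}\norm{x^{k+1}-x^k}^2$. For the dual contribution, the dual update \cref{eq:lambda} and $Ax^*=0$ give $\alpha(x^{k+1}-x^*)^\prime A^\prime(\lambda^{k+1}-\lambda^*)=\frac{\alpha}{\beta}(\lambda^{k+1}-\lambda^k)^\prime(\lambda^{k+1}-\lambda^*)$, and I would convert the remaining inner products into differences of squared weighted norms via $-2(x^{k+1}-x^*)^\prime M(x^{k+1}-x^k)=\norm{x^k-x^*}^2_M-\norm{x^{k+1}-x^*}^2_M-\norm{x^{k+1}-x^k}^2_M$ and the analogous identity for the $\lambda$-terms. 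Finally, using $\norm{\lambda^{k+1}-\lambda^k}^2=\beta^2\norm{x^{k+1}-x^*}^2_{A^\prime A}$, collecting every $\norm{x^{k+1}-x^*}^2$ contribution into the weight $2\alpha m I-\alpha\eta_4 I+2\alpha N-\alpha\beta A^\prime A=P_C$ and every $\norm{x^{k+1}-x^k}^2$ contribution into $M-\frac{\alpha L^2}{\eta_4}I=Q_C$ yields the claim.

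I do not anticipate a serious obstacle, since the derivation is structurally identical to \cref{lemma:strcvx-F}. The one point requiring care is the bookkeeping of the $A^\prime A$ coefficient after merging the $2\alpha\beta\norm{x^{k+1}-x^*}^2_{A^\prime A}$ coming from \cref{lemma:gradF-C} with the $-\alpha\beta\norm{x^{k+1}-x^*}^2_{A^\prime A}$ arising from $\norm{\lambda^{k+1}-\lambda^k}^2$, so that exactly $-\alpha\beta A^\prime A$ survives inside $P_C$. The other thing to confirm is the legitimacy of the polarization step, which is precisely why $M$ must be symmetric and $N$ positive semi-definite (so that the $-2\alpha\norm{x^{k+1}-x^*}^2_N$ piece is absorbed into $P_C$ without issue); both facts are supplied by \cref{lemma:Cprop-C}.
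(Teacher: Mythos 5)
Your proposal is correct and follows essentially the same route as the paper's own proof: strong convexity at $(x^{k+1},x^*)$, splitting via $\nabla f(x^k)$, substituting the identity of \cref{lemma:gradF-C}, Young's inequality with $\eta_4$, the dual-update identity, the two polarization identities, and the substitution $\norm{\lambda^{k+1}-\lambda^k}^2=\beta^2\norm{x^{k+1}-x^*}^2_{A^\prime A}$ before collecting terms into $P_C$ and $Q_C$. Your sign bookkeeping on the $A^\prime A$ coefficient also matches the paper's.
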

\begin{proof}
From strong convexity of function $f(x)$, we have
	\begin{align*} & 2\alpha m\norm{x^{k+1}-x^*}^2 \leq 2\alpha  (x^{k+1}-x^*)^\prime \big(\nabla f(x^{k+1}) - \nabla f(x^*)\big) \\&= 2\alpha  (x^{k+1}-x^*)^\prime \big(\nabla f(x^{k+1})-\nabla f(x^k)\big)+ 2\alpha  (x^{k+1}-x^*)^\prime (\nabla f(x^k)-\nabla f(x^*)\big),\end{align*} where we add and subtract a term $(x^{k+1}-x^*)^\prime \nabla f(x^k)$.
	We can substitute the equivalent expression of $ \alpha (\nabla f(x^k)- \nabla f(x^*) )$ from  \cref{lemma:gradF-C} and have
		\begin{equation}\begin{aligned} & 2\alpha  m \norm{x^{k+1}-x^*}^2\leq2\alpha  (x^{k+1}-x^*)^\prime \big(\nabla f(x^{k+1})-\nabla f(x^k)\big)+\\ &2\alpha  (x^{k+1}-x^*)^\prime  (\beta  A^\prime A - N)(x^{k+1}-x^*)+2(x^{k+1}-x^*)^\prime M(x^k-x^{k+1})  \\&-2\alpha (x^{k+1}-x^*)^\prime  A^\prime (\lambda^{k+1} -\lambda ^*). \label{eq:update1-C} \end{aligned}\end{equation}
		By Young's inequality and the Lipschitz continuity of $\nabla f(.)$ we have
 $2\alpha  (x^{k+1}-x^*)^\prime \big(\nabla f(x^{k+1})-\nabla f(x^k)\big) \leq \alpha \eta_4 \norm{x^{k+1}-x^*}^2+\frac{\alpha  L^2 }{\eta_4}\norm{x^{k+1}-x^k}^2$, for all $\eta_4>0$. 
By dual update Eq. \cref{eq:lambda} and feasibility of $x^*$, we have 
$Ax^{k+1} = \frac{1}{\beta }(\lambda^{k+1}-\lambda^k),\quad Ax^* = 0.$
These two equations combined yields
$\alpha (x^{k+1}-x^*)^\prime  A^\prime (\lambda^{k+1} -\lambda ^*) = \frac{\alpha }{\beta }(\lambda^{k+1}-\lambda^k)^\prime (\lambda^{k+1}-\lambda^*).$
 We now focus on the last two terms of Eq. \cref{eq:update1-C}. First since matrix $M$ is symmetric, we have
$-2(x^{k+1}-x^*)^\prime M(x^{k+1}-x^k)  = \norm{x^k-x^*}^2_M -\norm{x^{k+1}-x^*}^2_M - \norm{x^{k+1}-x^k}^2_M.$
 Similarly, we have $-2\frac{\alpha }{\beta }(\lambda^{k+1}-\lambda^k)^\prime (\lambda^{k+1}-\lambda^*) = \frac{\alpha }{\beta }\left(\norm{\lambda^k-\lambda^*}^2 - \norm{\lambda^{k+1}-\lambda^*}^2-\norm{\lambda^{k+1}-\lambda^k}^2\right).$
 Now we combine the terms in the preceding three relations and Eq. \cref{eq:update1-C} and have 
\begin{align*}&2\alpha  m \norm{x^{k+1}-x^*}^2\leq 2\alpha \norm{x^{k+1}-x^*}^2_{\beta A^\prime A-N}+ \alpha \eta_4 \norm{x^{k+1}-x^*}^2\\&+\frac{\alpha  L^2}{\eta_4}\norm{x^{k+1}-x^k}^2+ \norm{x^k-x^*}^2_M- \norm{x^{k+1}-x^*}^2_M-\norm{x^{k+1}-x^k}^2_M \\&+ \frac{\alpha }{\beta }\left(\norm{\lambda^k-\lambda^*}^2 - \norm{\lambda^{k+1}-\lambda^*}^2-\norm{\lambda^{k+1}-\lambda^k}^2\right).
 \end{align*} We now use Eq. \cref{eq:lambda} together with the fact that $Ax^*=0$ to obtain $\norm{\lambda^{k+1}-\lambda^k}^2 =\beta ^2(x^{k+1}-x^*)^\prime (A^\prime A)(x^{k+1}-x^*).
$ By substituting this into the previous inequality and by rearranging the terms in the above inequality, we complete the proof.
\end{proof}
\begin{lemma}\label{lambdabound-C}
Consider the primal-dual iterates as in \cref{alg:FlexPD-C} and recall the definition of symmetric matrices $M$ and $N$ from Eq. \cref{MNdef-C} then if $\alpha<\frac{1}{\rho(B)}$, for $\tilde{d}, \tilde{g}, \tilde{e}>1$ we have
\begin{equation*}\begin{aligned}\label{ineq:toCompare-C}
  &\norm{\lambda^{k+1}-\lambda^*}^2\leq \frac{\tilde{d}}{\alpha ^2 s(AA^\prime )} \left(\frac{\tilde{e}}{\tilde{e}-1} \rho(M)^2+\tilde{e} \alpha ^2L^2\right)\norm{x^k-x^{k+1}}^2+\\& \frac{\tilde{d}}{(\tilde{d}-1)\alpha ^2 s(AA^\prime )}\times \left(\frac{\tilde{g}}{\tilde{g}-1} \alpha ^2 \rho \big((\beta  A^\prime A-N)^2\big)+\tilde{g}\alpha ^2L^2\right)\norm{x^{k+1} - x^*}^2,
  \end{aligned}\end{equation*}
with $s(AA^\prime )$ being the smallest nonzero eigenvalue of matrix $AA^\prime$.
\end{lemma}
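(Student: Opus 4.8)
The plan is to mirror the argument used for FlexPD-F in \cref{lemma:toCompare-F}, with $U$ replaced by $M$ and $B$ by $N$. I would begin from the identity proved in \cref{lemma:gradF-C} and solve it for the dual term, writing
\[\alpha A^\prime(\lambda^{k+1}-\lambda^*) = M(x^k-x^{k+1}) + \alpha(\beta A^\prime A-N)(x^{k+1}-x^*) - \alpha\big(\nabla f(x^k)-\nabla f(x^*)\big).\]
To expose a gradient difference that can be controlled by smoothness at the current iterate, I would add and subtract $\alpha\nabla f(x^{k+1})$, splitting the gradient contribution into $-\alpha\big(\nabla f(x^k)-\nabla f(x^{k+1})\big)$ and $-\alpha\big(\nabla f(x^{k+1})-\nabla f(x^*)\big)$.

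Next I would take the squared Euclidean norm of both sides and apply \cref{lemma:NormSquare} with $\xi=\tilde{d}$, grouping $b = M(x^k-x^{k+1})-\alpha\big(\nabla f(x^k)-\nabla f(x^{k+1})\big)$ as the term receiving the factor $\tilde{d}$ and $a = \alpha(\beta A^\prime A-N)(x^{k+1}-x^*)-\alpha\big(\nabla f(x^{k+1})-\nabla f(x^*)\big)$ as the term receiving the factor $\tilde{d}/(\tilde{d}-1)$. I would then apply \cref{lemma:NormSquare} once more inside each group: with $\xi=\tilde{e}$ to separate $M(x^k-x^{k+1})$ from its gradient difference, and with $\xi=\tilde{g}$ to separate $(\beta A^\prime A-N)(x^{k+1}-x^*)$ from its gradient difference. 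Bounding $\norm{M(x^k-x^{k+1})}^2\leq\rho(M)^2\norm{x^k-x^{k+1}}^2$ and $\norm{(\beta A^\prime A-N)(x^{k+1}-x^*)}^2\leq\rho\big((\beta A^\prime A-N)^2\big)\norm{x^{k+1}-x^*}^2$ (both using that $M$ and $N$ are symmetric, c.f. \cref{lemma:Cprop-C}, together with $\rho(M^2)=\rho(M)^2$), and applying the $L$-Lipschitz property of $\nabla f$ to the two gradient differences, yields a bound on $\norm{\alpha A^\prime(\lambda^{k+1}-\lambda^*)}^2$ whose two coefficients match the right-hand side of the claim up to the final division.

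Finally, since $\lambda^0=0$ and $\lambda^{k+1}=\lambda^k+\beta Ax^{k+1}$, the iterate $\lambda^{k+1}$ lies in the column space of $A$, and by the normalization convention on $\lambda^*$ so does $\lambda^*$; hence $\lambda^{k+1}-\lambda^*$ is orthogonal to $\mathrm{null}(A^\prime)$. This lets me invoke $\norm{\alpha A^\prime(\lambda^{k+1}-\lambda^*)}^2\geq\alpha^2 s(AA^\prime)\norm{\lambda^{k+1}-\lambda^*}^2$, where $s(AA^\prime)$ is the smallest nonzero eigenvalue. Dividing through by $\alpha^2 s(AA^\prime)$ produces the stated inequality.

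I do not expect a genuine obstacle, since the argument is structurally identical to \cref{lemma:toCompare-F}. The only points demanding care are the bookkeeping of the nested applications of \cref{lemma:NormSquare} — keeping track of which subterm receives $\xi$ versus $\xi/(\xi-1)$ so that the coefficients land on the correct $\tilde{d},\tilde{e},\tilde{g}$ pattern — and ensuring the spectral bounds for the symmetric matrices $M$ and $\beta A^\prime A-N$ are applied in the squared forms $\rho(M)^2$ and $\rho\big((\beta A^\prime A-N)^2\big)$ respectively.
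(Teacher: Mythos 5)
Your proposal is correct and follows essentially the same route as the paper's proof: the same identity from \cref{lemma:gradF-C}, the same add-and-subtract of $\alpha\nabla f(x^{k+1})$, the same nested applications of \cref{lemma:NormSquare} with $\tilde d,\tilde e,\tilde g$ assigned to the same subterms, the same spectral bounds on $M^2$ and $(\beta A^\prime A-N)^2$, and the same orthogonality argument yielding $\norm{\alpha A^\prime(\lambda^{k+1}-\lambda^*)}^2\geq\alpha^2 s(AA^\prime)\norm{\lambda^{k+1}-\lambda^*}^2$. No gaps.
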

\begin{proof}
We recall the following relation from  \cref{lemma:gradF-C}, $\alpha  A^\prime (\lambda^{k+1} -\lambda ^*)=$
	\begin{align*} M(x^k-x^{k+1}) + \alpha  (\beta  A^\prime A-N) (x^{k+1} - x^*) - \alpha (\nabla f(x^k)- \nabla f(x^*) ).\end{align*}	
We can add and subtract a term of $\nabla f(x^{k+1})$ and take squared norm of both sides of the above equality to obtain 
\begin{align*}&\norm{\alpha  A^\prime (\lambda^{k+1} -\lambda ^*)}^2 =\big\Vert M(x^k-x^{k+1}) +  \alpha  (\beta  A^\prime A-N) (x^{k+1} - x^*) -\\&\alpha \left((\nabla f(x^{k})- \nabla f(x^{k+1} )\right)- \alpha \left(\nabla f(x^{k+1})- \nabla f(x^*) \right)\big\Vert^2.\end{align*}

By using the result of \cref{lemma:NormSquare}, we have for any $\tilde{d}, \tilde{g}, \tilde{e}>1$
\begin{align*} &\norm{\alpha  A^\prime (\lambda^{k+1} -\lambda ^*)}^2 \leq \tilde{d}\Big(\frac{\tilde{e}}{\tilde{e}-1}\norm{x^k-x^{k+1}}^2_{M^2} +\tilde{e} \alpha ^2\norm{\nabla f(x^{k})- \nabla f(x^{k+1}) }^2\Big) +\\& \frac{\tilde{d}}{\tilde{d}-1}\Big(\frac{\tilde{g}}{\tilde{g}-1} \alpha ^2\norm{x^{k+1} - x^*}^2_{(\beta  A^\prime A-N)^2}+ \tilde{g}\alpha ^2\norm{\nabla f(x^{k+1})- \nabla f(x^*)}^2\Big).\end{align*}

Since $\lambda^0=0$ and $\lambda^{k+1} = \lambda^k + \beta  Ax^{k+1}$, we have that $\lambda^k$ is in the column space of $A$ and hence orthogonal to the null space of $A^\prime $, therefore, we have
$\norm{\alpha  A^\prime (\lambda^{k+1} -\lambda ^*)}^2\geq \alpha ^2s(AA^\prime )\norm{\lambda^{k+1}-\lambda^*}^2$. By using this relation and Lipschitz property of $\nabla f(.)$ , we have
 \begin{align*}
 &\alpha ^2 s(AA^\prime )\norm{\lambda^{k+1}-\lambda^*}^2\leq \tilde{d}\Big((x^k-x^{k+1})^\prime\left[\frac{\tilde{e}}{\tilde{e}-1} M^2+\tilde{e} \alpha ^2L^2I\right](x^k-x^{k+1}) \Big) + \\&\frac{\tilde{d}}{\tilde{d}-1}\Big((x^{k+1} - x^*)^\prime  \left[\frac{\tilde{g}}{\tilde{g}-1} \alpha ^2 (\beta  A^\prime A-N)^2+\tilde{g}\alpha ^2L^2I\right] (x^{k+1} - x^*)\Big).
 \end{align*}
 By using the facts that $M^2\preceq\rho(M^2) I$ and $(\beta  A^\prime A-N)^2\preceq\rho\big((\beta  A^\prime A-N)^2\big)I$, we complete the proof.
  \end{proof}
\begin{theorem} \label{thm:linconv-C}
Consider the primal-dual iterates as in \cref{alg:FlexPD-C}, recall the definition of matrix $M$ from Eq. \cref{MNdef-C}, and define $ z^k=\begin{bmatrix} x^k \\ \lambda^k \end{bmatrix}, \quad \mathcal{G}_C=\begin{bmatrix} M & \textbf{0} \\ \textbf{0} & \frac{\alpha}{\beta} I \end{bmatrix}.$  If the primal and dual stepsizes satisfy 
$0<\beta <\frac{2m-\eta_4}{\rho(A^\prime A)}$, $0<\alpha <\frac{1-\left(\frac{L^2}{L^2+\eta_4\rho(B)}\right)^{1/T}}{\rho(B)}$ with $0<\eta_4<2m$, then there exists a $\delta_C>0$ such that 
\[\norm{z^{k+1}-z^*}^2_{\mathcal{G}_C}\leq\frac{1}{1+\delta_C}\norm{z^k-z^*}^2_{\mathcal{G}_C},\] that is $\norm{z^k-z^*}_{\mathcal{G}_C}$ converges Q-linearly to $0$  and consequently $\norm{x^k-x^*}_M$ converges R-linearly to $0$.
\end{theorem}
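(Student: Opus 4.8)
The plan is to mirror the structure of the proof of \cref{thm:delta-F}, but here the argument delivers the final contraction directly, with no analogue of the $\Gamma_F$ recursion: the FlexPD-C primal update \cref{eq:xUpdate-C} already folds all $T$ inner steps into closed form through the matrices $C$, $M$, and $N$, so a single application of the key lemmas suffices. Writing the Lyapunov function as $V^k=\norm{z^k-z^*}^2_{\mathcal{G}_C}=\norm{x^k-x^*}^2_M+\frac{\alpha}{\beta}\norm{\lambda^k-\lambda^*}^2$, the desired bound $V^{k+1}\le\frac{1}{1+\delta_C}V^k$ is equivalent to
\[\delta_C\Big(\norm{x^{k+1}-x^*}^2_M+\tfrac{\alpha}{\beta}\norm{\lambda^{k+1}-\lambda^*}^2\Big)\le \norm{x^k-x^*}^2_M-\norm{x^{k+1}-x^*}^2_M+\tfrac{\alpha}{\beta}\big(\norm{\lambda^k-\lambda^*}^2-\norm{\lambda^{k+1}-\lambda^*}^2\big).\]
By \cref{FundamentalIneq-C} the right-hand side is bounded below by $\norm{x^{k+1}-x^*}^2_{P_C}+\norm{x^{k+1}-x^k}^2_{Q_C}$, so it suffices to exhibit some $\delta_C>0$ with $\delta_C(\norm{x^{k+1}-x^*}^2_M+\frac{\alpha}{\beta}\norm{\lambda^{k+1}-\lambda^*}^2)\le \norm{x^{k+1}-x^*}^2_{P_C}+\norm{x^{k+1}-x^k}^2_{Q_C}$.

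Next I would isolate the dual term, rewriting the required inequality as an upper bound on $\norm{\lambda^{k+1}-\lambda^*}^2$ by $\frac{\beta}{\delta_C\alpha}\norm{x^{k+1}-x^*}^2_{P_C-\delta_C M}+\frac{\beta}{\delta_C\alpha}\norm{x^{k+1}-x^k}^2_{Q_C}$, and then compare this against the bound supplied by \cref{lambdabound-C}. Matching the $\norm{x^{k+1}-x^k}^2$ and $\norm{x^{k+1}-x^*}^2$ coefficients term-by-term produces the two semidefinite conditions
\[\frac{\beta}{\delta_C\alpha}Q_C\succcurlyeq \frac{\tilde d}{\alpha^2 s(AA')}\Big(\frac{\tilde e}{\tilde e-1}\rho(M)^2+\tilde e\alpha^2L^2\Big)I,\qquad \frac{\beta}{\delta_C\alpha}\big(P_C-\delta_C M\big)\succcurlyeq \frac{\tilde d\,\alpha^2\big(\frac{\tilde g}{\tilde g-1}\rho(\beta A'A-N)^2+\tilde g L^2\big)}{(\tilde d-1)\alpha^2 s(AA')}I.\]
Since \cref{lambdabound-C} holds for all $\tilde e,\tilde g>1$, I would minimize the convex-in-$\tilde e$ and convex-in-$\tilde g$ expressions: $\tilde e=1+\rho(M)/(\alpha L)$ gives value $(\rho(M)+\alpha L)^2$, and $\tilde g=1+\rho(\beta A'A-N)/L$ gives $\alpha^2(\rho(\beta A'A-N)+L)^2$, tightening both conditions.

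With the explicit forms $P_C=2\alpha mI-\alpha\eta_4 I+2\alpha N-\alpha\beta A'A$ and $Q_C=M-\frac{\alpha L^2}{\eta_4}I$, I would pass to smallest eigenvalues (the right-hand sides being scalar multiples of $I$) to convert both conditions into explicit upper bounds on $\delta_C$. The first bound is feasible precisely when $Q_C\succ0$, i.e. $\lambda_{\min}(M)>\frac{\alpha L^2}{\eta_4}$; the second requires $P_C\succ0$, which, using $N\succeq0$ from \cref{lemma:Cprop-C}, holds once $2m-\eta_4>\beta\rho(A'A)$, i.e. $\beta<\frac{2m-\eta_4}{\rho(A'A)}$, and then permits $\delta_C$ small enough that $P_C-\delta_C M\succeq0$.

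I expect the main obstacle to be the first condition, which is the one genuinely new feature relative to the FlexPD-F analysis, since it hinges on the spectrum of $M$ rather than of $U=I-\alpha B$. Invoking the bound $\lambda_{\min}(M)=\frac{(1-\alpha\rho(B))^T}{\sum_{t=0}^{T-1}(1-\alpha\rho(B))^t}$ from \cref{lemma:Cprop-C} and setting $\gamma=1-\alpha\rho(B)\in(0,1)$, the requirement $\lambda_{\min}(M)>\frac{\alpha L^2}{\eta_4}$ simplifies through $\sum_{t=0}^{T-1}\gamma^t=\frac{1-\gamma^T}{1-\gamma}$ to $\gamma^T(\eta_4\rho(B)+L^2)>L^2$, that is $\gamma^T>\frac{L^2}{L^2+\eta_4\rho(B)}$, which is exactly the stated stepsize bound $\alpha<\frac{1-(L^2/(L^2+\eta_4\rho(B)))^{1/T}}{\rho(B)}$. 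Finally I would record the resulting explicit upper bounds on $\delta_C$, verify that under $0<\eta_4<2m$ together with the two stepsize constraints the admissible parameter set over $\alpha,\beta,\tilde d$ is nonempty so that a valid $\delta_C>0$ exists, and conclude Q-linear convergence of $\norm{z^k-z^*}_{\mathcal{G}_C}$ and hence, since $M\succ0$, R-linear convergence of $\norm{x^k-x^*}_M$.
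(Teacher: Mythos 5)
Your proposal is correct and follows essentially the same route as the paper's own proof: the same reduction via \cref{FundamentalIneq-C}, the same comparison of the isolated dual term against \cref{lambdabound-C}, the same optimal choices of $\tilde e$ and $\tilde g$, and the same use of the eigenvalue bound on $M$ from \cref{lemma:Cprop-C} together with the geometric-sum identity to recover the stated stepsize condition on $\alpha$. The only cosmetic difference is writing $\rho(\beta A'A-N)$ in place of the paper's $\sqrt{\rho((\beta A'A-N)^2)}$, which is immaterial here.
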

\begin{proof}
To show linear convergence, we will show that
\begin{align*}   \delta_C&\left(\norm{x^{k+1}-x^*}^2_ M + \frac{\alpha }{\beta }\norm{\lambda^{k+1}-\lambda^*}^2\right)\leq \norm{x^k-x^*}^2_M \\&- \norm{x^{k+1}-x^*}^2_M
+ \frac{\alpha }{\beta }\left(\norm{\lambda^k-\lambda^*}^2 - \norm{\lambda^{k+1}-\lambda^*}^2\right),
\end{align*}
for some $\delta_C>0$. By using the result of  \cref{FundamentalIneq-C}, it suffices to show that there exists a $\delta_C>0$ such that $\delta_C\left(\norm{x^{k+1}-x^*}^2_M + \frac{\alpha }{\beta }\norm{\lambda^{k+1}-\lambda^*}^2\right) \leq  \norm{x^{k+1}-x^*}^2_{P_C}+\norm{x^{k+1}-x^k}^2_{Q_C}.$

We collect the terms and we will focus on showing 
\begin{equation}
\begin{aligned}\label{ineq:linearRateToShow-C}   &\norm{\lambda^{k+1}-\lambda^*}^2\leq  \frac{\beta }{\delta_C\alpha }\norm{x^{k+1}-x^k}^2_{Q_C}+\frac{\beta }{\delta_C\alpha }\norm{x^{k+1}-x^*}^2_{P_C-\delta_C M}.\end{aligned}\end{equation}
We compare Eq. \cref{ineq:linearRateToShow-C} with the result of \cref{lambdabound-C}, and we need to have for some $\delta_C>0$ 
  \begin{align*}&\frac{\beta }{\delta_C\alpha }Q_C \succcurlyeq \frac{\tilde{d}}{\alpha ^2 s(AA^\prime )} \left(\frac{\tilde{e}\rho(M)^2}{\tilde{e}-1} +\tilde{e} \alpha ^2L^2\right)I,\\
  &\frac{\beta }{\delta_C\alpha }(P_C-\delta_C M)\succcurlyeq \frac{\tilde{d}}{(\tilde{d}-1)\alpha ^2 s(AA^\prime )} \left(\frac{\tilde{g}\alpha ^2 }{\tilde{g}-1}  \rho\big((\beta  A^\prime A-N)^2\big)+\tilde{g}\alpha ^2L^2\right)I.
  \end{align*}
  Since the previous two inequalities holds for all $\tilde{d}, \tilde{g}, \tilde{e}>1$, we can find the parameters $\tilde{e}$ and $\tilde{g}$ to make the right hand sides the smallest, which would give us the most freedom to choose algorithm parameters.  The term $\frac{\tilde{e}}{\tilde{e}-1} \rho(M)^2 + \tilde{e} \alpha ^2L^2$ is convex in $\tilde{e}$ and to minimize it we set derivative to $0$ and have 
$\tilde{e} = 1+ \frac{\rho(M)}{\alpha  L}.$
Similarly, we choose $\tilde{g}$ to be
$ \tilde{g} =1 + \frac{ \sqrt{\rho \big((\beta  A^\prime A-N)^2\big)}}{ L}.$
With these parameter choices, we have
$\left(\frac{\tilde{e}}{\tilde{e}-1} \rho(M)^2+\tilde{e} \alpha ^2L^2\right) = (\rho(M)+ \alpha  L)^2,$ and
$\left(\frac{\tilde{g}}{\tilde{g}-1} \alpha ^2 \rho \big((\beta  A^\prime A-N)^2\big)+\tilde{g}\alpha ^2L^2\right) = \alpha ^2 \Big(\sqrt{\rho\big((\beta  A^\prime A-N)^2\big)}+L\Big)^2.$ By using the definitions of $P_C$ and $Q_C$ from \cref{FundamentalIneq-C}, the desired relations can be expressed as \[\frac{\beta }{\delta_C\alpha }(M- \frac{\alpha  L^2}{\eta_4}I) \succcurlyeq \frac{\tilde{d}}{\alpha ^2 s(AA^\prime )} \big(\rho(M)+ \alpha  L\big)^2I\] and
  \[\frac{\beta }{\delta_C\alpha }(2\alpha  m-\alpha \eta_4 I +2\alpha  N-\alpha \beta  A^\prime A-\delta_C M)\succcurlyeq  \frac{\tilde{d}}{(\tilde{d}-1) s(AA^\prime )}\Big(\sqrt{\rho\big((\beta  A^\prime A-N)^2\big)} +L\Big)^2I.
  \]
By using the fact that $N$ and $A^\prime A$ are  positive semi-definite matrices, and the result of \cref{lemma:Cprop-C} to bound eigenvalues of matrix $M$, the desired relations can be satisfied if

  \begin{align*}&\frac{\beta }{\delta_C\alpha }\Big(\frac{\big(1-\alpha \rho(B)\big)^T}{\sum_{t=0}^{T-1}\big(1-\alpha \rho(B)\big)^t}-\frac{\alpha  L^2}{\eta_4}\Big) \geq \frac{\tilde{d}}{\alpha ^2 s(AA^\prime )} (\frac{1}{T}+ \alpha  L)^2,\\
  &\frac{\beta }{\delta_C\alpha }(2\alpha  m-\alpha \eta_4-\alpha \beta \rho(A^\prime A) -\frac{\delta_C}{T}) \geq  \frac{\tilde{d}}{(\tilde{d}-1) s(AA^\prime )} \Big(\sqrt{\rho\big((\beta  A^\prime A-N)^2\big)}+L\Big)^2.
  \end{align*}
For the first inequality, we can multiply both sides by $\delta_C\alpha $ and rearrange the terms to have
$
\delta_C \leq \frac{\alpha \beta  \Big(\frac{\big(1-\alpha \rho(B)\big)^T}{\sum_{t=0}^{T-1}\big(1-\alpha \rho(B)\big)^t}-\frac{\alpha  L^2}{\eta_4}\Big)s(AA^\prime )  }{\tilde{d}(\frac{1}{T}+ \alpha  L)^2}.
$
We can similarly solve for the second inequality, 
$\delta_C\leq\frac{ \beta (2\alpha  m-\alpha \eta_4-\alpha \beta \rho (A^\prime A) )}{ \frac{\tilde{d}\alpha }{(\tilde{d}-1) s(AA^\prime )}   \Big(\sqrt{\rho\big((\beta  A^\prime A-N)^2\big)}+L\Big)^2+\frac{\beta }{T}}.$
We next show that for suitable choices of $\alpha$ and $\beta$, the upper bounds on $\delta_C$ are both positive. For $0<\beta <\frac{2m}{\rho(A^\prime A)}$ and $0<\eta_4<2m-\beta \rho(A^\prime A)$, the first upper bound on $\delta_C$ is positive. In order for the second upper bound for $\delta_C$ to be positive we need
$\frac{\big(1-\alpha \rho(B)\big)^T}{\sum_{t=0}^{T-1}\big(1-\alpha \rho(B)\big)^t}-\frac{\alpha  L^2}{\eta_4}>0. $
Since $1-\alpha \rho(B)\neq 1$, we have $\sum_{t=0}^{T-1}\big(1-\alpha \rho(B)\big)^t=\frac{1-\big(1-\alpha \rho(B)\big)^T}{1-\big(1-\alpha \rho(B)\big)}=\frac{1-\big(1-\alpha \rho(B)\big)^T}{\alpha \rho(B)}.$ Therefore $\frac{\big(1-\alpha \rho(B)\big)^T}{1-\big(1-\alpha \rho(B)\big)^T}\alpha \rho(B)-\alpha \frac{L^2}{\eta_4}>0,$ which holds true for $0<\alpha <\frac{1-\left(\frac{L^2}{L^2+\eta_4\rho(B)}\right)^{1/T}}{\rho(B)}$.

Hence, the parameter set is nonempty and thus we can find $\delta_C>0$ which establishes linear rate of convergence. 
\end{proof}	
\ff{\begin{remark}\label{Bchoice-C} If we choose $B=\beta A^\prime A$, from the analysis of the FlexPD-F and FlexPD-C algorithms we can see that $\beta$ can be unbounded. For FlexPD-G algorithm, this choice of matrix $B$ together with \cref{ass:Bbound} impose new upper bounds on $\beta$. 
\end{remark}}
\begin{remark}\label{Tchoice}
To find an optimal value for the number of primal updates per iteration, $T$, leading to the best convergence rate for the three algorithms in our framework, we can optimize over various parameters in the analysis. Due to the complicated form of stepsize bounds and the linear rate constants, a general result is quite messy specially for FlexPD-F and FlexPD-G algorithms. However, for FlexPD-C algorithm we can show that the upper bound on $T\alpha$ [c.f. \cref{thm:linconv-C}] is increasing in $T$ and approaches $-\ln{\frac{L^2}{L^2+\eta\rho(B)}}$ for large values of $T$. This suggests that the improvement of convergence speed from increasing $T$ diminishes for large $T$ in FlexPD-C algorithm. In our numerical studies (Section \ref{sec:simul}, we found $T=2, 3$ often gives the best balance for performance and computation/communication costs tradeoff. 
\end{remark}

\begin{remark} The stepsize parameters in our algorithms are common among all agents and computing them requires global variables across the network. These global variables can be obtained by applying a consensus algorithm before the main algorithm \cite{shi2015extra,wu2018decentralized, mokhtari2015network}.
\end{remark}
\begin{figure}
  \centering
  \subfloat{\includegraphics[width=0.33\textwidth]{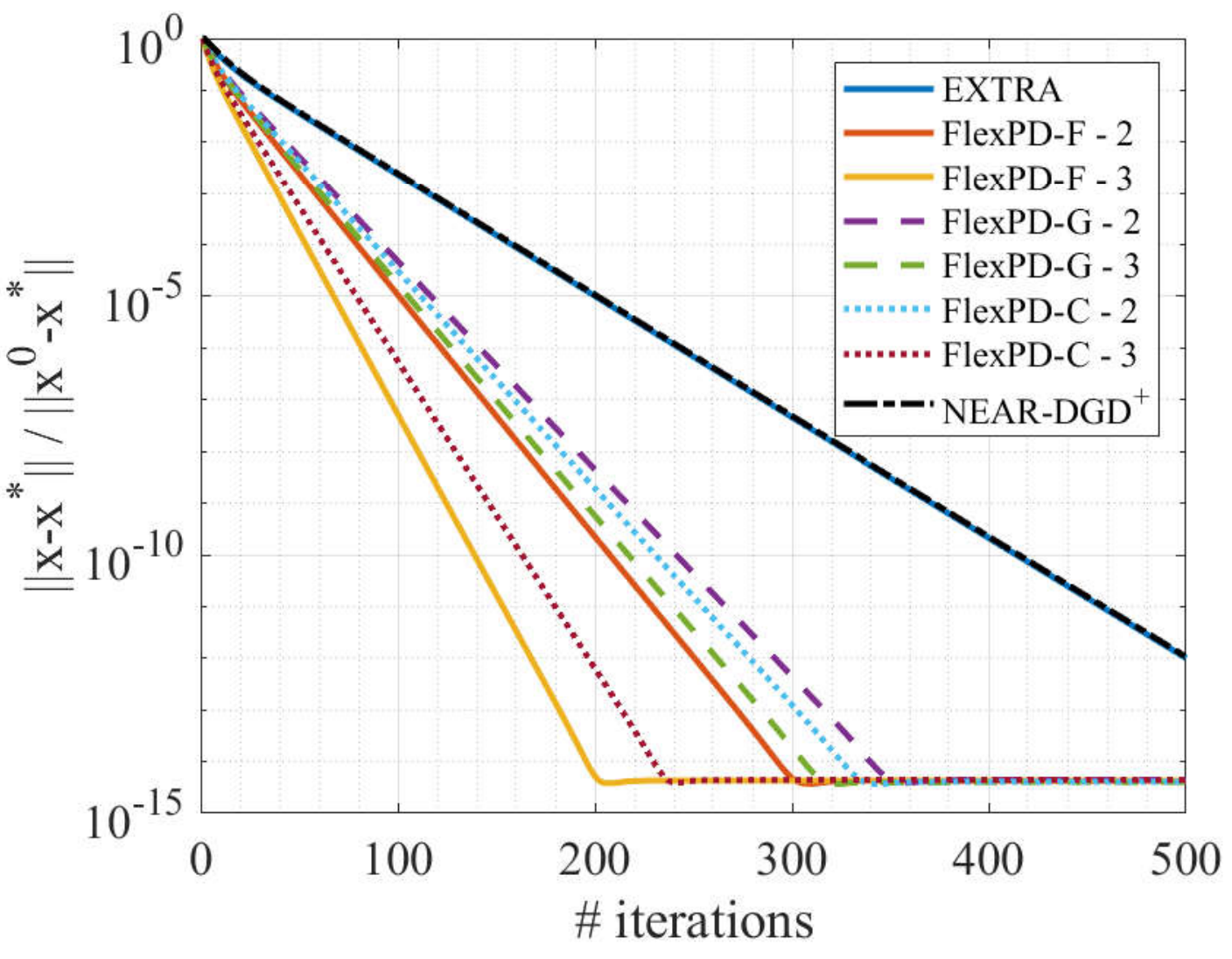}}
  \hfill
  \subfloat{\includegraphics[width=0.33\textwidth]{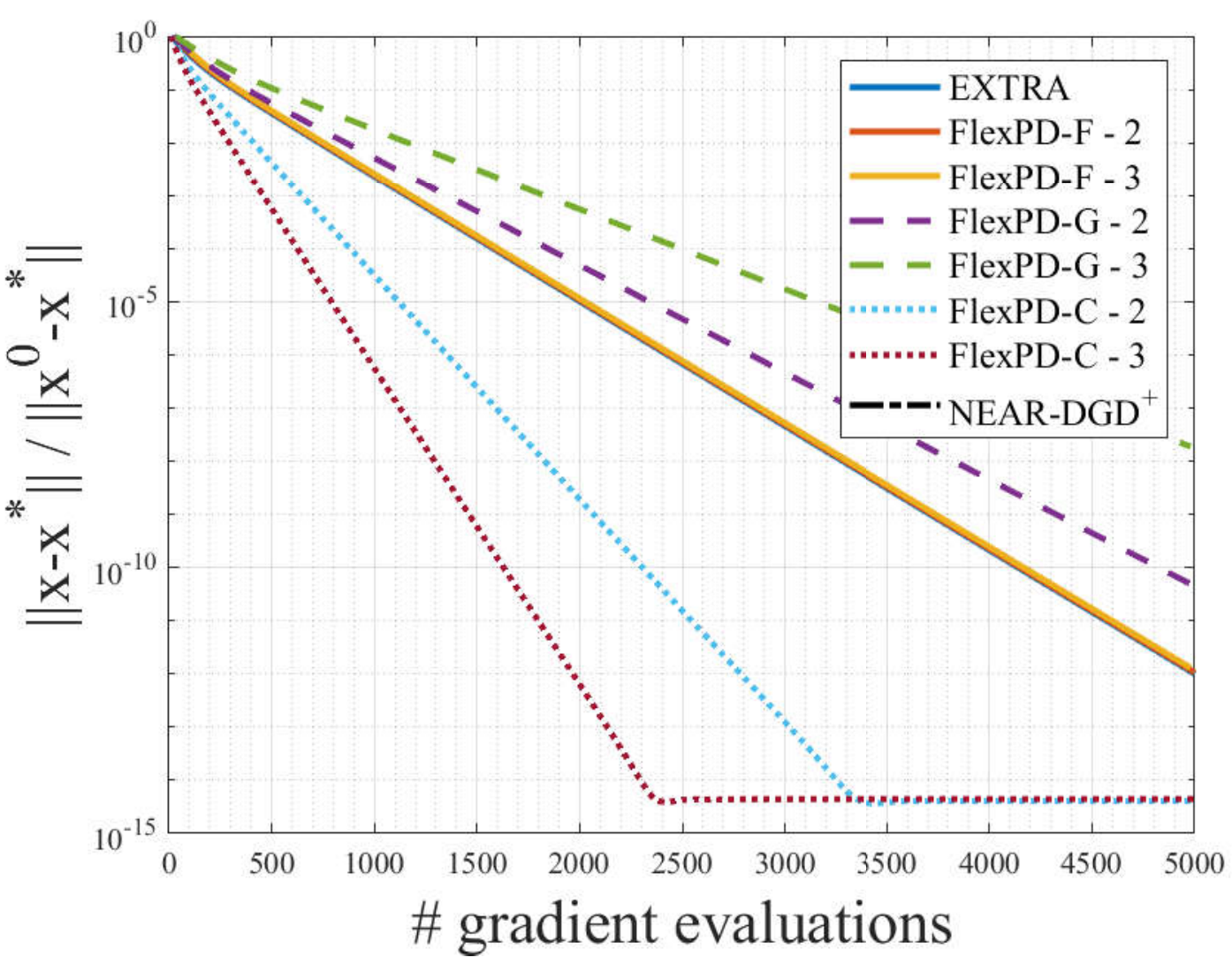}}
  \hfill
  \subfloat{\includegraphics[width=0.33\textwidth]{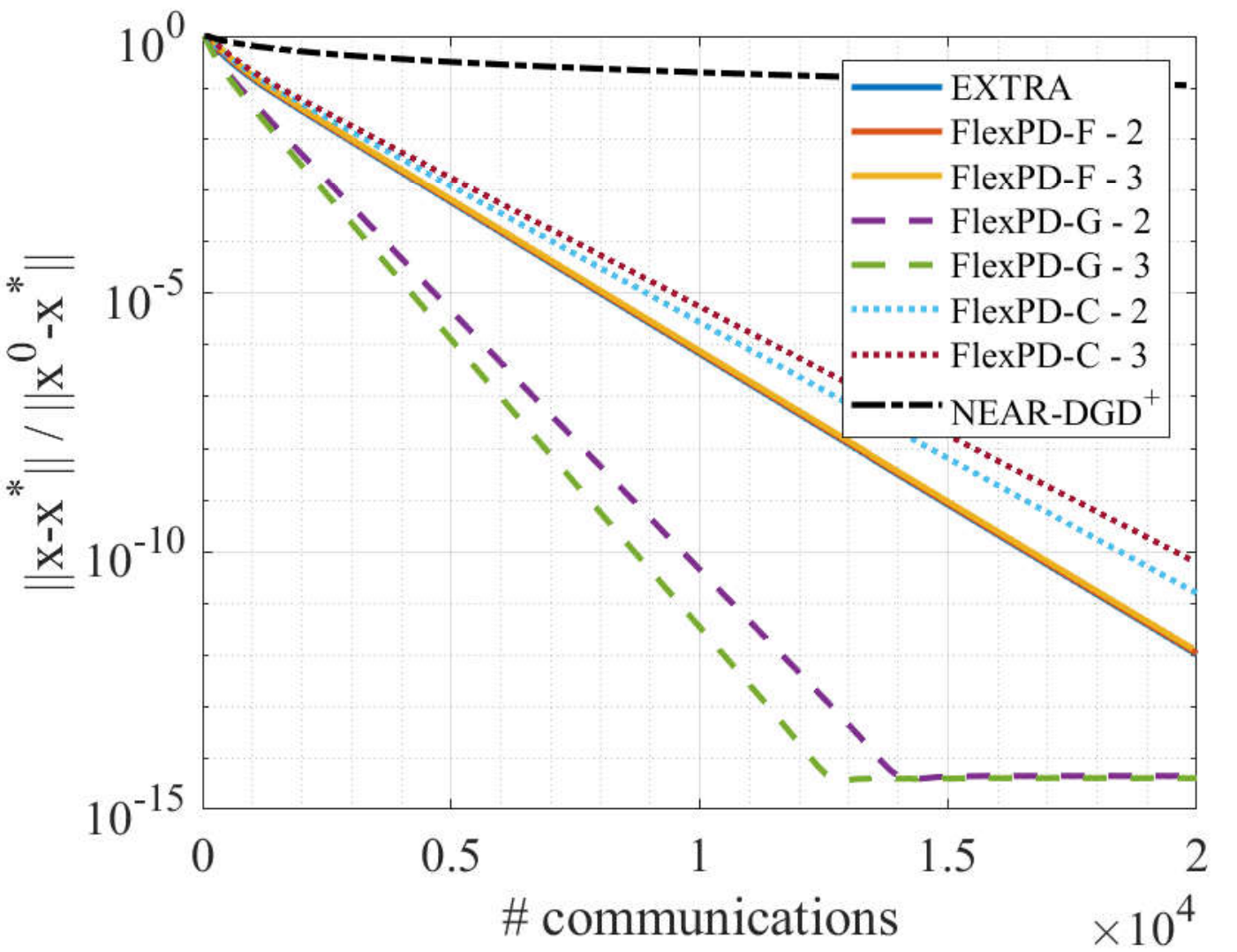}}
  \caption{Performance of FlexPD algorithms with 2 and 3 primal updates per iteration, NEAR-DGD$^+$, and EXTRA in terms of the relative error.} 
  \vskip -0.2in
     \label{fig:logreg}
\end{figure}
\section{Numerical Experiments}\label{sec:simul}
In this section, we present some numerical experiments, where we compare the performance of our proposed algorithms with other first-order methods. \ff{We also study the performance of our framework on networks with different sizes and topologies. In these experiments we simulate FlexPD-C with its theoretical stepsize, due to its explicit form of stepsize bounds.} In all experiments, we set $B=\beta A^\prime A$ for our algorithm. 
\par To compare the performance of our proposed algorithms with other first-order methods, we consider solving a binary classification problem by using regularized logistic regression. We consider a setting where $K$ training samples  are uniformly distributed over $n$ agents in a network with $4-$regular graph, in which agents first form a ring and then each agent gets connected to two other \ew{random} neighbors (one from each side). Each agent $i$ has access to one batch of data with  $k_i=\frac{K}{n}$ samples. This problem can be formulated as
\[\min_x f(x)=\frac{\kappa}{2}\norm{x}^2+\frac{1}{K}\sum_{i=1}^n\sum_{j=1}^{k_i}\log\big[1+\exp(-v_{ij}u_{ij}^\prime x)\big],\]
where $u_{ij}$ and $v_{ij}$, $j\in\{ 1, 2, ..., k_i\}$ are the feature vector and the label for the data point $j$ associated with agent $i$ and the regularizer term $\frac{\kappa}{2}\big\Vert x\big\Vert^2$ is added to avoid overfitting. We can write this objective function in the form of $f(x)=\sum_{i=1}^n f_i(x)$, where $f_i(x)$ is defined as 
$f_i(x)=\frac{\kappa}{2n}\norm{x}^2+\frac{1}{K}\sum_{j=1}^{k_i}\log\big[1+\exp(-v_{ij}u_{ij}^\prime x)\big].$
In our simulations, we use the \textit{diabetes-scale} dataset \cite{chang2011libsvm},  with $768$ data points, distributed uniformly over $10$ agents. Each data point has a feature vector of size $8$ and a label which is either $1$ or $-1$. \fm{In \cref{fig:logreg} we compare the performance of our primal-dual algorithms in \cref{alg:FlexPD-F}, \cref{alg:FlexPD-G}, and \cref{alg:FlexPD-C}, with $T=2, 3$ with two other first-order methods with exact convergence: EXTRA algorithm \cite{shi2015extra}, and NEAR-DGD$^+$ algorithm \cite{berahas2018balancing}, in terms of relative error, $\frac{\norm{x^k-x^*}}{\norm{x^0-x^*}}$, with respect to number of iterations, total number of gradient evaluations, and total number of communications.} To compute the benchmark $x^*$ we used \textit{minFunc} software \cite{schmidt2005minfunc} and the stepsize parameters are tuned for each algorithm using random search. We can see that increasing the number of primal updates improves the performance of the algorithms while incurring a  higher computation or communication cost. \ff{In our experiments, we observe that the performance of FlexPD-F approaches to the method of multipliers by increasing $T$ and carefully choosing stepsizes. This improvement, however, is less in FlexPD-C and FlexPD-G algorithms, due to the effect of the outdated gradients and old information from neighbors.} \fm{EXTRA algorithm is a special case of our framework for specific choices of matrices $A$ and $B$ and one primal update per iteration.} In the NEAR-DGD$^{+}$ the number of communication rounds increases linearly with the iteration number, which explains its slow rate of convergence with respect to the number of communications. We obtained similar results for other standard machine learning datasets, including \textit{mushroom}, \textit{heart-scale}, \textit{a1a}, \textit{australian-scale}, and \textit{german} \cite{chang2011libsvm}. 
\begin{figure} 
  \centering
  \subfloat[]{\includegraphics[width=0.24\textwidth]{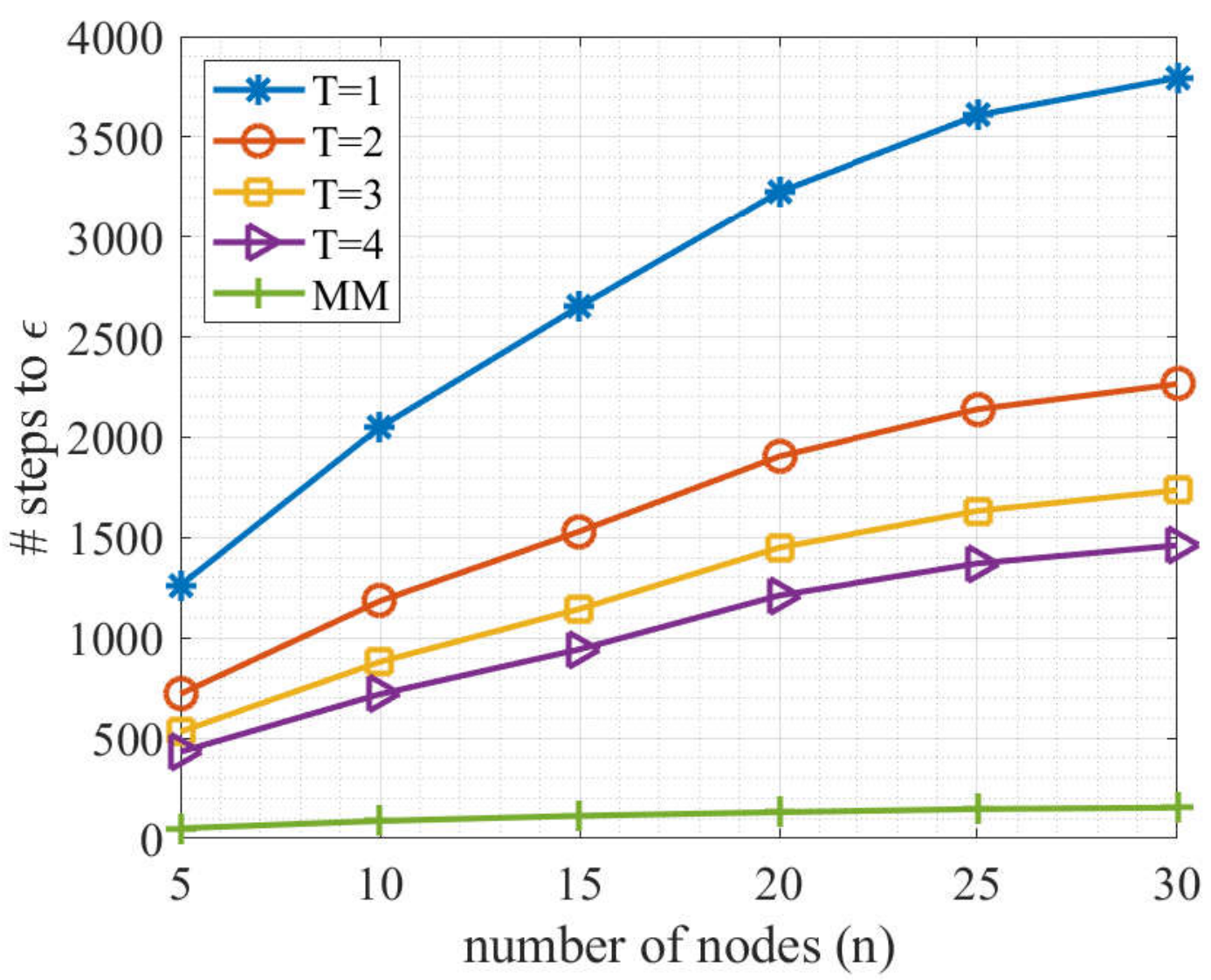}\includegraphics[width=0.24\textwidth]{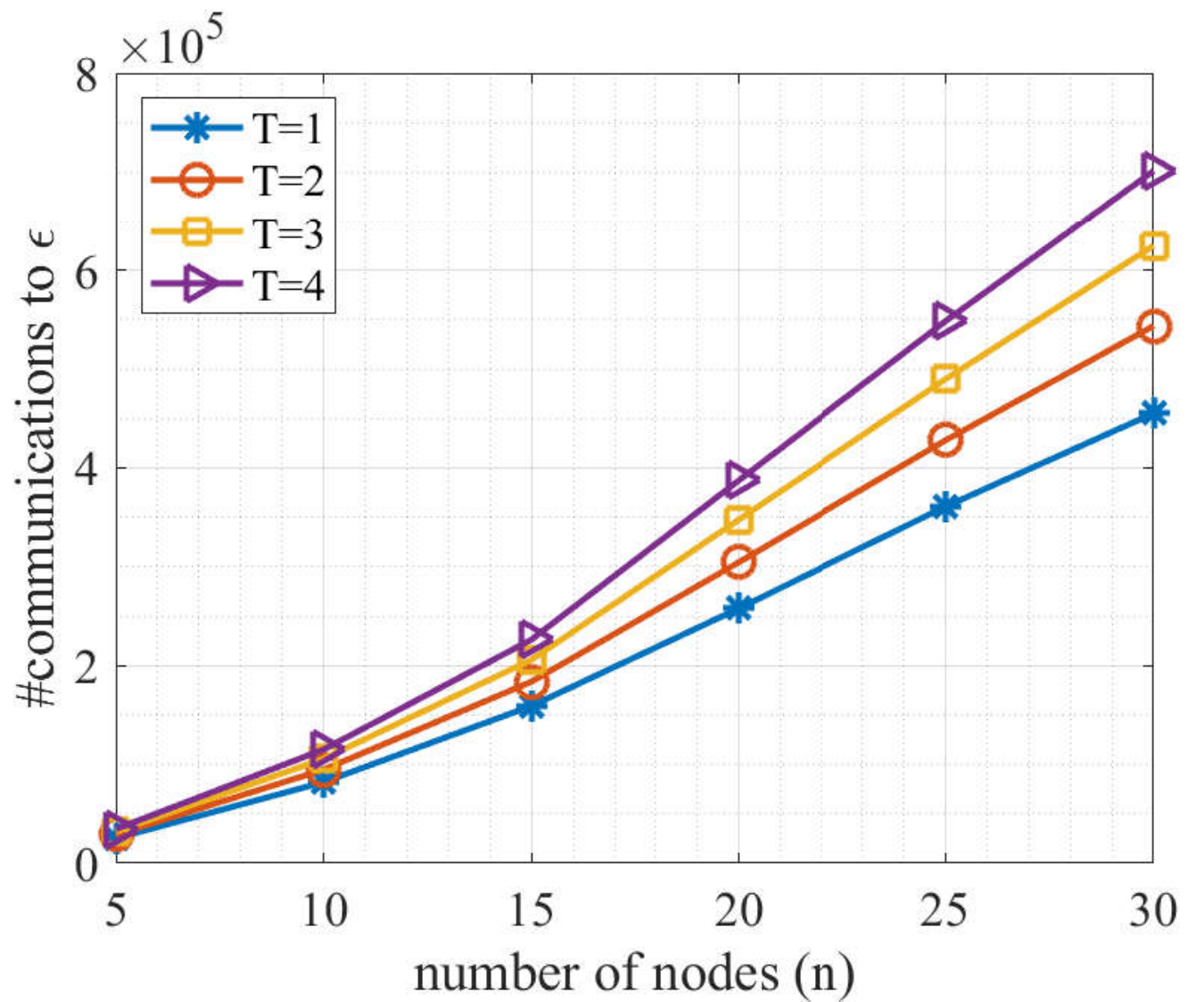}}
  \hfill
  \subfloat[]{\includegraphics[width=0.24\textwidth]{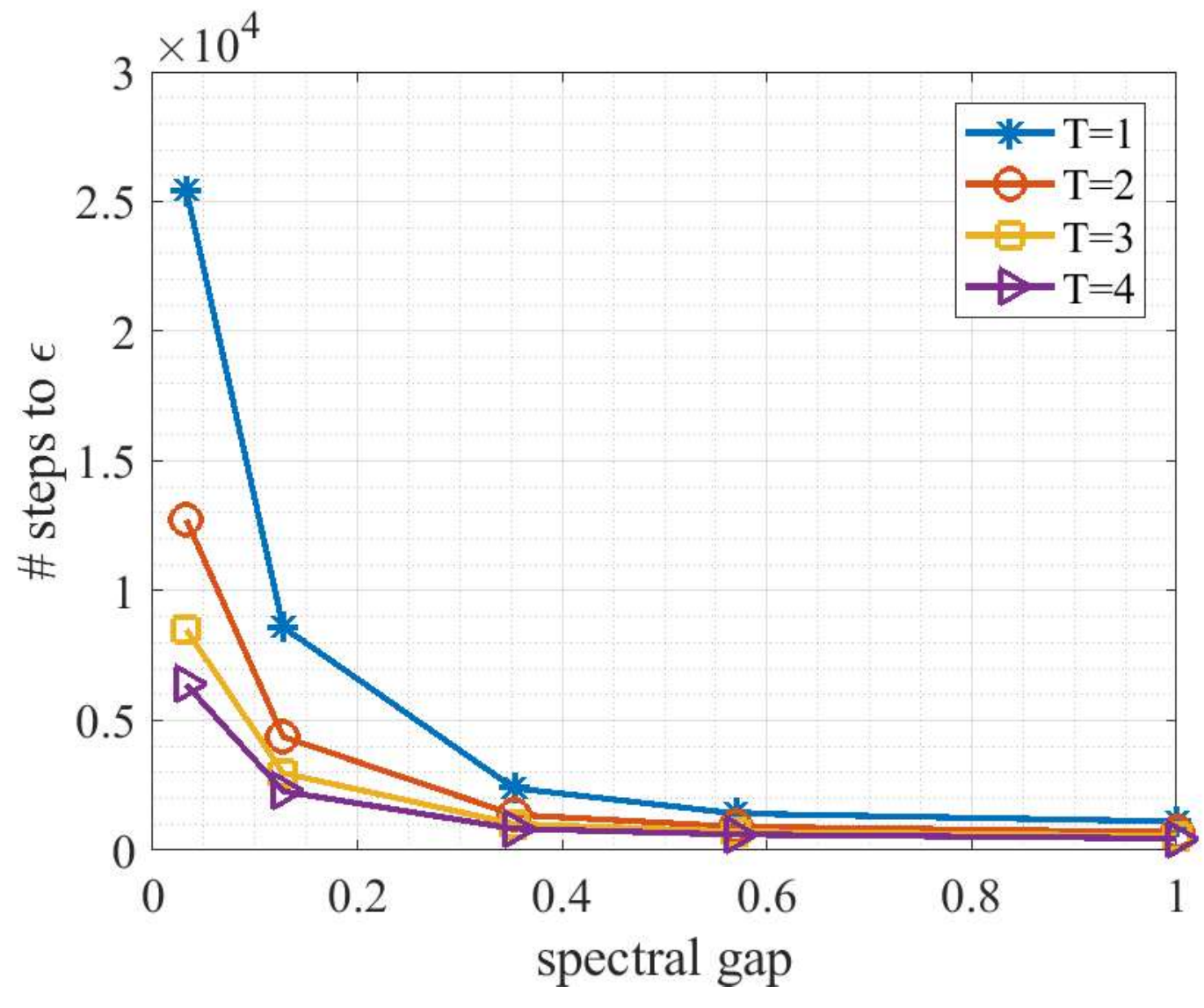}\includegraphics[width=0.24\textwidth]{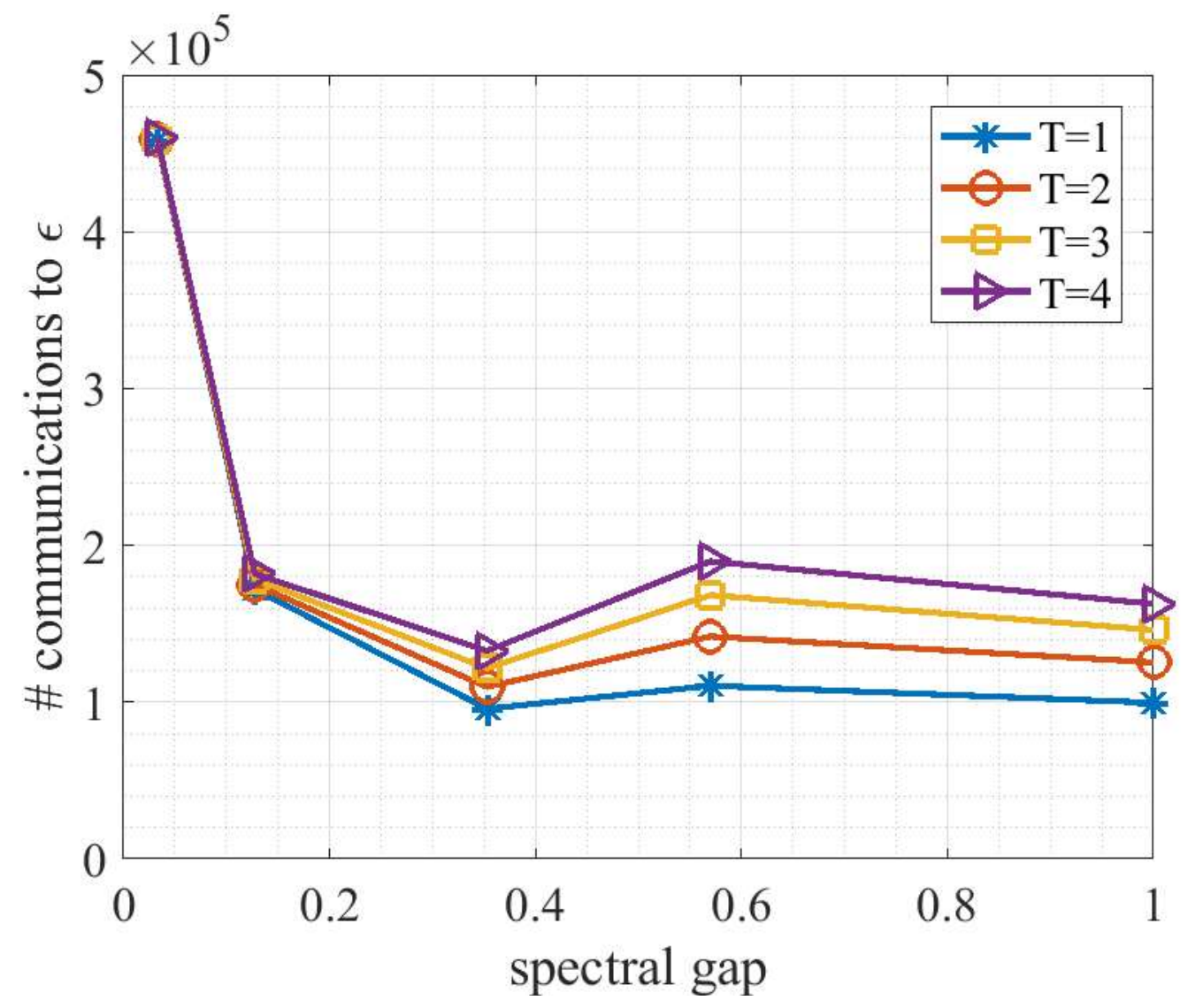}}
  \caption{Performance of FlexPD-C algorithm with T=1,...,4 on different graphs.} 
  \vskip -0.2in
    \label{fig:radii}
\end{figure}
\par To study the performance of FlexPD-C on networks with different sizes, we consider $5-30$ agents, which are connected with random $4-$regular graphs. The objective function at each agent $i$ is $f_i(x)=c_i(x_i-b_i)^2$ with $c_i$ and $b_i$ being random integers chosen from $[1,10^3]$ and $[1,100]$. We simulate the algorithm for $1000$ random seeds and we plot the average number of steps until the relative error is less than $\epsilon=0.01$, i.e., $\frac{\norm{x^k-x^*}}{\norm{x^0-x^*}}<0.01$ in part (a) of \cref{fig:radii}. The centralized implementation of the method of multipliers is also included as a benchmark. The primal stepsize parameter $\alpha$ at each seed is chosen based on the theoretical bound given in \cref{thm:linconv-C} and the dual stepsize is $\beta=T$.  \ew{We observe that as the network size grows, the number of steps to optimality of our proposed method grows sublinearly and the number of communications grows almost linearly.} 

\par To study the performance of FlexPD-C in networks with different topologies, we consider solving a quadratic optimization problem in networks with $10$ agents and different graphs. For each graph with Laplacian matrix $\mathcal{L}$, we define the consensus matrix $W=I-\frac{1}{1+d_{max}}\mathcal{L}$ with $d_{max}$ being the largest degree of agents. The spectral gap of a graph is the difference between the two largest eigenvalues of its consensus matrix and reflects the connectivity of the agents. We simulate FlexPD-C algorithm and use its theoretical bounds for stepsize. The objective function at each agent $i$ is of the form $f_i(x)=c_i(x_i-b_i)^2$ with $c_i$ and $b_i$ being integers that are randomly chosen from $[1,10^3]$ and $[1,100]$. We run the simulation for $1000$ random seeds. On the $Y$-axis of part (b) of \cref{fig:radii}, we plot the average number of steps and communications until the relative error is less than $\epsilon=0.01$, i.e., $\frac{\norm{x^k-x^*}}{\norm{x^0-x^*}}<0.01$, and on the $X$-axis, from left to right, we have the spectral gaps of path, ring, 4-regular, random Erdos-Renyi (p=.9178), and complete graphs. As we observe in part (b) of \cref{fig:radii}, increasing the number of primal steps per iteration in poorly connected graphs improves the performance more significantly. Also, we notice that with respect to the number of communications 4-regular graph has the best performance.

\ff{\section{Concluding Remarks}\label{sec:conclusions}
In this paper, we propose a flexible framework of first-order primal-dual optimization algorithms for distributed optimization.
Our framework includes three classes of algorithms, which allow for multiple primal updates per iteration and are different in terms of computation and communication requirements. The design flexibility of the proposed framework can be used to control the trade-off between the execution complexity and the performance of the algorithm. We show that the proposed algorithms converge to the exact solution with a global linear rate. The use of this framework is not restricted to the distributed settings and it can be used to solve general equality constrained optimization problems satisfying certain assumptions. The numerical experiments show the convergence speed improvement of primal-dual algorithms with multiple primal updates per iteration compared to other known first-order methods like EXTRA and NEAR-DGD$^+$. Possible future work includes the extension of this framework to non-convex and asynchronous settings.}
\section*{Acknowledgments}

\bibliographystyle{plain}
\bibliography{CDCRef}
\end{document}